\documentclass[11pt]{article}

\usepackage{amsmath,amsthm}

\usepackage{amssymb,latexsym}

\usepackage{enumerate}


\topmargin -0.85cm \oddsidemargin 0.75cm \evensidemargin 1.25cm
\textwidth 14.50cm \textheight 23.00cm

\newcommand{\BB}{{\cal B}}
\newcommand{\DD}{{\cal D}}
\newcommand{\EE}{{\cal E}}
\newcommand{\FF}{{\cal F}}

\newcommand{\HH}{{\cal H}}
\newcommand{\LL}{{\cal L}}
\newcommand{\MM}{{\cal M}}

\newcommand{\RR}{{\cal R}}
\newcommand{\TT}{{\cal T}}
\newcommand{\VV}{{\cal V}}
\newcommand{\WW}{{\cal W}}

\newcommand{\BM}{{\mathbb M}}
\newcommand{\BN}{{\mathbb N}}
\newcommand{\BR}{{\mathbb R}}

\newcommand{\BBM}{{\mathbf{M}}}

\newcommand{\BBR}{{\mathbf R}}

\newcommand{\BBX}{{\mathbf{X}}}

\newcommand{\fch}{{\mathbf{1}}}

\newtheorem{theorem}{\bf Theorem}[section]
\newtheorem{proposition}[theorem]{\bf Proposition}
\newtheorem{lemma}[theorem]{\bf Lemma}
\newtheorem{corollary}[theorem]{\bf Corollary}

\theoremstyle{definition}
\newtheorem*{definition}{Definition}

\newtheorem{remark}[theorem]{\bf Remark}

\numberwithin{equation}{section}

\begin{document}

\title{Large time behaviour of solutions to parabolic equations
with Dirichlet operators and nonlinear dependence on measure data}
\author{Tomasz Klimsiak and Andrzej Rozkosz}
\date{}
\maketitle
\begin{abstract}
We study large time behaviour of renormalized solutions of the
Cauchy problem for equations of the form $\partial_tu-L u+\lambda
u=f(x,u)+g(x,u)\cdot\mu$, where $L$ is the operator associated
with a regular lower bounded semi-Dirichlet form $\EE$ and $\mu$
is a nonnegative bounded smooth measure with respect to the
capacity determined by $\EE$. We show that under the monotonicity
and some integrability assumptions on $f,g$ as well as some
assumptions on the form $\EE$, $u(t,x)\rightarrow v(x)$ as
$t\rightarrow\infty$ for quasi-every $x$, where $v$ is a solution
of some elliptic equation associated with our parabolic equation.
We also provide the rate convergence. Some examples illustrating
the utility of our general results are given.
\end{abstract}
{\bf Mathematics Subject Classification (2010):} Primary: 35B40,
35K58; Secondary: 60H30.
\smallskip\\
{\bf Keywords:} Semilinear equation; Dirichlet operator, Mesure
data, Large time behaviour of solutions,  Rate of convergence,
Backward stochastic differential equation.

\footnote{T. Klimsiak: Institute of
Mathematics, Polish Academy of Sciences, \'Sniadeckich 8, 00-956
Warszawa, Poland, and Faculty of Mathematics and Computer Science,
Nicolaus Copernicus University, Chopina 12/18, 87-100 Toru\'n,
Poland. E-mail: tomas@mat.umk.pl}


\footnote{A. Rozkosz: Faculty of Mathematics and Computer Science,
Nicolaus Copernicus University, Chopina 12/18, 87-100 Toru\'n,
Poland. E-mail: rozkosz@mat.umk.pl}

\section{Introduction} \label{sec1}

Let $E$ be a locally compact separable metric space,  $m$ an
everywhere dense Borel measure on $E$ and let $L$ be the operator
associated with  a regular lower bounded semi-Dirichlet form
$(B,V)$ on $L^2(E;m)$. The main purpose of the paper is to study
large time behaviour of solutions of the Cauchy problem
\begin{equation}
\label{eq1.1} \left\{\begin{array}{l}
\partial_tu-L u+\lambda u=f(x,u)+g(x,u)\cdot\mu
\quad\mbox{in }(0,\infty)\times E,
\medskip\\
u(0,\cdot)=\varphi\quad\mbox{on }E.
\end{array}
\right.
\end{equation}
In (\ref{eq1.1}), $\varphi:E\rightarrow\BR$,
$f,g:E\times\BR\rightarrow\BR$ are Borel measurable functions, $\mu$ is
a smooth measure with respect to the parabolic capacity determined by
$(B,V)$.

The class of operators corresponding to regular lower bounded
Dirichlet forms is quite large. It contains both local operators
whose model example is the Laplace operator $\Delta$ or Laplace
operator perturbed by the first order operator, as well as
nonlocal operators whose model example is the $\alpha$-Laplace
operator $\Delta^{\alpha/2}$ with $\alpha\in(0,2)$ or
$\alpha$-Laplace operator with variable exponent $\alpha$
satisfying some regularity conditions. Many interesting examples
of operators associated with regular semi-Dirichlet forms are to
be found in \cite{FOT,KR:JFA,KR:CM,MR,O2}. In fact, our methods
also allow to treat equations with operators associated with
quasi-regular forms (see remarks at the end of Section
\ref{sec5}).

As for the data $\varphi,f,g$, we assume that $\varphi\in
L^1(E;m)$, $f,g$ are continuous and monotone in the second
variable $u$ and satisfy mild integrability conditions.   Our
basic assumption on $\mu$ is that it is a smooth measure (with
respect to the capacity associated with $(B,V)$) of class
$\RR^+(E)$, i.e. a positive smooth measure such that
$E_xA^{\mu}_{\infty}<\infty$ for quasi-every (q.e. for short)
$x\in E$, where $A^{\mu}$ is the additive functional of the Hunt
process associated with $(B,V)$ in the Revuz correspondence with
$\mu$. Equivalently, our condition imposed on $\mu$ means that the
potential (associated with $(B,V)$) of $\mu$ is $m$-a.e. finite.
It is known that if  $(B,V)$ is a non-symmetric form, and moreover,
it is transient or  $\lambda>0$,
then $\RR^+(E)$ contains the class $\MM^+_{0,b}(E)$ of positive
bounded smooth measures on $E$ (see Section \ref{sec2}). In general, the inclusion
$\MM^+_{0,b}(E)\subset\RR^+(E)$ is strict (see Section \ref{sec2}). Elliptic
equations with unbounded measures of class $\RR^+(E)$ are considered for instance
in the monograph \cite{MV}; see also Section \ref{sec6}.

Let $v$ be a solution of the elliptic equation
\begin{equation}
\label{eq1.2} -Lv+\lambda v=f(x,v)+g(x,v)\cdot\mu\quad \mbox{in }E.
\end{equation}
Our main result says that under the assumptions on $\varphi,f,g$
mentioned before and some additional mild assumptions on the
semigroup $(P_t)$ and the resolvent $(R_{\alpha})$ associated with
$(B,V)$,
\begin{equation}
\label{eq1.3} \lim_{t\rightarrow\infty}u(t,x)=v(x)
\end{equation}
for q.e. $x\in E$. We also estimate the rate of convergence. Our
main estimate says that for every $q\in(0,1)$ there is $C(q)>0$
such that for  q.e. $x\in E$,
\begin{equation}
\label{eq1.4} |u(t,x)-v(x)|\le
3P_t|\varphi|(x)+3P_t(R_0(|f(\cdot,0)|
+|g(\cdot,0)|\cdot\tilde\mu))(x),\quad t>0.
\end{equation}
The quantities on the right hand-side of (\ref{eq1.4}) can be
estimated for concrete operators $L$. We give some examples in
Section \ref{sec6}.

To our knowledge, in case $L$ is a nonlocal operator, our results
(\ref{eq1.3}), (\ref{eq1.4}) are entirely new. In case $L$ is
local, we generalize the results obtained in  the paper
\cite{K:JEE} in which  $g\equiv1$ and $L$ is  a uniformly elliptic
divergence form operator. Note, however, that in \cite{K:JEE}
systems of equations are treated. We also strengthen slightly the
results of \cite{LP} concerning asymptotic behaviour of nonnegative
solutions
of equations involving Laplace operator $\Delta$ and absorbing
term of the form $h(u)|\nabla u|^2$ with $h$ satisfying the ``sign
condition". Some other results on asymptotic behaviour, which are
not covered by our approach, are to be found in
\cite{Pe1,Pe2,Pe3}. In \cite{Pe2,Pe3} equations involving
Leray-Lions type operators and smooth measure data are considered
while \cite{Pe1} deals with  linear equations with general,
possibly singular, bounded measure  $\mu$. Note that the methods
used in \cite{Pe1,Pe2,Pe3} do not provide estimates between the
parabolic solution and the corresponding stationary solution.

In order to prove (\ref{eq1.3}) and (\ref{eq1.4}), we develop the
probabilistic approach initiated in \cite{K:JEE}. We find
interesting that it provides a unified way of treating a wide
variety of seemingly disparate examples (see Section \ref{sec6}).

Although in the paper we deal mainly with the asymptotic behaviour
for solutions of (\ref{eq1.1}), the first question we treat is the
existence and  uniqueness of solutions of problems (\ref{eq1.1})
and (\ref{eq1.2}). Here our results are also new, but our proofs
rely  on our earlier results proved in \cite{KR:JFA,KR:CM} in case
$g\equiv1$. In fact, in the parabolic case we prove the existence
and uniqueness of solutions to problems involving operators $L_t$
and data $f,g,\mu$ depending on time, i.e. more general then
problem (\ref{eq1.1}). Finally, let us note that in the paper we
consider probabilistic solutions of (\ref{eq1.2}) and
(\ref{eq1.3}) (see Section \ref{sec3} for the definitions). It is
worth pointing out, however, that in the case where $(B^{(t)},V)$
are (non-symmetric) Dirichlet forms, the probabilistic solutions
coincide with the renormalized solutions defined in \cite{KR:NoD}
(in the elliptic case under the additional assumption that $(B,V)$
satisfies the strong sector condition and either $(B,V)$ is
transient or $\lambda>0$). For local operators these renormalized
solutions coincide with the usual renormalized solutions (see
\cite{DMOP,PPP} and also \cite{KR:JEE}).

\section{Preliminaries} \label{sec2}

In the paper $E$ is a locally compact separable metric space,
$E^1=\BR\times E$, $m$ is an everywhere dense Borel measure on $E$
and $m_1=dt\otimes m$ . For $T>0$ we write $E_T=[0,T]\times E$,
$E_{0,T}=(0,T]\times E$. By $\BB_b(E)$ we denote the set of all
real bounded Borel measurable functions on $E$ and by $\BB^+_b(E)$
we denote the subset of $\BB_b(E)$ consisting of all nonnegative
functions. The sets $\BB_b(E^1)$, $\BB^+_b(E^1)$ are defined
analogously.

\subsection{Dirichlet forms}

Let $H=L^2(E;m)$ and let  $(\cdot,\cdot)$ denote the usual inner
product in $H$. We assume that we are given a family
$\{B^{(t)},t\in[0,T]\}$ of regular semi-Dirichlet forms on $H$
with common domain $V\subset H$ (see \cite[Section 1.1]{O2}). We
assume that the forms $B^{(t)}$ are lower bounded and satisfy the
sector condition with constants $\alpha_0\ge0$, $K\ge1$
independent of $t\in[0,T]$. Let us recall that this means that
\[
B^{(t)}_{\alpha_0}(\varphi,\varphi)\ge0,\quad \varphi\in V,
\]
where $B^{(t)}_{\lambda}(\varphi,\psi)=B^{(t)}(\varphi,\psi)
+\lambda(\varphi,\psi)$ for $\lambda\ge0$, and that
\[
|B^{(t)}_{\alpha_0}(\varphi,\psi)|\le
KB^{(t)}_{\alpha_0}(\varphi,\varphi)^{1/2}
B^{(t)}_{\alpha_0}(\psi,\psi)^{1/2},\quad \varphi,\psi\in V
\]
for all $t\in[0,T]$. Without loss of generality, we assume $\alpha_0<1$.
We also assume that $[0,T]\ni t\mapsto
B^{(t)}(\varphi,\psi)$ is Borel measurable for every
$\varphi,\psi\in V$ and there is $c\ge1$ such that
\begin{equation}
\label{eq2.01} c^{-1}B_{\alpha_0}(\varphi,\varphi)\le
B_{\alpha_0}^{(t)}(\varphi,\varphi)\le c B_{\alpha_0}(\varphi,\varphi),\quad
t\in[0,T],\,\varphi\in V,
\end{equation}
where $B(\varphi,\varphi)=B^{(0)}(\varphi,\varphi)$. By putting
$B^{(t)}=B$ for $t\notin[0,T]$, we may and will assume that
$B^{(t)}$ is defined and satisfies (\ref{eq2.01}) for all
$t\in\BR$. As usual, we denote  by $\tilde B^{(t)}$ the symmetric
part of $B^{(t)}$, i.e. $\tilde
B^{(t)}(\varphi,\psi)=\frac12(B^{(t)}(\varphi,\psi)
+B^{(t)}(\psi,\varphi))$.

Note that by the assumption, $V$ is a dense subspace of $H$ and the
form $(B,V)$ is closed, i.e. $V$ is a real Hilbert space with
respect to $\tilde B_1(\cdot,\cdot)$, which is densely and
continuously embedded in $H$. We denote by $\|\cdot\|_V$ the norm
in $V$, i.e. $\|\varphi\|^2_V=B_1(\varphi,\varphi)$, $\varphi\in
V$. We denote by $V'$ the dual space of $V$, and by
$\|\cdot\|_{V'}$  the corresponding norm. We set $\HH=L^2(\BR;H)$,
$\VV=L^2(\BR;V)$, $\VV'=L^2(\BR;V')$ and
\begin{equation}
\label{eq2.3} \|u\|^2_{\VV}=\int_{\BR}\|u(t)\|^2_V\,dt,\qquad
\|u\|^2_{\VV'}=\int_{\BR}\|u(t)\|^2_{V'}\,dt.
\end{equation}
We shall identify  $H$ and its dual $H'$. Then $V\subset H\simeq
H'\subset V'$ continuously and densely, and hence
$\VV\subset\HH\simeq\HH'\subset\VV'$ continuously and densely.

For $u\in\VV$, we denote by $\frac{\partial u}{\partial t}$ the
derivative in the distribution  sense of the function $t\mapsto
u(t)\in V$, and we set
\begin{equation}
\label{eq2.4} \WW=\left\{u\in \VV:\frac{\partial u}{\partial t}\in
\VV'\right\}, \qquad \|u\|_{\WW}=\|u\|_{\VV}+\left\|\frac{\partial u}{\partial
t}\right\|_{\VV'}\,.
\end{equation}

We denote by  $\EE$  the time dependent Dirichlet  form associated
with the family $\{(B^{(t)},V),t\in\BR\}$, that is
\begin{equation}
\label{eq2.23} \mathcal{E}(u,v)=\left\{
\begin{array}{l}\langle-\frac{\partial u}{\partial t},v\rangle+\BB(u,v),
\quad u\in\WW,v\in\VV,\smallskip \\
\langle\frac{\partial v}{\partial t}, u\rangle+\BB(u,v),\quad
u\in\VV,v\in\WW,
\end{array}
\right.
\end{equation}
where $\langle\cdot,\cdot\rangle$ is the duality pairing between
$\VV'$ and $\VV$, and
\begin{equation}
\label{eq2.24} \BB(u,v)=\int_{\BR}B^{(t)}(u(t),v(t))\,dt.
\end{equation}
Note that $\EE$ can be identified with some generalized Dirichlet
form  (see \cite[Example I.4.9(iii)]{S}).

Given a time dependent form (\ref{eq2.23}), we define quasi notions
with respect to $\EE$ (exceptional sets, nests, quasi-continuity
as in \cite[Section 6.2]{O2}. Note that by \cite[Theorem
6.2.11]{O2} each element $u$ of $\WW$ has a quasi-continuous
$m_1$-version. We will denote it by $\tilde u$. Quasi-notions with
respect to $(B,V)$ are defined as in \cite[Section 2.2]{O2}.

We denote by $S(E)$ the set of all smooth measures on $E$ with
respect to the form $(B,V)$ (see, e.g., \cite[Section 4.1]{O2} for
the definition).  $S(E^1)$ is the set of all smooth
measures on $E^1$ with respect to $\EE$ (see \cite{K:JFA}), and
$S(E_{0,T})$ is the set of all smooth measures on $E^1$ with support
in $E_{0,T}$. We denote by  $\MM_b(E_{0,T})$ the set of all signed
Borel measures on $E^1$ with support in $E_{0,T}$ such that
$|\mu|(E^1)<\infty$, where $|\mu|$ stand for the total variation
of  $\mu$. $\MM_{0,b}(E_{0,T})$ (resp. $\MM^{+}_{0,b}(E_{0,T})$) is the subset of $\MM_b(E_{0,T})$ consisting of all smooth (resp. smooth nonnegative)
measures. Analogously we define the
classes $\MM_b(E)$, $\MM_{0,b}(E)$, $\MM^{+}_{0,b}(E)$.

We will say that a Borel measure $\mu$ on $E^1$ does not depend on
time if it is of the form
\begin{equation}
\label{eq2.6} \mu=dt\otimes\tilde\mu
\end{equation}
for some Borel measure $\tilde \mu$ on $E$. Since $\tilde\mu(B)
=\mu([0,1]\times B)$ for $B\in \BB(E)$, $\tilde\mu$ is uniquely
determined by $\mu$. From now on, given $\mu$ not depending on time,
we  denote by $\tilde\mu$ the Borel measure on $E$ determined
by (\ref{eq2.6}).

\begin{lemma}
If $\mu\in S(E_{0,T})$ does not depend on time, then
$\tilde{\mu}\in S(E)$.
\end{lemma}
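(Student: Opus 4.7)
The plan is to verify the two defining properties of smoothness for $\tilde\mu$ with respect to $(B,V)$: that $\tilde\mu$ charges no $(B,V)$-exceptional set, and that there exists a $(B,V)$-nest $\{K_n\}$ of closed subsets of $E$ with $\tilde\mu(K_n)<\infty$ for each $n$.

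For the first property I would rely on the compatibility between the two capacity theories. If $N\subset E$ is $(B,V)$-exceptional, then the cylinder $(0,T]\times N\subset E^1$ is $\EE$-exceptional. This is because the Hunt process associated with $\EE$ is the time-space process $\hat{M}=(s+t,X_t)$ built over the Hunt process $X$ associated with $(B,V)$: if $X$ almost surely avoids $N$ from $(B,V)$-q.e.\ starting point, then $\hat{M}$ almost surely avoids $(0,T]\times N$ from $\EE$-q.e.\ starting point. Since $\mu\in S(E_{0,T})$ charges no $\EE$-exceptional set, $\mu((0,T]\times N)=0$, and the product structure in (\ref{eq2.6}) yields $T\tilde\mu(N)=\mu((0,T]\times N)=0$, so $\tilde\mu(N)=0$.

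For the nest property I would take an $\EE$-nest $\{\hat F_k\}$ of closed subsets of $E_{0,T}$ with $\mu(\hat F_k)<\infty$ (available from $\mu\in S(E_{0,T})$) and extract from it a $(B,V)$-nest in $E$. By Fubini and the product structure,
\[
\mu(\hat F_k)=\int_0^T\tilde\mu\bigl((\hat F_k)_t\bigr)\,dt,
\]
so for each $k$ the $t$-section $(\hat F_k)_t$ has finite $\tilde\mu$-mass for Lebesgue-a.e.\ $t$. The idea is then to produce closed $K_k\subset E$ such that, up to an $\EE$-exceptional correction, $[0,T]\times K_k\subset \hat F_k$, read off $\tilde\mu(K_k)\le T^{-1}\mu(\hat F_k)<\infty$ from a suitable section, and use the product-type correspondence between $\EE$-capacity of $I\times K$ and $(B,V)$-capacity of $K$ to conclude that $\{K_k\}$ is a $(B,V)$-nest.

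The main obstacle will be this last passage, i.e.\ the move from an arbitrary $\EE$-nest in $E^1$ — whose members need not be products of intervals with subsets of $E$ — to a genuine $(B,V)$-nest in $E$. The product structure of $\mu$ is precisely what makes the reduction feasible, since it forces the ``non-product part'' of $\hat F_k$ to be negligible for both $\mu$ and the $\EE$-capacity in the vertical direction; but implementing this cleanly will likely require either a measurable selection on the $t$-sections or a direct capacitary translation along the lines of \cite[Sec.~6.2]{O2}, together with a diagonal extraction to combine the countably many $k$.
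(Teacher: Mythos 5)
Your split into the two defining properties of smoothness is reasonable, but as it stands the proposal is not a proof: the nest half, which you yourself flag as ``the main obstacle'', is left unresolved, and the route you sketch for it would not work as described. An $\EE$-nest $\{\hat F_k\}$ for $\mu$ has no product structure, its $t$-sections vary with $t$, and there is no reason why any single section, or any closed $K_k$ sitting inside $\hat F_k$ up to an exceptional correction, should be large in $(B,V)$-capacity --- that would require controlling the Cap of a section by the CAP of the set, i.e.\ the \emph{reverse} of the capacity comparison that is actually available. So this is a genuine gap, not a technicality to be cleaned up later. The paper avoids the issue entirely: it reduces the whole lemma to the single implication ``Cap$(A)=0$ implies CAP$([0,T]\times A)=0$'' (which yields $T\tilde\mu(A)=\mu([0,T]\times A)=0$), and its capacity comparison runs only in the easy direction, from subsets of $E$ to cylinders in $E^1$.

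For the charging condition your probabilistic route is genuinely different from the paper's and could be made to work, but it also needs repair. The lemma is stated for the general time-dependent family $\{B^{(t)}\}$ of Section \ref{sec2} --- hypothesis (\ref{eq5.12}) is imposed only later --- so the space-time process is not ``built over the Hunt process associated with $(B,V)$'': under $P_{s,x}$ the spatial part is associated with $(B^{(s)},V)$. You would have to use (\ref{eq2.01}) to identify the exceptional sets of all the forms $(B^{(s)},V)$, and then invoke the equivalence between ``CAP zero'' and polarity for the space-time process to pass from ``the bad starting points form an $m_1$-null set'' to ``CAP$([0,T]\times A)=0$''. The paper instead argues analytically and quantitatively: given Cap$(A)\le\varepsilon$, take a near-optimal $\psi_\varepsilon\in V$ with $\psi_\varepsilon\ge1$ on an open neighbourhood of $A$, multiply by a fixed temporal cutoff $f$ to get $\eta_\varepsilon=f\psi_\varepsilon\in\WW$, and bound CAP$([0,T]\times A)$ by $C(\|\partial_t\eta_\varepsilon\|^2_{L^2(0,T;H)}+\BB_\alpha(\eta_\varepsilon,\eta_\varepsilon))\le C'T\varepsilon(\|f'\|^2_\infty+\|f\|^2_\infty)$ via \cite[Eq.\ (6.2.21)]{O2}, with (\ref{eq2.01}) giving the uniformity in $t$. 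That three-line construction is what you should supply in place of the polarity argument, and it is self-contained with respect to the time-dependence of the forms.
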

\begin{proof}
Let $\alpha>\alpha_0$ and let Cap denote the capacity associated
with the form $B_{\alpha}$ defined in \cite[Definition 4 in Section 2.1]{O2},
whereas  CAP denote the capacity associated with $\EE$ defined in
\cite[(6.2.18) in Section 6.2]{O2}. It is enough to prove that for every
$A\subset E$, if Cap$(A)=0$ then CAP$([0,T]\times A)=0$. Suppose
that Cap$(A)=0$. Then by \cite[Eq. (2.1.8)]{O2}, for every
$\varepsilon>0$ there exists an open set $U_\varepsilon\subset E$
and $\psi_\varepsilon\in V$ such that $A\subset U_\varepsilon$,
$\psi_\varepsilon\ge 1$ on $U_\varepsilon$ and
\[
B_\alpha(\psi_\varepsilon,\psi_\varepsilon) \le
\mbox{Cap}(U_\varepsilon)\le\varepsilon.
\]
By the above inequality and (\ref{eq2.01}),
\begin{equation}
\label{eq2.7} B^{(t)}_\alpha(\psi_\varepsilon,\psi_\varepsilon)
\le c\varepsilon,\quad t\in \BR.
\end{equation}
Let $f$ be a continuous function on $\BR$ with compact support
such  that $f\ge 1$ on $[-T,2T]$ and let
$\eta_\varepsilon=f\psi_\varepsilon$. Then $\eta_\varepsilon \in
\WW$ and by \cite[(6.2.21)]{O2} and (\ref{eq2.7}),
\[
\mbox{CAP}([0,T]\times A)\le C\left(\left\|\frac{\partial
\eta_\varepsilon}{\partial t}\right\|^2_{L^2(0,T;H)}
+\BB_\alpha(\eta_\varepsilon,\eta_\varepsilon)\right) \le \varepsilon
C'T\left(\left\|\frac{\partial f}{\partial t}\right\|^2_\infty+\|f\|^2_\infty\right),
\]
where $C'>0$ depends only on $c$ and $\alpha$. Since
$\varepsilon>0$ was arbitrary, the desired result follows.
\end{proof}

\subsection{Markov processes and additive functionals}
\label{sec2.2}

In what follows $E\cup\{\partial\}$ is a one-point compactification of $E$.
If $E$ is already compact then we adjoin $\partial$ to $E$ as an
isolated point. When considering Dirichlet forms, we adopt the
convention that every function $f$ on $E$ is extended to
$E\cup\{\partial\}$ by setting $f(\partial)=0$. When considering
time dependent Dirichlet forms,  we adopt the convention that
every function $\varphi$ on $E$ is extended to $E^1$ by setting
$\varphi(t,x)=\varphi(x)$, $(t,x)\in E^1$, and every function $f$
on $E^1$ (resp. $E_{0,T}$) is extended to $E^1\cup\{\partial\}$ by
setting $f(\partial)=0$ (resp. $f(z)=0$ for $z\in
E^1\cup\{\partial\}\setminus E_{0,T})$.

Let $\EE$ be the form defined by (\ref{eq2.23}). By \cite[Theorem
6.3.1]{O2}, there exists a Hunt process
$\BBM=(\Omega,(\FF_t)_{t\ge0}, (\BBX_t)_{t\ge0},(P_z)_{z\in
E^1\cup\{\partial\}})$ with state space $E^1$, life time $\zeta$
and cemetery state $\partial$ associated with $\EE$ in the
resolvent sense, i.e. for every $\alpha>0$ and $f\in
L^2(E^1;m_1)\cap\BB_b(E^1)$ the resolvent of $\BBM$ defined as
\[
\BBR_{\alpha}f(z)=\int^{\infty}_0e^{-\alpha
t}E_zf(\BBX_t)\,dt,\quad z\in E^1,f\in\BB_b(E^1),
\]
is an $\EE$-quasi-continuous $m_1$-version of the resolvent
associated with the form $\EE$.  By \cite[Theorem 6.3.1]{O2}, if
\begin{equation}
\label{eq2.9} \BBX_t=(\tau(t),X_{\tau(t)}),\quad t\ge0,
\end{equation}
is a decomposition of $\BBX$ into the process on $\BR$ and on $E$,
then $\tau$ is the uniform motion  to the right, i.e.
$\tau(t)=\tau(0)+t$, $\tau(0)=s$, $P_z$-a.s. for $z=(s,x)\in E^1$.
Moreover, one can check that if $B^{(t)}=B^{(0)}$ for $t\in\BR$,
then  the process
$\BM^{(0)}=(\Omega,(\FF_{t})_{t\ge0},(X_{t})_{t\ge0},
(P_{0,x})_{x\in E\cup\{\partial\}})$ is a Hunt process with life
time $\xi=\inf\{t\ge0:X_{t}\in\partial\}$ associated with the
form $(B^{(0)},V)$.

Let us recall that an additive functional (AF for short) of $\BBM$
is called {\em natural} if $A$ and $\BBM$ have no common
discontinuities. It is known (see \cite[Section 2]{K:JFA}) that
for every $\mu\in S(E^1)$ there exists a unique positive natural
AF $A$ of $\BBM$ such that $A$ is in the Revuz correspondence with
$\mu$, i.e. for every $m_1$-integrable $\alpha$-coexcessive
function $h$ with $\alpha>0$,
\[
\lim_{\beta\rightarrow\infty}\beta E_{h\cdot m_1}
\int^{\infty}_0e^{-(\alpha+\beta)t}
f(\BBX_t)\,dA_t=\int_{E^1}f(z)h(z)\,\mu(dz),\quad
f\in\BB_b^+(E^1),
\]
where $E_{h\cdot m_1}$ denotes the expectation with respect to
$P_{h\cdot m_1}(\cdot)=\int_{E^1}P_z(\cdot)h(z)\,m_1(dz)$. In what
follows we will denote it by $A^{\mu}$. Conversely, if $A$ is
a positive natural AF of $\BBM$ then modifying the proof of
\cite[Lemma 5.1.7]{FOT} (we replace quasi-notions and facts used
in the proof in \cite{FOT} by the corresponding quasi-notions and
facts from \cite[Sections 2--4]{O2}; for the case of
(non-symmetric) Dirichlet form see also \cite[Theorem 5.6]{O1})
one can show that there exists a smooth measure on $E^1$ such that
$A$ is in the Revuz correspondence with $\mu$.

We set
\[
\RR(E_{0,T})=\left\{\mu:|\mu|\in
S(E_{0,T}),\,E_z\int^{\zeta_{\tau}}_0dA^{|\mu|}_t<\infty \mbox{ for
$m_1$-a.e. }z\in E_{0,T}\right\},
\]
where
\[
\zeta_{\tau} =\zeta\wedge(T-\tau(0)).
\]
By \cite[Proposition 3.4]{K:JFA}, in the definition of
$\RR(E_{0,T})$ one can replace $m_1$-a.e. by q.e. (with respect to
$\EE$). By \cite[Proposition 3.8]{K:JFA}, if $(B,V)$ is a
(non-symmetric) Dirichlet form or, more generally, a
semi-Dirichlet form satisfying the duality condition (see
\cite{K:JFA} for the definition), then
$\MM_{0,b}(E_{0,T})\subset\RR(E_{0,T})$. The inclusion may be
strict (see \cite[Example 5.2]{K:JFA}).

Let $\mu\in S(E)$. Since $\BM^{(0)}$ corresponds to $(B,V)$, by
\cite[Theorem 4.1.16]{O2} there is a unique positive continuous AF
$A^{0,\mu}$ of $\BM^{(0)}$ such that $A^{0,\mu}$ is in the Revuz
correspondence with $\mu$, i.e.
\[
\lim_{\alpha\rightarrow\infty}\alpha
E_{m}\int^{\infty}_0e^{-\alpha t}
f(X_t)\,dA^{0,\mu}_t=\int_{E}f(x)\,\mu(dx),\quad f\in\BB_b^+(E).
\]
We set
\[
\RR(E)=\left\{\mu:|\mu|\in S(E), E_{0,x}\int^{\zeta}_0dA^{0,|\mu|}_t<\infty \mbox{ for
$m$-a.e. }x\in E\right\}.
\]
By  \cite[Lemma 4.2]{KR:JFA}, in the above definition of the class
$\RR(E)$ one can replace $m$-a.e. by q.e. (with respect to
$(B,V)$), and  by \cite[Proposition 3.2]{KR:CM}, if $(B,V)$ is
a transient (non-symmetric) Dirichlet form, then
$\MM_{0,b}(E)\subset\RR(E)$. In general, the inclusion is strict
(see remarks following \cite[Proposition 3.2]{KR:CM}).

While considering elliptic equations and large time behaviour of
parabolic equations, we will assume that
\begin{equation}
\label{eq5.12} B^{(t)}(\varphi,\psi)=B(\varphi,\psi),\quad
\varphi,\psi\in V,\quad t\in\BR.
\end{equation}

\begin{lemma}
\label{lem2.1} Assume \mbox{\rm(\ref{eq5.12})}.
\begin{enumerate}
\item[\rm(i)]For every $s\ge0$ the distribution of
$(X\circ\theta_{\tau(0)},A^{0,\tilde\mu}\circ\theta_{\tau(0)})$
under $P_{s,x}$ is equal to the distribution of
$(X,A^{0,\tilde\mu})$ under $P_{0,x}$.
\item[\rm(ii)]$A^{\mu}=A^{0,\tilde\mu}\circ\theta_{\tau(0)}$.
\end{enumerate}
\end{lemma}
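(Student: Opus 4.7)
My plan exploits assumption \eqref{eq5.12}: when the forms do not depend on time, the spatial component $X$ of $\BBM$ is temporally homogeneous and coincides with the Hunt process associated with the single form $(B,V)$, so $\BBX$ is merely a skew product of the deterministic translation $\tau(t)=\tau(0)+t$ and $X$. This reduces everything to time-homogeneous statements combined with the uniqueness of the Revuz correspondence already recalled in Section~\ref{sec2.2}.

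For (i), $P_{s,x}$-almost surely $\tau(0)=s$, so $X\circ\theta_{\tau(0)}=(X_{s+t})_{t\geq 0}$ $P_{s,x}$-a.s. Since $B^{(t)}=B$ for every $t$, the Hunt process $\BM^{(s)}$ associated with $(B^{(s)},V)=(B,V)$ has the same transition function as $\BM^{(0)}$, hence the law of $(X_{s+t})_{t\geq 0}$ under $P_{s,x}$ equals the law of $(X_t)_{t\geq 0}$ under $P_{0,x}$. Since $A^{0,\tilde\mu}$ is a measurable functional of the trajectory of $X$, the equality of joint laws follows at once.

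For (ii) I would set $B_t:=A^{0,\tilde\mu}_t\circ\theta_{\tau(0)}$ and show (a) that $B$ is a positive continuous (and so natural) AF of $\BBM$ and (b) that its Revuz measure with respect to $\EE$ equals $\mu=dt\otimes\tilde\mu$; the uniqueness assertion of Section~\ref{sec2.2} then forces $B=A^\mu$. Point (a) is a routine transfer from the AF properties of $A^{0,\tilde\mu}$ using $\tau(0)\circ\theta_t=\tau(0)+t$ and $\theta_{\tau(0)}\circ\theta_t=\theta_{\tau(0)+t}$. For (b) I would test against $f\in\BB_b^+(E^1)$ and an $m_1$-integrable $\alpha$-coexcessive $h$: decomposing $E_{h\cdot m_1}$ by Fubini, using that $\BBX_t=(s+t,X_{s+t})$ under $P_{s,x}$ and applying part (i), one gets
\[
\beta E_{h\cdot m_1}\!\int_0^\infty\! e^{-(\alpha+\beta)t}f(\BBX_t)\,dB_t=\beta\!\int_\BR\!ds\!\int_E\! h(s,x)\,E_{0,x}\!\!\int_0^\infty\! e^{-(\alpha+\beta)t}f(s+t,X_t)\,dA^{0,\tilde\mu}_t\,m(dx).
\]
As $\beta\to\infty$ the measure $\beta e^{-\beta t}\,dt$ concentrates at $0$, so $f(s+t,X_t)$ can be replaced by $f(s,X_t)$; the Revuz correspondence between $A^{0,\tilde\mu}$ and $\tilde\mu$ then yields $\int_E f(s,x)h(s,x)\tilde\mu(dx)$, and integration in $s$ produces $\int_{E^1}fh\,d\mu$.

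The main technical point is the passage $\beta\to\infty$: $h$ is coexcessive with respect to the space--time form rather than $(B,V)$, and the $s$-dependence of $f(s+t,\cdot)$ blocks a direct application of the one-variable Revuz formula. I would circumvent this by first verifying the identity on a multiplicatively closed class of product test functions $f(s,x)=f_1(s)f_2(x)$ and $h(s,x)=h_1(s)h_2(x)$, with $h_1\in L^1(\BR)$ and $h_2$ $\alpha$-coexcessive for $(B,V)$; the computation then splits into a time integral controlled by a standard Tauberian estimate and a spatial integral governed by the classical Revuz formula. A monotone-class argument extends the identity to arbitrary $f$ and $h$, completing the identification of the Revuz measure and hence of $B$ with $A^\mu$.
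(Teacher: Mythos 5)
Your part (i) is where the real content of the lemma lies, and the step ``the equality of joint laws follows at once'' hides a genuine gap. For a general smooth measure $\tilde\mu$ the PCAF $A^{0,\tilde\mu}$ has no canonical pathwise formula: it is defined only up to exceptional sets and up to $P_{0,\cdot}$-null sets, so neither the composition $A^{0,\tilde\mu}\circ\theta_{\tau(0)}$ nor its law under $P_{s,x}$ is automatically meaningful, and the transfer of laws cannot be read off from abstract measurability of $A^{0,\tilde\mu}$ in the trajectory of $X$ (the image of $P_{s,x}$ under $\theta_{\tau(0)}$ is not one of the measures with respect to which the a.s.\ identification of the functional is given). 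The paper closes this by an explicit three-stage construction: first $\tilde\mu=f\cdot m$, where $A^{0,\tilde\mu}_t=\int_0^t f(X_r)\,dr$ is a genuine path functional and the claim reduces to the equality of the laws of $X\circ\theta_{\tau(0)}$ under $P_{s,x}$ and of $X$ under $P_{0,x}$; then $\tilde\mu\in S_0(E)$ via the representation $A^{0,\tilde\mu}_t=\int_0^te^r\,d\tilde A_r$ with $\tilde A$ a limit of integral functionals $\int_0^te^{-r}f_n(X_r)\,dr$; finally general smooth $\tilde\mu$ via a nest $\{F_n\}$ with $\fch_{F_n}\cdot\tilde\mu\in S_0(E)$ and monotone convergence. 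Some such approximation is indispensable; without it part (i), and with it part (ii), is not established.

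For part (ii) your strategy coincides with the paper's: show the shifted functional is a CAF of $\BBM$, identify its Revuz measure as $dt\otimes\tilde\mu$, and invoke uniqueness of the Revuz correspondence. The execution differs, and yours is considerably heavier. The paper observes that, by (i), $E_{s,x}\int_0^\infty e^{-\alpha t}\,dA_t=R_\alpha\tilde\mu(x)$ is independent of $s$, and then tests against products $\xi(s)g(x)$, so that the limit $\alpha\to\infty$ factors into $\int\xi\,ds$ times the classical one-variable Revuz limit $\alpha\int_E gR_\alpha\tilde\mu\,dm\to\int_E g\,d\tilde\mu$; no Tauberian argument and no space--time coexcessive test functions appear. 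Your route through the $h$-coexcessive characterization forces you to handle $f(s+t,X_t)$, to check that the products $h_1\otimes h_2$ you propose are actually admissible ($m_1$-integrable and coexcessive for the time-dependent form $\EE$, whose co-process runs backward in time), and to justify the concentration of $\beta e^{-\beta t}\,dt$ at $0$ for merely Borel $f$. All of this is doable (the concentration estimate follows from $\beta e^{-\beta\delta}E_{h_2\cdot m}\int_0^\infty e^{-\alpha t}\,dA^{0,\tilde\mu}_t\to0$ together with approximation of $f_1$ in $L^1(ds)$), but it is work the paper's choice of test functions avoids entirely.
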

\begin{proof}
(i) We first suppose that $\tilde\mu(dx)=f(x)\,m(dx)$ for some $f\in
L^1(E;m)$. Then $A^{0,\tilde\mu}_t=\int^t_0f(X_r)\,dr$, and hence
$A^{0,\tilde\mu}_t\circ\theta_{\tau(0)}
=\int^t_0f(X_r\circ\theta_{\tau(0)})\,dr$. Therefore (i) follows
from the fact that the distribution of $X$ under $P_{0,x}$ is
equal to the distribution of $X\circ\theta_{\tau(0)}$ under
$P_{s,x}$. Now assume that $\mu$ belongs to the set $S_0(E)$ of
smooth measures of finite energy. Then
$A^{0,\tilde\mu}_t=\int^t_0e^r\,d\tilde A_r$, where $\tilde
A_t=\lim_{n\rightarrow\infty}\tilde A^n_t$ and $\tilde
A^n_t=\int^t_0e^{-r}f_n(X_r)\,dr$ for some $f_n\in L^1(E;m)$ (see
the proof of \cite[Theorem 5.1.1]{FOT} or \cite[Theorem
4.1.10]{O2}). From this and the first part we deduce that (i) is
satisfied for every $\tilde\mu\in S_0(E)$. By \cite[Lemma
4.1.14]{O2}, there exists a nest $\{F_n\}$ such that
$\fch_{F_n}\cdot\tilde\mu\in S_0(E)$ for each $n\in\BN$. Since we
already know that (i) holds for $\tilde\mu$ replaced by
$\fch_{F_n}\cdot\tilde\mu$, applying the monotone convergence
theorem we conclude that it holds for $\tilde\mu$ replaced by
$\fch_{\bigcup^{\infty}_{n=1} F_n}\cdot\tilde \mu$, and hence for
$\tilde\mu$ because the set $E\setminus\bigcup^{\infty}_{n=1}F_n$
is exceptional.
\\
(ii) Let $A=A^{0,\tilde\mu}\circ\theta_{\tau(0)}$. Under
(\ref{eq5.12}) the distribution of $A$ under $P_{s,x}$ is equal to
the distribution of $A^{0,\tilde\mu}$ under $P_{0,x}$. Hence
\[
E_{s,x}\int^{\infty}_0e^{-\alpha t}\,dA_t
=E_{s,x}\int^{\infty}_0e^{-\alpha t}\,
d(A^{0,\tilde\mu}_t\circ\theta_s)
=E_{0,x}\int^{\infty}_0e^{-\alpha t}\,
dA^{0,\tilde\mu}_t=:R_{\alpha}\tilde\mu(x).
\]
One can check that $A$ is a CAF of $\BBM$. Let $\nu$ denote its
Revuz measure. Then for every $f$ of the form $f=\xi g$ with
$\xi\in \BB^+_b(\BR)$, $g\in\BB^+_b(E)$ we have
\begin{align*}
\int_{E^1}f(z)\nu(dz)
&=\lim_{\alpha\rightarrow\infty}\alpha\int_{E^1}\Big(f(z)
E_{z}\int^{\infty}_0e^{-\alpha t}\,dA_t\Big)\,m_1(dz)\\
&=\lim_{\alpha\rightarrow\infty}
\alpha\int_{E^1}\xi(s)g(x)R_{\alpha}\tilde\mu(x)\,ds\,m(dx)\\
&=\int_\BR\xi(s)\,ds\cdot \int_{E}g(x)\tilde\mu(dx)
=\int_{E^1}f(s,x)\,ds\,\tilde\mu(dx).
\end{align*}
Hence $\nu=dt\otimes\tilde\mu=\mu$. Since additive functionals are
uniquely determined by their Revuz measures,  this proves (ii).
\end{proof}

\section{Parabolic PDEs and generalized BSDEs} \label{sec3}

For $t\in[0,T]$ let $L_t$ denote the operator associated with the
form $(B^{(t)},V)$, i.e.
\[
D(L_t)=\{u\in V:v\mapsto B^{(t)}(u,v)\mbox{ is continuous with respect
to $(\cdot,\cdot)^{1/2}_H$ on $V$}\}
\]
and
\begin{equation}
\label{eq2.14} (-L_t\varphi,\psi)=B^{(t)}(\varphi,\psi),\quad
\varphi\in D(L_t), \psi\in V
\end{equation}
(see \cite[Proposition I.2.16]{MR}). Suppose we are given
measurable functions $\varphi:E\rightarrow\BR$,
$f,g:E_T\times\BR\rightarrow\BR$ and $\mu\in\RR(E_{0,T})$.  In
this section we consider the following Cauchy problems with
terminal and initial conditions:
\begin{equation}
\label{eq3.2} \partial_tu+L_tu= -f(t,x,u) -g(t,x,u)\cdot\mu,\qquad
u(T)=\varphi
\end{equation}
and
\begin{equation}
\label{eq3.3} \partial_tu-L_{t}u= f(t,x,u) +g(t,x,u)\cdot\mu,\qquad
u(0)=\varphi.
\end{equation}

\begin{definition}
Let $z\in E_T$. We say that a pair $(Y^z,M^z)$ is a solution of
the BSDE
\begin{equation}
\label{eq3.1} Y^z_{t}= \varphi(\BBX_{\zeta_{\tau}})
+\int_{t\wedge\zeta_{\tau}}^{\zeta_{\tau}} f(\BBX_r,Y^z_{r})\,dr
+\int_{t\wedge\zeta_{\tau}}^{\zeta_{\tau}} g(\BBX_r,
Y^z_{r})\,dA^{\mu}_{r}
-\int_{t\wedge\zeta_{\tau}}^{\zeta_{\tau}}dM^z_{r}, \quad t\ge0,
\end{equation}
on the space $(\Omega,\FF,P_z)$ if
\begin{enumerate}
\item[(a)]$Y^z$ is an $(\FF_t)$-progressively measurable process
of class D under $P_z$, $M^z$ is an $(\FF_t)$-martingale under
$P_z$ such that $M^z_0=0$,

\item[(b)]$\int^{\zeta_{\tau}}_0|f(\BBX_t,Y^z_t)|\,dt<\infty$,
$\int^{\zeta_{\tau}}_0 |g(\BBX_t,Y^z_t)|\,d|A^{\mu}|_t<\infty$,
$P_z$-a.s. (Here $|A^{\mu}|_t$ denotes the total variation of the
process $A^{\mu}$ on $[0,t]$),

\item[(c)] Eq. (\ref{eq3.1}) is satisfied $P_z$-a.s.
\end{enumerate}
\end{definition}

Let us recall that a c\`adl\`ag $(\FF_t)$-adapted process $Y$ is
of Doob's class D under $P_z$ if the collection
$\{Y_{\tau}:\tau\in\TT\}$, where $\TT$ is the set of all finite
valued $(\FF_t)$-stopping times, is uniformy integrable under
$P_z$. Let  $\LL^1(P_z)$ denote the space of c\`adl\`ag
$(\FF_t)$-adapted processes $Y$ with finite norm
\[
\|Y\|_{z,1}=\sup\{E_z|Y_{\tau}|:\tau\in\TT\}.
\]
It is known that $\LL^1(P_z)$ is complete (see \cite[p. 90]{DM}).
Moreover, if processes $Y^n$ are of class D and $Y^n\rightarrow Y$
in $\LL^1(P_z)$, then $Y$ is of class D. To see this, let us fix
$\varepsilon>0$ and choose $n$ so that
$\|Y^n-Y\|_{z,1}\le\varepsilon/2$. Since the family
$\{Y^{n}_{\tau}\}$ is of class D, there exists $\delta>0$ such
that if $P_z(A)<\delta$, then
$\int_A|Y^{n}_{\tau}|\,dP_z<\varepsilon/2$. It follows that if
$P_z(A)<\delta$ then for every finite $(\FF_t)$-stopping time
$\tau$,
\[
\int_A|Y_{\tau}|\,dP_z\le E_z|Y^n_{\tau}-Y_{\tau}|
+\int_A|Y^n_{\tau}|\,dP_z\le\varepsilon,
\]
which shows that $\{Y_{\tau}\}$ is uniformly integrable (see
\cite[Theorem I.11]{Pr}).

To simplify notation, in what follows we write
\[
f_{u}(t,x):=f(t,x, u(t,x)), \quad g_{u}(t,x) :=g(t,x,u(t,x)).
\]

\begin{definition}
(a) We say that $u:E_{0,T}\rightarrow\BR$ is a solution of
problem (\ref{eq3.2}) if $f_{u}\cdot m\in\RR(E_{0,T})$,
$g_u\cdot\mu\in\RR(E_{0,T})$ and for q.e. $z\in E_{0,T}$\,,
\begin{equation}
\label{eq4.1} u(z)=E_{z} \left(\varphi(\BBX_{\zeta_{\tau}})
+\int_{0}^{\zeta_{\tau}}f_u(\BBX_{t})\,dt
+\int_{0}^{\zeta_{\tau}}g_u(\BBX_{t})\,dA_{t}^{\mu}\right).
\end{equation}
(b) We say that $u:[0,T)\times E\rightarrow\BR$ is a solution of
problem (\ref{eq3.3}) if $\bar u$ defined as
\[
\bar u(t,x):=u(T-t,x),\quad(t,x)\in E_{0,T}\,,
\]
is a solution of the Cauchy problem with terminal condition of the
form
\begin{equation}
\label{eq3.39}
\partial_t\bar u+L_{T-t}\bar u= -f(T-t,x,\bar u)
-g(T-t,x,\bar u)\cdot(\mu\circ\iota_T^{-1}),\quad \bar
u(T)=\varphi,
\end{equation}
where $\iota_T:E_T\rightarrow E_T$, $\iota_T(t,x)=(T-t,x)$.
\end{definition}

\begin{remark}
\label{rem3.1} If equation (\ref{eq3.39}) has the uniqueness
property (i.e. has a unique solution $v_T$ for every $T>0$), then
for every $a>0$,
\begin{equation}
\label{eq3.40} \bar v_T(t,x)=v_T(T-t,x)=v_{T+a}(T+a-t,x)=\bar
v_{T+a}(t,x),\quad (t,x)\in [0,T)\times E.
\end{equation}
To see this, let us write $f^T_{v_T}(x,t):=f(T-t,x,v_T(t,x))$,
$g^T_{v_T}(x,t):=f(T-t,x,v_T(t,x))$. With this notation,
\[
\frac{\partial v_T}{\partial t}+L_{T-t} v_T =f^T_{v_T}
+g^T_{v_T}\cdot(\mu\circ \iota^{-1}_T), \qquad v_T(T)=\varphi
\]
and
\begin{equation}
\label{eq3.34} \frac{\partial v_{T+a}}{\partial t}+L_{T+a-t}
v_{T+a} = f^{T+a}_{v_{T+a}}+g^{T+a}_{v_{T+a}} \cdot(\mu\circ
\iota^{-1}_{T+a}),\qquad v_{T+a}(T+a)=\varphi.
\end{equation}
Of course, (\ref{eq3.40}) will be proved once we show that
\begin{equation}
\label{eq3.37} v_T(t,x)=v_{T+a}(a+t,x),\quad (t,x)\in E_{0,T}.
\end{equation}
It is known (see \cite[p. 1213]{K:JFA}) that there exists a
generalized nest $\{F_n\}$ on $E_{0,T+a}$ such that
$\Phi^{n,T+a}:=\fch_{F_n}\cdot(f^{T+a}_{v_{T+a}}+
g^{T+a}_{v_{T+a}} \cdot(\mu\circ\iota^{-1}_{T+a}))\in
S_0(E_{0,T+a})$ for each $n\in\BN$. Let $v^n_{T+a}$ denote the
solution of the linear equation
\begin{equation}
\label{eq3.35} \frac{\partial v^n_{T+a}}{\partial t}+L_{T+a-t}
v^n_{T+a}=\Phi^{n,T+a},\qquad v^n_{T+a}(T+a)=\varphi,
\end{equation}
and let
\begin{equation}
\label{eq3.38} v^n_{T+a,a}(t,x):=v^n_{T+a}(a+t,x),\quad (t,x)\in
(-a,T]\times E.
\end{equation}
By \cite[Theorem 3.7]{K:JFA}, $v^n_{T+a}$ is a weak solution of
(\ref{eq3.35}). Therefore making a simple change of variables
shows that $v^n_{T+a,a}$ is a weak solution of the linear equation
\begin{equation}
\label{eq3.36} \frac{\partial v^n_{T+a,a}}{\partial t}+L_{T-t}
v^n_{T+a,a}=\mathbf{1}_{F_n}^a\cdot(f^{T}_{v_{T+a,a}}+
g^T_{v_{T+a,a}}\cdot (\mu\circ\iota^{-1}_T)), \quad
v^n_{T+a,a}(T)=\varphi,
\end{equation}
where $\mathbf{1}^a_{F_n}(t,x)=\mathbf{1}_{F_n}(t+a,x)$. Using the
probabilistic representation of the solution of (\ref{eq3.35}) and
the fact that $\{F_n\}$ is a nest, one can easily show that
$v^n_{T+a}\rightarrow v_{T+a}$ pointwise as $n\rightarrow\infty$.
Similarly,  using the probabilistic representation of the solution
of (\ref{eq3.36}) one can show that $v^n_{T+a,a}$ converges
pointwise as $n\rightarrow\infty$ to the solution of
(\ref{eq3.34}), that is to $v_T$. This and (\ref{eq3.38}) imply
(\ref{eq3.37}).
\end{remark}

In the rest of this section we say that some property is satisfied
quasi-everywhere (q.e. for brevity) if the set of those $z\in E^1$
for which it does not hold is exceptional with respect to the form
$\EE$.

In what follows we  say that a Borel measurable
$F:E_{0,T}\rightarrow\BR$ is $\mu$-quasi-integrable ($F\in
qL^1(E_{0,T};\mu)$ in notation) if
$P_z(\int^{\zeta_{\tau}}_0|F(\BBX_t)|\,dA^{\mu}_t<\infty)=1$ for
q.e. $z\in E_{0,T}$.

Let us remark that if $\mu=m_1$, then $A^{\mu}_t=t$, $t\ge0$, so
$m_1$-quasi-integrability coincides with the notion of
quasi-integrability considered in \cite[Section 5]{K:JFA}) (see
also \cite[Section 2]{K:AMPA}).

Our basic assumptions on the data are the following.
\begin{enumerate}
\item[(P1)]$\varphi\in L^1(E;m)$, $\mu\in\RR^{+}(E_{0,T})$.

\item[(P2)]$f(\cdot,\cdot,y),g(\cdot,\cdot,y)$
are measurable for every $y\in\BR$ and $f(t,x,\cdot),g(t,x,\cdot)$
are continuous for every $(t,x)\in E_{0,T}$.

\item[(P3)]There is $\alpha\in\BR$ such that
$(f(t,x,y)-f(t,x,y'))(y-y')\le \alpha |y-y'|^{2}$ for
all $y,y'\in\BR$ and $(t,x)\in E_{0,T}$.

\item[(P4)]$f(\cdot,\cdot,0)\cdot m_1\in \RR(E_{0,T})$ and
$(t,x)\mapsto f(t,x,y)\in qL^{1}(E_{0,T};m_1)$ for every
$y\in\BR$.

\item[(P5)]$(g(t,x,y)-g(t,x,y'))(y-y')\le0$ for
all $y,y'\in\BR$ and  $(t,x)\in E_{0,T}$.

\item[(P6)]$g(\cdot,\cdot,0)\cdot\mu\in\RR(E_{0,T})$ and
$(t,x)\mapsto g(t,x,y)\in qL^{1}(E_{0,T};\mu)$ for every
$y\in\BR$.
\end{enumerate}

In what follows we denote by $\DD^q(P_z)$, $q>0$,  the space of
all $(\FF_t)$-progressively measurable c\`adl\`ag processes $Y$
such that $E_z\sup_{t\ge0}|Y_t|^q<\infty$.

\begin{theorem}
\label{th3.1}  Assume that \mbox{\rm(P1)--(P6)} are satisfied and
$A^{\mu}$ is continuous.
\begin{enumerate}
\item[\rm(i)]There exists a unique solution $u$ of problem
\mbox{\rm(\ref{eq3.2})}.
\item[\rm(ii)]Let
\[
M_{t}^{z}=E_{z}\left(\varphi(\BBX_{\zeta_{\tau}})
+\int_{0}^{\zeta_{\tau}} f_{u}(\BBX_{s})\,ds
+\int_{0}^{\zeta_{\tau}} g_{u}
(\BBX_s)\,dA_{s}^{\mu}\,\Bigl|\,\FF_{t}\right)-u(\BBX_{0}).
\]
Then there exists a c\`adl\`ag $(\FF_t)$-adapted process $M$ such
that $M_t=M^z_t$, $t\in[0,T]$, $P_z$-a.s. for q.e. $z\in E_{0,T}$,
and for q.e $z\in E_{0,T}$ the pair $(u(\BBX),M)$ is a unique
solution of \mbox{\rm(\ref{eq3.1})} on the space
$(\Omega,\FF,P_z)$. Moreover,  $u(\BBX)\in\DD^q(P_z)$ for
$q\in(0,1)$ and $M$ is a uniformly integrable martingale under
$P_z$ for q.e. $z\in E_{0,T}$. Finally,  for q.e. $z\in E_{0,T}$,
\begin{align}
\label{eq3.33} &E_z\int_{0}^{\zeta_{\tau}} f_{u}(\BBX_t)\,dt
+\int_{0}^{\zeta_{\tau}} g_{u}
(\BBX_t)\,dA_{t}^{\mu}\nonumber\\
&\qquad\le E_z\left(\varphi(\BBX_{\zeta_{\tau}})
+2\int^{\zeta_{\tau}}_0|f(\BBX_t,0)|\,dt
+3\int^{\zeta_{\tau}}_0|g(\BBX_t,0)|\,dA^{\mu}_t\right).
\end{align}
\end{enumerate}
\end{theorem}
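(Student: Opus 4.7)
The plan is to solve the BSDE (\ref{eq3.1}) pathwise for q.e. starting point $z\in E_{0,T}$ by monotone approximation from the known case $g\equiv 1$ treated in \cite{KR:JFA,KR:CM}, then recover a Borel function $u$ via the Markov property of $\BBM$, and finally verify (\ref{eq4.1}) together with the integrability $u(\BBX)\in\DD^q(P_z)$ and the estimate (\ref{eq3.33}). To set up the approximation, I would replace $g$ and $f$ by the symmetric truncations $g_n(t,x,y)=T_n g(t,x,T_n y)$, $f_n(t,x,y)=T_n f(t,x,T_n y)$, where $T_n$ is truncation at level $n$; since $g_n$ is bounded and inherits (P5), for any bounded $v$ the signed measure $g_n(\cdot,\cdot,v)\cdot\mu$ lies in $\MM_b\cap\RR(E_{0,T})$, and so the perturbed problem reduces to the $g\equiv1$ setting with driving measure $f_n(\cdot,\cdot,v)\cdot m_1+g_n(\cdot,\cdot,v)\cdot\mu$. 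For fixed bounded $f_n,g_n$ a Picard iteration -- contracting in a weighted $\LL^1(P_z)$-norm thanks to (P3) and (P5) -- produces a solution $(Y^{n,z},M^{n,z})$, and by the Markov property a deterministic $u_n$ such that $Y^{n,z}_t=u_n(\BBX_t)$.

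The core of the argument is a uniform a priori estimate. Applying a generalized It\^o--Tanaka formula to $|Y^{n,z}|$ on $[0,\zeta_\tau]$ (which is legitimate because $A^\mu$ is continuous by assumption), and invoking (P3) via an exponential change of variables on the process $e^{-\alpha t}Y^{n,z}_t$ together with the sign condition (P5), one obtains
\[
|Y^{n,z}_0|\le E_z\Big(|\varphi(\BBX_{\zeta_\tau})|+\int_0^{\zeta_\tau}|f_n(\BBX_s,0)|\,ds+\int_0^{\zeta_\tau}|g_n(\BBX_s,0)|\,dA^\mu_s\Big),
\]
uniformly in $n$. A refinement, splitting $Y^{n,z}$ into positive and negative parts and applying It\^o to $Y^{+}$ and $Y^{-}$ separately, yields control of $E_z\int|f_{u_n}|\,dt$ and $E_z\int|g_{u_n}|\,dA^\mu$ in terms of the $y=0$ data; the constants $2$ and $3$ in (\ref{eq3.33}) arise by bookkeeping of the self-interaction terms that are eliminated by (P3) and (P5). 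Lenglart's domination inequality then delivers the $\DD^q(P_z)$-bound for every $q\in(0,1)$, uniformly in $n$.

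For the passage to the limit, It\^o applied to $|Y^{n,z}-Y^{k,z}|$ together with the uniform integrability just obtained and the monotonicity of $f,g$ shows $\{Y^{n,z}\}$ is Cauchy in $\LL^1(P_z)$ for q.e. $z$. The limit $Y^z$ is of class D by the closedness property noted in the text, and defines a Borel $u$ with $Y^z_t=u(\BBX_t)$ via the Markov property. One verifies (\ref{eq4.1}) by passing to the limit in the integral equation, using pointwise convergence $u_n\to u$ together with the uniform integrability of the drift terms guaranteed by the a priori estimate and by (P4), (P6). The martingale $M$ arises as the $\LL^1$-limit of $M^{n,z}$ and is uniformly integrable by (\ref{eq3.33}). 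Uniqueness follows from (P3), (P5) applied to the difference of two solutions: It\^o's formula on $|Y^{1,z}-Y^{2,z}|$ produces $Y^{1,z}=Y^{2,z}$, $P_z$-a.s.

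The main obstacle I anticipate is controlling the $g\cdot\mu$ term when $g$ is only continuous and monotone in $y$, not Lipschitz. Strong convergence of $g_n(\cdot,\cdot,u_n)$ to $g(\cdot,\cdot,u)$ along the paths of $\BBX$ cannot be expected; instead one must exploit the sign condition (P5) to absorb the $y$-dependent part of the generator inside the It\^o computations, and combine pointwise convergence of $u_n$ with the uniform $A^\mu$-integrability from (\ref{eq3.33}) and the $\RR(E_{0,T})$ assumption on $g(\cdot,\cdot,0)\cdot\mu$ from (P6) in order to conclude $\int g_n(u_n)\,dA^\mu\to\int g(u)\,dA^\mu$ in $L^1(P_z)$. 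Pinning down (\ref{eq3.33}) with the exact constants $2$ and $3$ requires the most exacting book-keeping in the It\^o computation and is the technical crux.
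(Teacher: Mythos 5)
Your overall architecture (truncate the data, get uniform a priori bounds via the Meyer--Tanaka formula using (P3) and (P5), show the approximants are Cauchy in $\LL^1(P_z)$ and $\DD^q(P_z)$, then recover $u$ from the Markov property) is the same as the paper's, and your account of where the constants $2$ and $3$ in (\ref{eq3.33}) come from is essentially right. But there is a genuine gap at the very first step: you construct the approximating solutions $(Y^{n,z},M^{n,z})$ by ``a Picard iteration --- contracting in a weighted $\LL^1(P_z)$-norm thanks to (P3) and (P5)''. Monotonicity does not make the Picard map a contraction: to estimate $\Phi(Y)-\Phi(Y')$ you must control $|f(t,Y_t)-f(t,Y'_t)|$ for two \emph{inputs}, and (P3), (P5) only give a sign when the increment of the generator is paired with $\mathrm{sgn}$ of the difference of the \emph{outputs}. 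Since $f(t,x,\cdot)$, $g(t,x,\cdot)$ are merely continuous and monotone (truncating the $y$-argument by $T_n$ does not make them Lipschitz), the contraction claim fails, and with it your construction of the basic approximants. The paper handles exactly this point by invoking a separate existence result for bounded data with monotone continuous generators (a modification of \cite[Lemma 2.6]{KR:JFA}); some substitute for Picard iteration --- e.g.\ a further Lipschitz regularization of the generator plus a monotone stability argument --- is indispensable here and is missing from your sketch.

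Two smaller but real issues. First, your truncation $f_n(t,x,y)=T_nf(t,x,T_ny)$, $g_n(t,x,y)=T_ng(t,x,T_ny)$ breaks the Cauchy estimate you rely on: in the Meyer--Tanaka computation for $|Y^{n,z}-Y^{k,z}|$ the term $\mathrm{sgn}(\delta Y)\bigl(f_n(t,Y^k_t)-f_k(t,Y^k_t)\bigr)$ is \emph{not} controlled by the tail of the data alone, because it depends on $Y^k$ through both truncation levels; so ``monotonicity of $f,g$'' does not suffice and an extra localization via the a priori bound is needed. The paper avoids this entirely by truncating only the zero-level data, setting $f_n(t,y)=f(t,y)-f(t,0)+T_nf(t,0)$ and $T_ng(t,0)\,dA^n_t$ with $A^n_t=\int_0^t\fch_{\{A^{\mu}_r\le n\}}\,dA^{\mu}_r$, so that $f_m(t,y)-f_n(t,y)$ is independent of $y$ and the Cauchy bound reduces cleanly to $E_z\Psi^n\to0$. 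Second, your claim that $g_n(\cdot,\cdot,v)\cdot\mu\in\MM_b(E_{0,T})$ is false in general: the theorem only assumes $\mu\in\RR^{+}(E_{0,T})$, which need not be a bounded measure; what you actually need (and do have) is membership in $\RR(E_{0,T})$.
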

\begin{proof}
By using the standard change of variables (see, e.g., the
beginning of the proof of \cite[Lemma 3.1]{BDHPS}), without loss
of generality we may and will assume that $\alpha\le0$ in
condition (P3).

We first prove (ii). The uniqueness of a solution of BSDE
(\ref{eq3.1}) follows from (P3), (P5) and the fact that $\mu$ is
nonnegative.  The proof is standard. We may argue for instance as
in the proof of \cite[Proposition 2.1]{KR:JFA} with obvious
changes. We divide the proof of existence of a solution into two steps. \\
Step 1. Let $\xi=\varphi(\BBX_{\zeta_{\tau}})$,
$f(t,y)=f(\BBX_t,y)$, $g(t,y)=g(\BBX_t,y)$ and let $A$ be a
continuous increasing $(\FF_t)$-adapted process. Assume that
\[
T\cdot\sup_{0\le t\le T}|f(t,0)| +A_{T}\cdot\sup_{0\le t\le
T}|g(t,0)|+|\xi|\le c
\]
$P_z$-a.s. $z\in E_{0,T}$ for some $c>0$, and write  $\bar
f_c(t,y)=f(t,T_c(y))$, $\bar g_c(t,y)=g(t,T_c(y))$, where
\begin{equation}
\label{eq3.32} T_c(y)=((-c)\vee y)\wedge c,\quad y\in \BR.
\end{equation}
Then modifying slightly the proof of \cite[Lemma 2.6]{KR:JFA}, we
show that there exists a unique solution $(Y,M)$ of the BSDE
\begin{align}
\label{eq3.22}
Y_t&=\xi+\int^{\zeta_{\tau}}_{t\wedge\zeta_{\tau}}\bar
f_c(s,Y_s)\,ds +\int^{\zeta_{\tau}}_{t\wedge\zeta_{\tau}}
(\bar g_c(s,Y_s)-g(s,0))\,dA^{\mu}_s\nonumber\\
&\quad +\int^{\zeta_{\tau}}_{t\wedge\zeta_{\tau}}\bar
g_c(s,0)\,dA_s -\int^{\zeta_{\tau}}_{t\wedge\zeta_{\tau}}dM_s,
\quad t\ge0,
\end{align}
on the space $(\Omega,\FF,P_z)$ (for brevity, in our notation we
drop the dependence of $Y,M$ on $z$). Let $\mbox{sgn}(x)=1$ if
$x>0$ and $\mbox{sgn}(x)=-1$ if $x\le0$. By the Meyer-Tanaka
formula (see \cite[p. 216]{Pr}) and the fact that $A^{\mu}$ is
continuous,
\begin{align*}
|Y_t|&\le|Y_{\zeta_{\tau}}|
-\int^{\zeta_{\tau}}_{t\wedge\zeta_{\tau}}
\mbox{sgn}(Y_{s})\,dY_s\\
&=|\xi|+\int^{\zeta_{\tau}}_{t\wedge\zeta_{\tau}}
\mbox{sgn}(Y_{s})(\bar f_c(s,Y_s)-f(s,0))\,ds
+\int^{\zeta_{\tau}}_{t\wedge\zeta_{\tau}}\mbox{sgn}(Y_{s})f(s,0)\,ds\\
&\quad+\int^{\zeta_{\tau}}_{t\wedge\zeta_{\tau}}
\mbox{sgn}(Y_{s})\{(\bar g_c(s,Y_s)-g(s,0))\,dA^{\mu}_s +\bar
g_c(s,0)\,dA_s\} -\int^{\zeta_{\tau}}_{t\wedge\zeta_{\tau}}
\mbox{sgn}(Y_{s-})\,dM_s.
\end{align*}
From this, (\ref{eq3.22}) and (P3), (P5) we get
\begin{equation}
\label{eq3.9} |Y_t|=E_z(|Y_t|\,|\FF_t)\le
E_z\left(|\xi|+\int^{\zeta_{\tau}}_0(|f(s,0)|\,ds
+|\bar g_c(s,0)|\,dA_s)\,\Bigl|\,\FF_t\right)\le c,
\end{equation}
which shows that in fact $(Y,M)$ is a solution of (\ref{eq3.22})
with $\bar f_c$ replaced by $f$ and $\bar g_c$ replaced by $g$.
\\
Step 2. For $n\ge0$, we set  $\xi^n=T_n(\xi)$,
$f_n(t,y)=f(t,y)-f(t,0)+T_n(f(t,0))$ (with $\xi$, $f(t,y)$ defined
in Step 1) and $A^n_t=\int^t_0\fch_{\{A^{\mu}_{r}\le
n\}}\,dA^{\mu}_{r}$. By Step 1, for each $n\ge0$ there exists a
unique solution $(Y^{n},M^{n})$ of the BSDE
\begin{align}
\label{eq3.11}
Y^n_t&=\xi^n+\int^{\zeta_{\tau}}_{t\wedge\zeta_{\tau}}f_n(s,Y^n_s)\,ds
+\int^{\zeta_{\tau}}_{t\wedge\zeta_{\tau}}
(g(s,Y^n_s)-g(s,0))\,dA^{\mu}_s\nonumber\\
&\quad +\int^{\zeta_{\tau}}_{t\wedge\zeta_{\tau}}
T_ng(s,0)\,dA^{n}_s -\int^{\zeta_{\tau}}_{t\wedge\zeta_{\tau}}
dM^n_s,\quad t\ge0
\end{align}
on the space $(\Omega,\FF,P_z)$ (as in Step 1, for brevity, in our
notation we drop the dependence of $(Y^n,M^n)$ on $z$). For $m\ge
n\ge0$, we write $\delta Y=Y^m-Y^n$, $\delta M=M^m-M^n$,
$\delta\xi=\xi^m-\xi^n$. Since $\mu\in\RR^+(E_{0,T})$, $A^m$ is an
increasing process. Therefore using the Meyer-Tanaka formula  we
obtain
\begin{align*}
|\delta Y_t|
&\le|\delta\xi|+\int^{\zeta_{\tau}}_{t\wedge\zeta_{\tau}}
\mbox{sgn}(\delta Y_s)(f_m(s,Y^m_s)-f_n(s,Y^n_s))\,ds\\
&\quad+\int^{\zeta_{\tau}}_{t\wedge\zeta_{\tau}} \mbox{sgn}(\delta
Y_{s})(g(s,Y^m_s)-g(s,Y^n_s))\,dA^{\mu}_s\\
&\quad+\int^{\zeta_{\tau}}_{t\wedge\zeta_{\tau}}\mbox{sgn}(\delta
Y_{s}) \{T_mg(s,0)\,dA^m_s-T_ng(s,0)\,d A^n_s\}
+\int^{\zeta_{\tau}}_{t\wedge\zeta_{\tau}}\mbox{sgn}(\delta
Y_{s-})\,d(\delta M)_s.
\end{align*}
From the above  and (P3), (P5) it follows that
\begin{align*}
|\delta Y_t|&\le
|\delta\xi|+\int^{\zeta_{\tau}}_{t\wedge\zeta_{\tau}}
|T_mf(s,0)-T_nf(s,0)|\,ds
+\int^{\zeta_{\tau}}_{t\wedge\zeta_{\tau}}|T_mg(s,0)-T_ng(s,0)|\,dA^m_s\\
&\quad+\int^{\zeta_{\tau}}_{t\wedge\zeta_{\tau}}
|T_ng(s,0)|\,d(A^m_s-A^n_s)
+\int^{\zeta_{\tau}}_{t\wedge\zeta_{\tau}} \mbox{sgn}(\delta
Y_{s-})\,d(\delta M)_s.
\end{align*}
Hence
\begin{equation}
\label{eq3.10} |\delta Y_t|=E_z(|\delta Y_t||\FF_t)\le
E_z(\Psi^n|\FF_t),\quad t\ge0,
\end{equation}
where
\begin{align*}
\Psi^n&=|\xi|\fch_{\{|\xi|>n\}}
+\int^{\zeta_{\tau}}_0|f(t,0)|\fch_{\{|f(t,0)|>n\}}\,dt\\
&\quad +\int^{\zeta_{\tau}}_0|g(t,0)|\fch_{\{|g(t,0)|>n\}}
\,dA^{\mu}_t +\int^{\zeta_{\tau}}_0|g(t,0)|\,d(A^m_t-A^n_t).
\end{align*}
Observe that from our assumptions on the data $\varphi,f,g,\mu$ it
follows that $E_z\Psi^n\rightarrow0$ as $n\rightarrow\infty$ for
q.e. $z\in E_{0,T}$. By (\ref{eq3.10}), $\|\delta Y\|_{z,1}\le
E_z\Psi^n$, while by  \cite[Lemma 6.1]{BDHPS}, $E_z\sup_{t\le
T}|\delta Y_t|^q\le (1-q)^{-1}(E_z\Psi^n)^q$ for every
$q\in(0,1)$. Since the spaces $\DD^q(P_z)$ and $\LL^1(P_z)$ are
complete, for q.e. $z\in E_{0,T}$ there exists a process $Y^z$
such that $Y^z\in\DD^q(P_z)$ for $q\in(0,1)$, $Y^z$ is of class D
under $P_z$ and
\begin{equation}
\label{eq3.19} \|Y^n-Y^z\|_{1,z}\rightarrow0,\quad E_z\sup_{0\le
t\le \zeta_{\tau}} |Y^n_t-Y^z_t|^q\rightarrow0.
\end{equation}
We have
\begin{align*}
\int^{\zeta_{\tau}}_0|f_n(t,Y^n_t)-f(t,Y^z_t)|\,dt &\le
\int^{\zeta_{\tau}}_0|f(t,Y^n_t)-f(t,Y^z_t)|\,dt\\
&\quad +\int^{\zeta_{\tau}}_0|f(t,0)| \fch_{\{|f(t,0)|>n\}}\,dt.
\end{align*}
Applying the Meyer-Tanaka formula we get (see the proof of
(\ref{eq3.9}))
\[
|Y^n_t|\le E_z\left(|\xi|+\int^{\zeta_{\tau}}_0|f(s,0)|\,ds
+\int^{\zeta_{\tau}}_0|g(s,0)|\,dA^{\mu}_s\,\Bigl|\,\FF_t\right)
=:R_t,\quad t\ge0.
\]
For $k,N\in\BN$, we set
\begin{align*}
\tau_{k,N}&=\inf\left\{t\ge0:R_t\ge k,\int^t_0(|f(s,-k)|+|f(s,k)|)\,ds\right.\\
&\qquad\qquad+\left.\int^t_0(|g(s,-k)|+|g(s,k)|)\,dA^{\mu}_s\ge N\right\}\wedge
\zeta_{\tau}.
\end{align*}
By (\ref{eq3.11}),
\begin{align}
\label{eq3.15} Y^n_{t\wedge\tau_{k,N}}&=E_z\left(Y^n_{\tau_{k,N}}
+\int^{\tau_{k,N}}_{t\wedge\tau_{k,N}}f_n(s,Y^n_s)\,ds\right. \nonumber\\
&\qquad\left.+\int^{\tau_{k,N}}_{t\wedge\tau_{k,N}}
\{g(s,Y^n_s)-g(s,0))\,dA^{\mu}_s
+T_ng(s,0)\,dA^{n}_s\}\,\Bigl|\,\FF_t\right).
\end{align}
From the definition of $\tau_{k,N}$ it follows that
\[
\int^{\tau_{k,N}}_0|f(t,Y^n_t)|\,dt
+\int^{\tau_{k,N}}_0|g(t,Y^n_t)|\,dA^{\mu}_t\le N.
\]
From this, (P2) and (\ref{eq3.19}) one can deduce that
\begin{equation}
\label{eq3.16} \lim_{n\rightarrow\infty}E_z\int^{\tau_{k,N}}_0
(|f_n(t,Y^n_{t})-f(t,Y^z_{t})|\,dt=0
\end{equation}
and
\begin{equation}
\label{eq3.17} \lim_{n\rightarrow\infty}E_z\int^{\tau_{k,N}}_0
\{|g(t,Y^n_t)-g(t,0)|\,dA^{\mu}_t+|T_ng(t,0)|\,dA^n_t\}=0.
\end{equation}
By Doob's inequality (see, e.g., \cite[Theorem 1.9.1]{LS}) and
(\ref{eq3.19}), for every $\varepsilon>0$ we have
\begin{equation}
\label{eq3.23} \lim_{n\rightarrow\infty}P_x(\sup_{t\le T}
|E_z(Y^n_{\tau_{k,N}}
-Y^z_{\tau_{k,N}}|\FF_t)|>\varepsilon) \\
\le \varepsilon^{-1}\lim_{n\rightarrow\infty} E_z|Y^n_{\tau_{k,N}}
-Y^z_{\tau_{k,N}}|=0.
\end{equation}
Similarly, by (\ref{eq3.16}), (\ref{eq3.17}) and Doob's
inequality,
\begin{equation}
\label{eq3.24} \lim_{n\rightarrow\infty}P_z\left(\sup_{t\le T}
\left|E_z\left(\int^{\tau_{k,N}}_{t\wedge\tau_{k,N}}
(f(s,Y^n_s)-f(s,Y^z_s))\,ds\,\Bigl|\,\FF_t\right)\right|
>\varepsilon\right)=0
\end{equation}
and
\begin{equation}
\label{eq3.25} \lim_{n\rightarrow\infty}P_z\left(\sup_{t\le T}
\left|E_z\left(\int^{\tau_{k,N}}_{t\wedge\tau_{k,N}} (g(s,Y^n_s)
-g(s,Y^z_s))\,dA^{\mu}_s\,\Bigl|\,\FF_t\right)\right|>\varepsilon\right)=0
\end{equation}
for every $\varepsilon>0$. Letting $n\rightarrow\infty$ in
(\ref{eq3.15})  and using (\ref{eq3.23})--(\ref{eq3.25}) we
conclude that
\begin{equation}
\label{eq3.27} Y^z_{t\wedge\tau_{k,N}}=E_z\left(Y^z_{\tau_{k,N}}
+\int^{\tau_{k,N}}_{t\wedge\tau_{k,N}} \{f(s,Y^z_s)\,ds
+g(s,Y^z_s)\,dA^{\mu}_s\}\,\Bigl|\,\FF_t\right).
\end{equation}
We have
\begin{align*}
&\int^{\zeta_{\tau}}_0|f_n(t,Y^n_t)|\,dt
+\int^{\zeta_{\tau}}_0|g(t,Y^n_t)|\,dA^{\mu}_t \\
&\qquad\le \int^{\zeta_{\tau}}_0|f_n(t,Y^n_t)-f_n(t,0)|\,dt
+\int^{\zeta_{\tau}}_0|f_n(t,0)|\,dt\\
&\qquad\quad +\int^{\zeta_{\tau}}_0|g(t,Y^n_t)-g(t,0)|\,dA^{\mu}_t
+\int^{\zeta_{\tau}}_0|g(t,0)|\,dA^{\mu}_t\\
&\qquad=-\int^{\zeta_{\tau}}_0\mbox{sgn}(Y^n_{t})(f_n(t,Y^n_t)-f_n(t,0))\,dt
+\int^{\zeta_{\tau}}_0|f_n(t,0)|\,dt\\
&\qquad\quad
-\int^{\zeta_{\tau}}_0\mbox{sgn}(Y^n_{t})(g(t,Y^n_t)-g(t,0))\,dA^{\mu}_t
+\int^{\zeta_{\tau}}_0|g(t,0)|\,dA^{\mu}_t.
\end{align*}
By the Meyer-Tanaka formula and (\ref{eq3.11}),
\begin{align*}
|\xi^n|-|Y^n_0|&\ge -\int^{\zeta_{\tau}}_0\mbox{sgn}(Y^n_t)
f_n(t,Y^n_t)\,dt -\int^{\zeta_{\tau}}_0\mbox{sgn}(Y^n_{t})
g(t,Y^n_t)\,dA^{\mu}_t\\
&\quad-\int^{\zeta_{\tau}}_0\mbox{sgn}(Y^n_{t})T_ng(t,0)\,dA^n_t
-\int^{\zeta_{\tau}}_0\mbox{sgn}(Y^n_{t-})\,dM_t.
\end{align*}
Hence
\begin{align*}
&E_z\int^{\zeta_{\tau}}_0\{|f_n(t,Y^n_t)|\,dt
+|g(t,Y^n_t)|\,dA^{\mu}_t\} \\
&\qquad\le E_z\left(|\xi^n|+2\int^{\zeta_{\tau}}_0|f(t,0)|\,dt
+3\int^{\zeta_{\tau}}_0|g(t,0)|\,dA^{\mu}_t \right),
\end{align*}
so applying Fatou's lemma and (\ref{eq3.19}) gives
\begin{align}
\label{eq3.20}
&E_z\int^{\zeta_{\tau}}_0\{|f(t,Y^z_t)|\,dt+|g(t,Y^z_t)|\,dA^{\mu}_t\}
\nonumber\\
&\qquad \le E_z\left(|\varphi(\BBX_{\zeta_{\tau}})|
+2\int^{\zeta_{\tau}}_0|f(t,0)|\,dt
+3\int^{\zeta_{\tau}}_0|g(t,0)|\,dA^{\mu}_t\right)<\infty.
\end{align}
Since $f(\cdot,-k),f(\cdot,k)\in qL^1(E_{0,T};m_1)$ and
$g(\cdot,-k),g(\cdot,k)\in qL^1(E_{0,T};\mu)$,
$\tau_{k,N}\rightarrow\tau_k$ as $N\rightarrow\infty$, where
\[
\tau_{k}=\inf\{t\ge0:R_t\ge k\}\wedge \zeta_{\tau}.
\]
Hence $Y^z_{\tau_{k,N}}\rightarrow Y^z_{\tau_k}$ $P_z$-a.s., and
consequently,
\begin{equation}
\label{eq3.28} \lim_{N\rightarrow\infty}
E_z|Y^z_{\tau_{k,N}}-Y^z_{\tau_{k}}|=0
\end{equation}
since $Y^z$ is of class D. Letting $N\rightarrow\infty$ in
(\ref{eq3.27}) and using (\ref{eq3.20}), (\ref{eq3.28}) and Doob's
inequality we obtain
\begin{equation}
\label{eq3.29}
Y^z_{t\wedge\tau_k}=E_z\left(Y^z_{\tau_k}+\int^{\tau_k}_{t\wedge\tau_k}
\{f(s,Y^z_s)\,ds +g(s,Y^z_s)\,dA^{\mu}_s\}\,\Bigl|\,\FF_t\right).
\end{equation}
Since $\tau_k\rightarrow \zeta_{\tau}$ as $k\rightarrow\infty$,
letting $k\rightarrow\infty$ in (\ref{eq3.29}) and repeating
arguments used to prove  (\ref{eq3.29}) we get
\[
Y^z_{t}=E_z\left(\xi+\int^{\zeta_{\tau}}_{t\wedge \zeta_{\tau}}
\{f(s,Y^z_s)\,ds +g(s,Y^z_s)\,dA^{\mu}_s\}\,\Bigl|\,\FF_t\right).
\]
We may now repeat the reasoning following
\cite[(3.6)]{KR:CM} with the process $V$ from \cite{KR:CM} replaced by
$\int^{\cdot}_0g(t,Y^z_t)\,dA^{\mu}_t$ (see also the reasoning
following (\ref{eq5.17}) in the present paper) to prove that the
pair $(Y^z,\tilde M^z)$, where $\tilde M^z$ is a c\`adl\`ag
version of the martingale
\[
t\mapsto E_{z}\left(\varphi(\BBX_{\zeta_{\tau}})
+\int_{0}^{\zeta_{\tau}}\{f(\BBX_{s},Y^z_s)\,ds
+g(\BBX_s,Y^z_s)\,dA_{s}^{\mu}\}\,\Bigl|\,\FF_{t}\right)-\bar u(\BBX_{0}),
\]
is a solution of the BSDE
\begin{align}
\label{eq3.21} Y^z_{t}= \varphi(\BBX_{\zeta_{\tau}})
+\int^{\zeta_{\tau}}_{t\wedge\zeta_{\tau}}\{f(\BBX_s,Y^z_{s})\,ds
+ g(\BBX_s,Y^z_{s})\,dA^{\mu}_{s}\}
-\int^{\zeta_{\tau}}_{t\wedge\zeta_{\tau}}\,d\tilde M^z_{s}, \quad
t\ge0,
\end{align}
on $(\Omega,\FF,P_z)$. Furthermore, by \cite[Remark 3.6]{KR:JFA},
there exists a pair of processes $(Y,M)$ such that
$(Y_t,M_t)=(Y^z,\tilde M^z_t)$, $t\in[0,\zeta_{\tau}]$, $P_z$-a.s.
for q.e. $z\in E_{0,T}$. Let $u(z)=E_zY_0$. Then the argument from
the beginning of the proof of \cite[Theorem 5.8]{K:JFA} shows that
$Y_t=u(\BBX_t)$, $t\in[0,\zeta_{\tau}]$, which implies that $M$ is
a version of the martingale $M^z$ and that $(u(\BBX),M)$ is a
solution of (\ref{eq3.21}) for q.e. $z\in E_{0,T}$. In view of our
convention made at the beginning of Section \ref{sec2.2}, this
means that $(u(\BBX),M)$ is a solution of (\ref{eq3.1}) on the
space $(\Omega,\FF,P_z)$ for q.e. $z\in E_{0,T}$. Of course,
$u(\BBX)\in\DD^q(P_z)$. Furthermore, $M$ is a uniformly integrable
martingale under $P_z$, because under $P_z$ it is a version of the
closed martingale $M^z$. Finally, since we know that
$Y^z_t=u(\BBX)$, $t\in[0,\zeta_{\tau}]$, $P_z$-a.s., (\ref{eq3.33})
follows immediately from (\ref{eq3.20}). This completes the proof
of part (ii) of the theorem.

Part (i) follows from (ii). Indeed, since $\mu\in\RR(E_{0,T})$ and
we know that (\ref{eq3.20}) is satisfied with $Y^z$ replaced by
$u(\BBX)$ and $M$ is a martingale under $P_z$ for q.e. $z\in
E_{0,T}$, putting $t=0$ in (\ref{eq3.1}) and then taking the
expectation shows that $\bar u$ is a solution of (\ref{eq3.2}). To
show that $\bar u$ is unique one can argue as in the proof of
\cite[Theorem 5.8]{K:JFA}.
\end{proof}

\begin{remark} If $g$ does not depend on the last variable $y$, then in
Theorem \ref{th3.1} we may replace the assumptions
$\mu\in\RR^+(E_{0,T})$, $g(\cdot,\cdot,0)\cdot\mu\in\RR(E_{0,T})$
by the assumption $g\cdot\mu\in\RR(E_{0,T})$ (see \cite[Theorem
5.8]{K:JFA}).
\end{remark}

\begin{remark}
(i) By \cite[Proposition 3.4]{K:JFA}, the solution $u$ of Theorem
\ref{th3.1} is quasi-continuous.
\smallskip\\
(ii) Let the assumptions of  Theorem \ref{th3.1} hold, and
moreover,  $f(\cdot,\cdot,0)\in L^1(E_{0,T};m_1)$,
$g(\cdot,\cdot,0)\cdot\mu\in\MM_{0,b}(E_{0,T})$ and for  some
$\gamma\ge\alpha_0$ the form $\EE^{0,T}_\gamma$ has the dual
Markov property (for the definition of $\EE^{0,T}$ see
\cite[Section 3.3]{K:JFA}). Then by \cite[Proposition 3.13]{K:JFA}
and (\ref{eq3.33}),
\begin{align*}
\|f_{u}\|_{L^1(E_{0,T};m_1)} +\|g_{u}\cdot\mu\|_{TV} &\le
c(\|\varphi\|_{L^1(E;m)}+\|f(\cdot,\cdot,0)\|_{L^1(E_{0,T};m_1)}\\
&\quad+\|g(\cdot,\cdot,0)\cdot\mu\|_{TV}),
\end{align*}
where $\|\cdot\|_{TV}$ denotes the total variation norm. Therefore,
by \cite[Theorem 3.12]{K:JFA},  $u\in L^1(E_{0,T};m_1)$,
$T_ku\in L^2(0,T;V)$
for $k>0$ ($T_ku$ is
defined by (\ref{eq3.32})) and for every $k>0$ there is $C>0$
depending only on $k,\alpha,T$ such that
\[
\int^T_0B^{(t)}(T_k\bar u(t),T_k\bar u(t))\,dt\le
C(\|\varphi\|_{L^1(E;m)} +\|f(\cdot,0)\|_{L^1(E_{0,T};m_1)}
+\|g(\cdot,\cdot,0)\cdot\mu\|_{TV}).
\]
Moreover, if the forms $(B^{(t)},V)$ are (non-symmetric)
Dirichlet forms, then by \cite[Theorem 4.5]{KR:NoD}, $u$ is a
renormalized solution of (\ref{eq3.2}) in the sense defined in
\cite{KR:NoD}.
\end{remark}

\begin{remark}
In Theorem \ref{th3.1} we have assumed that the AF $A^{\mu}$ is
continuous. In the general case where $\mu\in\RR^+(E_{0,T})$ and
$A^{\mu}$ is possibly discontinuous,  one can prove the existence
of a solution of (\ref{eq3.2}) in the following sense: there
exists $u:E_T\rightarrow\BR$ such that $f_{u}\cdot m,
g_{u}\cdot\mu \in\RR(E_{0,T})$ and (\ref{eq4.1}) is satisfied with
$g_{u}$ replaced by $g_{\hat u}$, where $\hat u$ is the precise
version of $u$ (for the notion of a precise version of a parabolic
potential see \cite{Pi}). In the paper we decided to provide the
proof of less general result, because it suffices for the purposes
of Sections \ref{sec4}--\ref{sec6} in which our main results are
proved, and on the other hand, the proof of the general result is
more technical than the proof of Theorem \ref{th3.1}. Also note
that by \cite[Proposition 3.4]{K:JFA}, the solution $u$ described
above is quasi-c\`adl\`ag.
\end{remark}

\section{Convergence of BSDEs and Elliptic PDEs}
\label{sec4}

In this section, we assume that (\ref{eq5.12}). We denote by $L$
the operator associated via (\ref{eq2.14}) with the form $(B,V)$.
We also assume that $\mu\in\RR^+(E_{0,T})$ does not depend on time
and $f,g:E\times\BR\rightarrow\BR$, i.e. $f,g$ also do not depend
on time. For $v\in E\rightarrow\BR$, we set
\[
f_v(x):=f(x,v(x)),\quad g_v(x):=g(x,v(x)),\quad x\in E.
\]
To shorten notation, in what follows we denote $P_{0,x}$
by $P_x$, $E_{0,x}$ by $E_x$ and $\|\cdot\|_{(0,x);1}$ by $\|\cdot\|_{x,1}$.
Under the measure $P_x$,
\begin{equation}
\label{eq3.31} \BBX_t=(t,X_t),\quad t\ge0,\qquad
\zeta_{\tau}=T\wedge\zeta.
\end{equation}
and
\[
A^{\mu}_t=A^{0,\tilde\mu}_t,\quad t\ge0,
\]
where $\tilde\mu$ is determined by (\ref{eq2.6}).

In the rest of the paper we say that some property is satisfied
quasi-everywhere (q.e. for brevity) if the set of those $x\in E$
for which it does not hold is exceptional with respect to the form
$(B,V)$.

Let $\nu\in S(E)$. We will say that a Borel measurable
$F:E\rightarrow\BR$ is $\nu$-quasi-integrable ($F\in qL^1(E;\nu)$
in notation) if for every $T>0$, $P_x(\int^{\zeta\wedge T}_0
|F(X_t)|\,dA^{0,\nu}_t<\infty)=1$ for q.e. $x\in E$.

Note that in case $\nu=m$ the notion of quasi-integrability was
introduced in \cite[Section 2]{K:AMPA}. For a comparison of the
notion of $m$-integrability and the notion of quasi-integrability
in the analytic sense see \cite[Remark 2.3]{K:AMPA}.

In this section and Section \ref{sec5}, we  assume that the
data satisfy the following conditions.
\begin{enumerate}
\item[(E1)]$\varphi\in L^1(E;m)$, $\tilde\mu\in\RR^{+}(E)$.

\item[(E2)]$f(\cdot,y),g(\cdot,y)$
are measurable for every $y\in\BR$ and $f(x,\cdot),g(x,\cdot)$ are
continuous for every $x\in E$.

\item[(E3)]
$(f(x,y)-f(x,y'))(y-y')\le0$ for all $y,y'\in\BR$ and
$x\in E$.

\item[(E4)]$f(\cdot,0)\cdot m\in \RR(E)$ and
$f(\cdot,y)\in qL^{1}(E;m)$ for every $y\in\BR$.

\item[(E5)]$(g(x,y)-g(x,y'))(y-y')\le0$ for all $y,y'\in\BR$ and $x\in E$.

\item[(E6)]$g(\cdot,0)\cdot\tilde\mu\in\RR(E)$ and
$g(\cdot,y)\in qL^{1}(E;\tilde{\mu})$ for every $y\in\BR$.
\end{enumerate}

\begin{definition}
Let $x\in E$. We say that a pair $(Y^x,M^x)$ is a solution of the BSDE
\begin{align}
\label{eq4.11} Y^x_{t}= \int_{t\wedge\zeta}^{\zeta}
f(X_s,Y^x_s)\,ds +\int_{t\wedge\zeta}^{\zeta}
g(X_s,Y^x_s)\,dA^{\mu}_s -\int_{t\wedge\zeta}^{\zeta}dM^x_s, \quad
t\ge0,
\end{align}
on the space $(\Omega,\FF,P_x)$ if
\begin{enumerate}
\item[(a)]$Y^x$ is an $(\FF_t)$-progressively measurable process
of class D under $P_x$, $Y^x_{t\wedge\zeta}\rightarrow0$,
$P_x$-a.s. as $t\rightarrow\infty$ and $M^x$ is an $(\FF_t)$-local
martingale under $P_x$ such that $M^x_0=0$,

\item[(b)]For every $T>0$, $\int^{T}_0|f(X_t,Y^x_t)|\,dt<\infty$,
$\int^{T}_0|g(X_t,Y^x_t)|\,dA^{\mu}_t<\infty$, $P_x$-a.s., and
\[
Y^x_{t}= Y^x_{T\wedge\zeta}+\int_{t\wedge\zeta}^{T\wedge\zeta}
f(X_s,Y^x_s)\,ds +\int_{t\wedge\zeta}^{T\wedge\zeta}
g(X_s,Y^x_s)\,dA^{\mu}_s
-\int_{t\wedge\zeta}^{T\wedge\zeta}dM^x_s,\quad t\in[0,T].
\]
\end{enumerate}
\end{definition}

\begin{definition}
We say that $v:E\rightarrow\BR$ is a solution of problem
(\ref{eq1.2}) with $\lambda=0$ if $f_{v}\cdot m\in\RR(E)$,
$g_v\cdot\tilde\mu\in\RR(E)$ and for q.e. $x\in E$,
\begin{equation}
\label{eq3.7} v(x)=E_{x}\left(\int_{0}^{\zeta}f_v(X_t)\,dt
+\int_{0}^{\zeta}g_v(X_t)\,dA^{\mu}_t\right).
\end{equation}
\end{definition}

Suppose that for some $x\in E$ for every $n>0$ there exists a
solution $(Y^n,M^n)$ of the BSDE
\begin{align}
\label{eq4.24} Y^n_t&=\fch_{\{\zeta>n\}} \varphi(X_n)
+\int_{t\wedge\zeta}^{n\wedge\zeta} f(X_{s},Y^n_{s})\,ds\nonumber\\
&\quad+\int_{t\wedge\zeta}^{n\wedge\zeta} g(X_{s},Y^n_{s})\,dA^{\mu}_{s}
-\int_{t\wedge\zeta}^{n\wedge\zeta}dM^n_{s},\quad t\in [0,n], \quad
P_{x}\mbox{-a.s.}
\end{align}
on the probability space $(\Omega,\FF,P_x)$. The solutions may
depend on $x$ but for brevity, in our notation we drop the dependence
of $Y^n,M^n$ on $x$. In what follows by $\tilde Y^n,\tilde M^n$ we
denote the processes defined as
\begin{equation}
\label{eq5.02} \tilde Y^n_t=Y^n_t,\quad\tilde M^n_t=M^n_t,\quad
t<n,\qquad \tilde Y^n_t=0,\quad \tilde M^n_t=M^n_n,\quad t\ge n.
\end{equation}

\begin{proposition}
\label{prop4.1}  Assume that \mbox{\rm(E1)--(E6)} are satisfied.
For $0<n<m$, we  set $\delta Y=\tilde Y^m-\tilde Y^n$. Then for every
$x\in E$,
\begin{align}
\label{eq5.18} \|\delta Y\|_{x,1} &\le
E_x\Big(\fch_{\{\zeta>m\}}|\varphi(X_m)|
+\fch_{\{\zeta>n\}}|\varphi(X_n)|\nonumber\\
&\qquad\quad\left.+\int^{m\wedge\zeta}_{n\wedge\zeta} |f(X_t,0)|\,dt
+\int^{m\wedge\zeta}_{n\wedge\zeta} |g(X_t,0)|\,dA^{\mu}_t\right)
\end{align}
and
\begin{align}
\label{eq5.19} E_{x}\sup_{t\ge0}|\delta Y_t|^q
&\le\frac{1}{1-q}\Big(E_{x}(\fch_{\{\zeta>m\}}|\varphi(X_m)|
+\fch_{\{\zeta>n\}}|\varphi(X_n))|) \nonumber\\
&\qquad\qquad\left. +E_x\int^{m\wedge\zeta}_{n\wedge\zeta}
|f(X_t,0)|\,dt+E_x\int^{m\wedge\zeta}_{n\wedge\zeta}
|g(X_t,0)|\,dA^{\mu}_t\right)^q
\end{align}
for every $q\in(0,1)$. Moreover, for every $t\ge0$,
\begin{align}
\label{eq4.8} &E_x\int_0^{t\wedge\zeta}|f(X_{s},Y^n_{s})|\,ds
+E_x\int_0^{t\wedge\zeta}|g(X_{s},Y^n_{s})|\,dA^{\mu}_s\nonumber \\
&\qquad\le E_x\left(|Y^n_t| +2\int_0^{t\wedge\zeta}|f(X_{s},0)|\,ds
+2\int^{t\wedge\zeta}_0|g(X_s,0)|\,dA^{\mu}_{s}\right).
\end{align}
\end{proposition}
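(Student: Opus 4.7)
The strategy is to combine the Meyer--Tanaka formula with the monotonicity assumptions (E3), (E5). A useful preliminary observation is that under (E1) we have $\tilde\mu\in S(E)$, hence by Lemma \ref{lem2.1}(ii) the functional $A^{\mu}=A^{0,\tilde\mu}\circ\theta_{\tau(0)}$ is continuous; no jumps from $dA^{\mu}$ need to be tracked.

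For (\ref{eq4.8}) the argument is essentially the one used to derive (\ref{eq3.20}) in Theorem \ref{th3.1}. Applying the Meyer--Tanaka formula to $|Y^{n}|$ on $[0,t\wedge\zeta]$ and substituting the semimartingale decomposition from (\ref{eq4.24}) gives
\[
|Y^n_{t\wedge\zeta}|-|Y^n_0|\ge-\int_0^{t\wedge\zeta}\mbox{sgn}(Y^n_s)f(X_s,Y^n_s)\,ds-\int_0^{t\wedge\zeta}\mbox{sgn}(Y^n_s)g(X_s,Y^n_s)\,dA^{\mu}_s+\int_0^{t\wedge\zeta}\mbox{sgn}(Y^n_{s-})\,dM^n_s.
\]
Conditions (E3), (E5) yield $-\mbox{sgn}(Y^n_s)f(X_s,Y^n_s)\ge|f(X_s,Y^n_s)|-2|f(X_s,0)|$ and an analogous bound for $g$. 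Taking $E_x$ after a standard localization of the local martingale yields (\ref{eq4.8}), with the convention that $Y^n$ is constant after $\zeta$.

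For (\ref{eq5.18}) I apply the Meyer--Tanaka formula to $|Y^m-Y^n|$ on $[0,n\wedge\zeta]$; the monotonicity conditions (E3), (E5) make the signed drift and $dA^{\mu}$ contributions nonpositive, so for any stopping time $\tau\le n\wedge\zeta$,
\[
|Y^m_\tau-Y^n_\tau|\le E_x\bigl(|Y^m_{n\wedge\zeta}-Y^n_{n\wedge\zeta}|\big|\FF_\tau\bigr).
\]
From (\ref{eq4.24}), $Y^n_{n\wedge\zeta}=\fch_{\{\zeta>n\}}\varphi(X_n)$ and $Y^m_{n\wedge\zeta}=\fch_{\{\zeta>n\}}Y^m_n$, while $|Y^m_n|$ is bounded (via the $(\ref{eq4.8})$-type Meyer--Tanaka estimate on $[n,m\wedge\zeta]$ with $y'=0$) by
\[
|Y^m_n|\le E_x\Bigl(\fch_{\{\zeta>m\}}|\varphi(X_m)|+\int_n^{m\wedge\zeta}|f(X_s,0)|\,ds+\int_n^{m\wedge\zeta}|g(X_s,0)|\,dA^{\mu}_s\Big|\FF_n\Bigr).
\]
For a general stopping time $\tau$ I split into $\{\tau<n\}$, $\{n\le\tau<m\}$, $\{\tau\ge m\}$: on $\{\tau\ge m\}$, $\delta Y_\tau=0$; on $\{n\le\tau<m\}$, $\tilde Y^n_\tau=0$ and $|Y^m_\tau|$ is bounded by the above estimate with $n$ replaced by $\tau$; on $\{\tau<n\}$ the two displayed inequalities combine. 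In every case one obtains $|\delta Y_\tau|\le E_x(\Psi^{n,m}|\FF_\tau)$, where $\Psi^{n,m}$ is the bracket appearing on the right-hand side of (\ref{eq5.18}). Taking the expectation and then the supremum over $\tau\in\TT$ yields (\ref{eq5.18}); (\ref{eq5.19}) follows from the same pointwise inequality via the Doob-type $L^q$ maximal inequality for $q\in(0,1)$ applied to the nonnegative closed martingale $t\mapsto E_x(\Psi^{n,m}|\FF_t)$, exactly as in \cite[Lemma 6.1]{BDHPS}.

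The main technical delicacy is the regime change at time $n$: the truncated process $\tilde Y^n$ has a ``terminal drop'' from $\varphi(X_n)$ (on $\{\zeta>n\}$) down to $0$, and $\delta Y$ must therefore be controlled by concatenating Meyer--Tanaka estimates on $[0,n\wedge\zeta]$ and $[n\wedge\zeta,m\wedge\zeta]$. The localization needed to justify the vanishing of the local martingale expectations is routine and parallels Step 1 in the proof of Theorem \ref{th3.1}.
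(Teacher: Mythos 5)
Your proof is correct and follows essentially the same route as the paper: the Meyer--Tanaka formula combined with (E3), (E5) to eliminate the difference terms, the a priori bound on $|Y^m|$ to control the terminal mismatch at time $n$, and \cite[Lemma 6.1]{BDHPS} to pass from the pointwise bound $|\delta Y_t|\le E_x(\Psi^{n,m}|\FF_t)$ to (\ref{eq5.19}). The only cosmetic difference is that the paper runs a single Meyer--Tanaka estimate on $[0,m]$ for the extended processes $\tilde Y^n$, absorbing the drop of $\tilde Y^n$ at time $n$ into the bounded-variation term $V^n$, whereas you concatenate two estimates across the regime change at $n$; both yield the same bracket on the right-hand side of (\ref{eq5.18}).
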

\begin{proof}
By (\ref{eq4.24}),
\begin{align}
\label{eq4.9} Y^n_t&=Y^n_0-\int_0^{t\wedge\zeta}
\{f(X_{s},Y^n_{s})\,ds +g(X_{s},Y^n_{s})\,dA^{\mu}_{s}\}
+\int_0^{t\wedge\zeta}dM^n_{s}\nonumber\\
&=Y^n_0-\int_0^t(\fch_{[0,n\wedge\zeta]}(s) f(X_{s},Y^n_{s})\,ds
+\fch_{[0,n\wedge\zeta]}(s)
g(X_{s},Y^n_{s})\,dA^{\mu}_{s})\nonumber\\
&\quad +\int_0^t\fch_{[0,n\wedge\zeta]}(s)\,dM^n_{s}, \quad
t\in[0,n],\,P_{x}\mbox{-a.s.}
\end{align}
From the above and the fact that the process $A^{\mu}$ is
continuous  it follows that the pair $(\tilde Y^n,\tilde M^n)$
defined by (\ref{eq5.02}) satisfies
\begin{align}
\label{eq5.3} \tilde
Y^n_t&=Y^n_0-\int_0^t(\fch_{[0,n\wedge\zeta]}(s) f(X_{s},\tilde
Y^n_{s})\,ds +\fch_{[0,n\wedge\zeta]}(s)g(X_{s},\tilde Y^n_{s})
\,dA^{\mu}_{s}) \nonumber\\
&\quad+\int^t_0dV^n_{s}
+\int_0^t\fch_{[0,n\wedge\zeta]}(s)\,d\tilde M^n_{s}, \quad t\ge0,
\end{align}
where
\[
V^n_t=0\mbox{ if }t<n,\quad V^n_t=-Y^n_n\mbox{ if }t\ge n.
\]
Let $\delta\tilde Y=\tilde Y^m-\tilde Y^n$. By (\ref{eq5.3}),
\[
\delta\tilde Y_t=\delta \tilde
Y_0+K_t+\int^t_0(\fch_{[0,m\wedge\zeta]}(s) \,d\tilde
M^m_{s}-\fch_{[0,n\wedge\zeta]}(s) \,d\tilde M^n_{s}),\quad
t\ge0,\,P_{x}\mbox{-a.s.},
\]
where
\begin{align*}
K_t&=-\int^t_0\fch_{[0,n\wedge\zeta]}(s) (f(X_{s},\tilde
Y^m_{s})-f(X_{s},\tilde Y^n_{s}))\,ds
-\int^t_0\fch_{(n\wedge\zeta,m\wedge\zeta]}(s)
f(X_{s},\tilde Y^m_{s})\,ds\\
&\quad-\int^t_0\fch_{[0,n\wedge\zeta]}(s) (g(X_{s},\tilde Y^m_{s})
-g(X_{s},\tilde Y^n_{s}))\,dA^{\mu}_{s}\\
&\quad-\int^t_0\fch_{(n\wedge\zeta,m\wedge\zeta]}(s)
g(X_{s},\tilde Y^m_{s})\,dA^{\mu}_{s} +\int^t_0d(V^m_{s}-V^n_{s}).
\end{align*}
By the Meyer-Tanaka formula (see \cite[p. 216]{Pr}), for $t<m$ we
have
\[
|\delta\tilde Y_m|-|\delta\tilde Y_t|\ge\int^m_t\mbox{sgn}
(\delta\tilde Y_{s-})\,d(\delta\tilde Y)_{s},
\]
where $\mbox{sgn}(x)=1$ if $x>0$ and $\mbox{sgn}(x)=-1$ if
$x\le0$. Therefore, for $t<m$,
\[
|\delta\tilde Y_t|=E_x(|\delta\tilde Y_t|\,|\FF_t)\le
E_x\left(|\delta\tilde Y_m| -\int^m_t\mbox{sgn}(\delta\tilde
Y_{s-})\,dK_{s}\,\Bigl|\,\FF_t\right).
\]
From this it follows that for $t\in[0,m]$,
\begin{align*}
|\delta\tilde Y_t|&\le E_{x}\left(|\delta\tilde Y_m|
+\int_t^m\fch_{[0,n\wedge\zeta]}(s)\mbox{sgn}(\delta\tilde
Y_{s})(f(X_{s},\tilde Y^m_{s})-f(X_{s},\tilde Y^n_{s}))\,ds\right.\\
&\qquad\qquad +\int_t^m\fch_{[0,n\wedge\zeta]}(s)
\mbox{sgn}(\delta\tilde Y_{s})(g(X_{s},\tilde Y^m_{s})
-g(X_{s},\tilde Y^n_{s}))\,dA^{\mu}_{s}\\
&\qquad\qquad+\int^m_t\fch_{(n\wedge\zeta,m\wedge\zeta]}(s)
\mbox{sgn}(\delta Y_{s})f(X_{s},\tilde Y^m_{s})\,ds\\
&\qquad\qquad\left.+\int^m_t\fch_{(n\wedge\zeta,m\wedge\zeta]}(s)
\mbox{sgn}(\delta\tilde Y_{s}) g(X_{s},\tilde
Y^m_{s})\,dA^{\mu}_{s} +|V^m_m|+|V^n_n| \,\Bigl|\,\FF_t\right).
\end{align*}
By (E3),
\[
\int_t^m\fch_{[0,n\wedge\zeta]}(s)\mbox{sgn}(\delta \tilde
Y_{s})(f(X_{s},\tilde Y^m_{s})-f(X_{s},\tilde Y^n_{s}))\,ds\le0,
\]
whereas by (E5) and the fact that $A^{\mu}$ is increasing,
\[
\int_t^m\fch_{[0,n\wedge\zeta]}(s)\mbox{sgn}(\delta\tilde Y_{s})
(g(X_{s},\tilde Y^m_{s})-g(X_{s},\tilde Y^n_{s}))\,dA^{\mu}_{s}
\le0.
\]
Furthermore, since $\tilde Y^n_t=0$ for $t\ge n$, it follows from
(E3) that
\begin{align*}
\int^m_t\fch_{(n\wedge\zeta,m\wedge\zeta]}(s) \mbox{sgn}(\delta
\tilde Y_{s})f(X_{s},\tilde Y^m_{s})\,ds &\le
\int^m_t\fch_{(n\wedge\zeta,m\wedge\zeta]}(s)
\mbox{sgn}(\delta\tilde Y_{s})f(X_{s},0)\,ds\\
&\le \int^{m\wedge\zeta}_{n\wedge\zeta} |f(X_{s},0)|\,ds.
\end{align*}
Similarly, by (E5),
\[
\int^m_t\fch_{(n\wedge\zeta,m\wedge\zeta]}(s) \mbox{sgn}(\delta
\tilde Y_{s}) g(X_{s},\tilde Y^m_{s})\,dA^{\mu}_{s}
\le\int^{m\wedge\zeta}_{n\wedge\zeta} |g(X_{s},0)|\,dA^{\mu}_{s}.
\]
Furthermore, $\delta\tilde Y_m=0$ and
\[
|V^m_m|+|V^n_n|= |Y^m_m|+|Y^n_n|=\fch_{\{\zeta>m\}}|\varphi(X_m)|
+\fch_{\{\zeta>n\}}|\varphi(X_n)|.
\]
Therefore, for $t\in[0,m]$ we have
\begin{align}
\label{eq5.32} |\delta\tilde Y_t|&\le
E_{x}\Bigl(\fch_{\{\zeta>m\}}|\varphi(X_m)|
+\fch_{\{\zeta>n\}}|\varphi(X_n)|\nonumber\\
&\qquad\quad\left.+\int^{m\wedge\zeta}_{n\wedge\zeta} |f(X_{s},0)|\,ds
+\int^{m\wedge\zeta}_{n\wedge\zeta}
|g(X_{s},0)|\,dA^{\mu}_{s}\,\Bigl|\,\FF_t\right)=: N_t.
\end{align}
This implies (\ref{eq5.18}). By \cite[Lemma 6.1]{BDHPS},
\[
E_{x}\sup_{0\le t\le m}|\delta\tilde Y_t|^q
\le(1-q)^{-1}(E_{x}N_m)^q,
\]
which shows (\ref{eq5.19}).
Finally, to prove (\ref{eq4.8}), we first observe that by the
Meyer-Tanaka formula,
\[
E_x|Y^n_t|-E_x|Y^n_0|\ge
E_x\int^t_0\mbox{sgn}(Y^n_{s-})\,dY^n_s.
\]
By the above inequality and (\ref{eq4.9}), for $t<n$ we have
\begin{align}
\label{eq5.09} &E_x|Y^n_t|-E_x|Y^n_0|\nonumber\\
&\qquad\ge-E_x\int^t_0 \fch_{\{[0,n\wedge\zeta]}(s)
\mbox{sgn}(Y^n_{s})\{f(X_{s},Y^n_{s})\,ds +
g(X_{s},Y^n_{s})\,dA^{\mu}_{s}\}.
\end{align}
On the other hand, for every $t\ge0$,
\begin{align*}
\int^t_0|g(X_{s},Y^n_{s})|\,dA^{\mu}_s
&\le\int^t_0|g(X_{s},Y^n_{s})-g(X_{s},0)|\,dA^{\mu}_s
+\int^t_0|g(X_{s},0)|\,dA^{\mu}_s \\
&=-\int^t_0{\mbox{sgn}}(Y^n_{s})
(g(X_{s},Y^n_{s})-g(X_{s},0))\,dA^{\mu}_s
+\int^t_0|g(X_{s},0)|\,dA^{\mu}_s \\
&\le -\int^t_0{\mbox{sgn}}(Y^n_{s}) g(X_{s},Y^n_{s})\,dA^{\mu}_s
+2\int^t_0|g(X_{s},0)|\,dA^{\mu}_s,
\end{align*}
and similarly,
\[
\int^t_0|f(X_{s},Y^n_{s})|\,ds \le -\int^t_0{\mbox{sgn}}(Y^n_{s})
f(X_{s},Y^n_{s})\,ds +2\int^t_0|f(X_{s},0)|\,ds,
\]
which when combined with (\ref{eq5.09}) proves (\ref{eq4.8}).
\end{proof}

\begin{proposition}
\label{prop4.2} Assume that \mbox{\rm(E1)--(E6)} are satisfied and
\begin{equation}
\label{eq4.10}
\lim_{t\rightarrow\infty}E_x\fch_{\{\zeta>t\}}|\varphi(X_t)|=0.
\end{equation}
Assume also for some $x\in E$ for each  $n\in\BN$ there exists a
solution $(Y^n,M^n)$ of \mbox{\rm(\ref{eq4.24})} on the space
$(\Omega,\FF,P_x)$. If
\begin{equation}
\label{eq5.10} E_x\int^{\zeta}_{0}|f(X_{t},0)|\,dt
+E_x\int^{\zeta}_{0} |g(X_{t},0)|\,dA^{\mu}_{t}<\infty,
\end{equation}
then there exists a solution $(Y^x,M^x)$ of
\mbox{\rm(\ref{eq4.11})}  on $(\Omega,\FF,P_x)$. Moreover,
$Y^x\in\DD^q(P_x)$ for $q\in(0,1)$, $M^x$ is a uniformly
integrable $(\FF_t)$-martingale under $P_x$ and
\begin{align}
\label{eq5.28} &E_x\int^{\zeta}_0|f(X_t,Y^x_t)|\,dt
+E_x\int^{\zeta}_0|g(X_t,Y^x_t)|\,dA^{\mu}_t \nonumber\\
&\qquad\le 2E_x\left(\int^{\zeta}_0|f(X_t,0)|\,dt
+\int^{\zeta}_0|g(X_t,0)|\,dA^{\mu}_t\right).
\end{align}
Finally,
\begin{equation} \label{eq5.20}
\lim_{n\rightarrow\infty}\|Y^n-Y^x\|_{x,1}=0
\end{equation}
and for every $q\in(0,1)$,
\begin{equation}
\label{eq5.11}
\lim_{n\rightarrow\infty}E_x\sup_{t\ge0}|Y^n_t-Y^x_t|^q=0.
\end{equation}
\end{proposition}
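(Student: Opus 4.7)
The plan is to combine Proposition \ref{prop4.1} (which supplies Cauchy estimates for the sequence $\{\tilde Y^n\}$ defined by (\ref{eq5.02})) with completeness of $\LL^1(P_x)$ and $\DD^q(P_x)$, and then to pass to the limit in the BSDE (\ref{eq4.24}) using a localization argument mimicking Step~2 of the proof of Theorem \ref{th3.1}. First I would bound the right-hand side of (\ref{eq5.18}) by (\ref{eq4.10}) (which controls the two $\varphi$-terms) and the dominated convergence theorem applied to (\ref{eq5.10}) (which controls the two integral terms over $[n\wedge\zeta, m\wedge\zeta]$, since $n\wedge\zeta\uparrow\zeta$). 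This yields $\|\tilde Y^m-\tilde Y^n\|_{x,1}\to0$, and (\ref{eq5.19}) analogously gives $E_x\sup_{t\ge0}|\tilde Y^m_t-\tilde Y^n_t|^q\to0$. By completeness of $\LL^1(P_x)$ and $\DD^q(P_x)$ there exists $Y^x$ satisfying (\ref{eq5.20}) and (\ref{eq5.11}); the class D property of $Y^x$ under $P_x$ is inherited by the argument recorded in the paragraph immediately following the definition of $\LL^1(P_z)$ in Section \ref{sec3}.

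Next, to identify $Y^x$ as a solution of (\ref{eq4.11}), I would localize along stopping times analogous to those in the proof of Theorem \ref{th3.1}. Set
\[
R_t=E_x\Big(\int_0^{\zeta}|f(X_s,0)|\,ds+\int_0^{\zeta}|g(X_s,0)|\,dA^{\mu}_s\,\Big|\,\FF_t\Big),
\]
which is a uniformly integrable martingale by (\ref{eq5.10}), and for $k,N\in\BN$ define
\[
\tau_{k,N}=\inf\Big\{t\ge0:R_t\ge k\text{ or }\int_0^t(|f(X_s,-k)|+|f(X_s,k)|)\,ds+\int_0^t(|g(X_s,-k)|+|g(X_s,k)|)\,dA^{\mu}_s\ge N\Big\}\wedge\zeta.
\]
By (E4) and (E6) one has $\tau_{k,N}\uparrow\tau_k:=\inf\{t\ge0:R_t\ge k\}\wedge\zeta$ as $N\to\infty$, and $\tau_k\uparrow\zeta$ as $k\to\infty$. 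The Meyer--Tanaka computation leading to (\ref{eq4.8}) (together with (E3), (E5)) gives the pathwise bound $|Y^n_{t\wedge\tau_{k,N}}|\le k$ for every $n\ge$ (any time exceeding $\tau_{k,N}$), so on $[0,\tau_{k,N}]$ the integrands in (\ref{eq4.24}) are dominated by the $P_x$-integrable envelopes from the definition of $\tau_{k,N}$. Conditioning (\ref{eq4.24}) on $\FF_t$ for $n$ large enough that $n\wedge\zeta\ge\tau_{k,N}$ $P_x$-a.s., and passing to the limit with the aid of (E2), (\ref{eq5.11}) and dominated convergence (exactly as in (\ref{eq3.23})--(\ref{eq3.27}) of Theorem \ref{th3.1}) yields
\[
Y^x_{t\wedge\tau_{k,N}}=E_x\Big(Y^x_{\tau_{k,N}}+\int_{t\wedge\tau_{k,N}}^{\tau_{k,N}}\{f(X_s,Y^x_s)\,ds+g(X_s,Y^x_s)\,dA^{\mu}_s\}\,\Big|\,\FF_t\Big).
\]

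Finally, applying Fatou's lemma to (\ref{eq4.8}) together with (\ref{eq5.11}) and (\ref{eq4.10}) yields (\ref{eq5.28}); this integrability justifies letting $N\to\infty$ and then $k\to\infty$ in the displayed identity to obtain the Markovian representation
\[
Y^x_t=E_x\Big(\int_{t\wedge\zeta}^{\zeta}\{f(X_s,Y^x_s)\,ds+g(X_s,Y^x_s)\,dA^{\mu}_s\}\,\Big|\,\FF_t\Big),\qquad t\ge0.
\]
Defining $M^x$ as a c\`adl\`ag version of the corresponding closed martingale produces the BSDE (\ref{eq4.11}); closedness gives uniform integrability of $M^x$, while sending $t\to\infty$ in the last display (using (\ref{eq5.28}) and dominated convergence) gives $Y^x_{t\wedge\zeta}\to0$ $P_x$-a.s.

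The main obstacle is the passage to the limit in the nonlinear integrals $\int f(X_s,Y^n_s)\,ds$ and $\int g(X_s,Y^n_s)\,dA^{\mu}_s$: outside the stopping times $\tau_{k,N}$ one has neither a pathwise bound on $Y^n$ nor an integrable dominant, so the localization is essential, and care is required in the subsequent double limit $N\to\infty$, $k\to\infty$ to ensure convergence of the cut-off integrals to the full integrals on $[0,\zeta]$ --- this is where (\ref{eq5.28}) (obtained by Fatou applied to (\ref{eq4.8})) plays the decisive role.
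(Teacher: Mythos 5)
Your overall architecture is the paper's: Proposition \ref{prop4.1} together with (\ref{eq4.10}), (\ref{eq5.10}) and completeness of $\LL^1(P_x)$ and $\DD^q(P_x)$ produce the limit $Y^x$ with (\ref{eq5.20}), (\ref{eq5.11}); Fatou applied to (\ref{eq4.8}) gives (\ref{eq5.28}); and a localization is used to pass to the limit in the nonlinear integrals. The localization step, however, has two concrete defects as written. First, the claimed pathwise bound $|Y^n_{t\wedge\tau_{k,N}}|\le k$ does not follow from the Meyer--Tanaka computation with your choice of $R_t$: for the BSDE (\ref{eq4.24}) that computation yields
\[
|Y^n_t|\le E_x\Big(\fch_{\{\zeta>n\}}|\varphi(X_n)|+\int_{t}^{n\wedge\zeta}|f(X_s,0)|\,ds+\int_{t}^{n\wedge\zeta}|g(X_s,0)|\,dA^{\mu}_s\,\Big|\,\FF_t\Big),
\]
and the conditional expectation of the terminal term $\fch_{\{\zeta>n\}}|\varphi(X_n)|$ depends on $n$ and is not dominated by your $R_t$; assumption (\ref{eq4.10}) only controls its expectation at $t=0$, not the running maximum of the associated martingale. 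This is exactly why the paper abandons the dominating-martingale device of Theorem \ref{th3.1} in the elliptic setting and instead localizes with $\tau_{n,R}=\inf\{t\ge0:|Y^n_t|>R\}$ and $\tau_R=\inf_{n\ge R}\tau_{n,R}$, proving separately, by a contradiction argument based on (\ref{eq5.11}), that $\tau_R\nearrow\infty$ $P_x$-a.s.

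Second, the step ``conditioning (\ref{eq4.24}) on $\FF_t$ for $n$ large enough that $n\wedge\zeta\ge\tau_{k,N}$ $P_x$-a.s.'' is not available: $\zeta$ may be infinite, so $\tau_{k,N}$ need not be bounded by any deterministic $n$, and no single $n$ makes (\ref{eq4.24}) valid up to $\tau_{k,N}$. One must first truncate at a deterministic time $T<n$, obtain the conditional identity with terminal term $Y^x_{T\wedge\zeta\wedge\tau_{k,N}}$ (your displayed identity omits this truncation), and only then send $N\to\infty$, $R\to\infty$ and finally $T\to\infty$; the last limit uses $E_x|Y^x_{T\wedge\zeta}|\to0$, which comes from $Y^n_\zeta=0$, (\ref{eq5.11}) and the class~D property rather than from the closing representation. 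Both defects are repairable along the paper's lines, but as written the passage to the limit in the nonlinear integrals --- which you yourself identify as the crux --- is not justified.
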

\begin{proof}
From (\ref{eq5.18}) and (\ref{eq4.10}), (\ref{eq5.10}) it follows
that for every $x\in E$, $ \|Y^n-Y^m\|_{x,1}\rightarrow0$ as
$n,m\rightarrow\infty$. Hence there exists a process
$Y\in\LL^1(P_x)$ of class D such that (\ref{eq5.20}) is satisfied.
By (\ref{eq5.19}), (\ref{eq4.10}) and (\ref{eq5.10}),
$\lim_{n,m\rightarrow\infty}E_x\sup_{t\ge0}|Y^n_t-Y^m_t|^q
\rightarrow0$.  Since the space $\DD^q(P_x)$ is complete, the last
convergence and (\ref{eq5.20}) imply that $Y^x\in\DD^q(P_x)$ and
(\ref{eq5.11}) is satisfied. From (\ref{eq4.8}), (\ref{eq5.20}),
(\ref{eq5.11}) and Fatou's lemma it follows that for every $T>0$,
\begin{align*}
& E_x\int_0^{T\wedge\zeta}|f(X_{t},Y^x_{t})|\,dt
+E_x\int^{T\wedge\zeta}_0|g(X_t,Y^x_t)|\,dA^{\mu}_t\\
&\qquad\le 2E_x\left(|Y^x_{T\wedge\zeta}|
+\int_0^{T\wedge\zeta}|f(X_{t},0)|\,ds +\int^{T\wedge\zeta}_0
|g(X_t,0)|\,dA^{\mu}_{t}\right).
\end{align*}
Since $\fch_{\{n\ge\zeta\}}Y^n_{\zeta}=0$ $P_x$-a.s. for $n\in\BN$, from
(\ref{eq5.11}) we conclude that $Y^x_{T\wedge\zeta}\rightarrow0$
in probability $P_x$ as $T\rightarrow\infty$. As a consequence,
since $Y^x$ is of class D, $E_x|Y^x_{T\wedge\zeta}|\rightarrow0$.
Therefore letting $T\rightarrow\infty$ in the last inequality we
get (\ref{eq5.28}). Using  (\ref{eq5.11}) one can show that
$\int^{\zeta}_0 |g(X_{t},Y^n_{t})-g(X_{t},Y^x_{t})|
\,dA^{\mu}_{t}\rightarrow0$ in probability $P_x$ (see the proof of
\cite[(6.16)]{K:JEE}).
Set $F_R(t,x)=|f(t,x,-R)|\vee|f(t,x,R)|$,
$G_R(t,x)=|g(t,x,-R)|\vee|g(t,x,R)|$ and for $N,R>0$ and $n\in\BN$
define the stoping times
\[
\tau_{n,R}=\inf\{t\ge0:|Y^n_t|>R\},\qquad \tau_R=\inf_{n\ge R}
\tau_{n,R}
\]
and
\[
\sigma_{N,R}=\inf\left\{t\ge0:\int^t_0(F_R(X_s)\,ds+G_R(X_s)\,dA^{\mu}_s)>N\right\},
\qquad \delta_{N,R}=\sigma_{N,R}\wedge\tau_R.
\]
By (\ref{eq4.24}), for $T<n$ we have
\begin{align*}
Y^n_{t\wedge\zeta\wedge\delta_{N,R}}
&=Y^n_{T\wedge\zeta\wedge\delta_{N,R}}
+\int^{T\wedge\zeta\wedge\delta_{N,R}}
_{t\wedge\zeta\wedge\delta_{N,R}} \{f(X_{s},Y^n_{s})\,ds
+g(X_{s},Y^n_{s})\,dA^{\mu}_{s}\}\\
&\quad-\int^{T\wedge\zeta\wedge\delta_{N,R}}
_{t\wedge\zeta\wedge\delta_{N,R}}dM^n_{s},\quad t\in[0,T],\quad
P_{x}\mbox{-a.s.}
\end{align*}
Since $Y^n_t=Y^n_{t\wedge\zeta}$ and
$\int^{T\wedge\zeta\wedge\delta_{N,R}}
_{t\wedge\zeta\wedge\delta_{N,R}}dM^n_{s}
=\int^T_tdM^n_{s\wedge\zeta\wedge\delta_{N,R}}$ and the martingale
$M^n$ stopped at $\zeta\wedge\delta_{N,R}$ is still a martingale
(see \cite[Theorem I.18]{Pr}), it follows that
\begin{equation}
\label{eq5.21} Y^n_{t\wedge\zeta\wedge\delta_{N,R}}
=E_x\left(Y^n_{T\wedge\zeta\wedge\delta_{N,R}}
+\int^{T\wedge\zeta\wedge\delta_{N,R}}
_{t\wedge\zeta\wedge\delta_{N,R}} \{f(X_{s},Y^n_{s})\,ds
+g(X_{s},Y^n_{s})\,dA^{\mu}_{s}\}\,\Bigl|\,\FF_t\right).
\end{equation}
By Doob's inequality (see, e.g., \cite[Theorem 1.9.1]{LS}) and
(\ref{eq5.20}), for every $\varepsilon>0$ we have
\begin{align}
\label{eq5.24} &\lim_{n\rightarrow\infty}P_x(\sup_{t\le T}
|E_x(Y^n_{T\wedge\zeta\wedge\delta_{N,R}}
-Y^x_{T\wedge\zeta\wedge\delta_{N,R}}\,|\FF_t\,)|>\varepsilon) \nonumber\\
&\qquad\le \varepsilon^{-1}\lim_{n\rightarrow\infty}
E_x|Y^n_{T\wedge\zeta\wedge\delta_{N,R}}
-Y^x_{T\wedge\zeta\wedge\delta_{N,R}}|=0.
\end{align}
From the definition of  $\delta_{N,R}$ and (E2), (\ref{eq5.11}) it
follows that
\[
\lim_{n\rightarrow\infty}E_x\int^{T\wedge\zeta\wedge\delta_{N,R}}_0
\{|f(X_{s},Y^n_{s})-f(X_{s},Y^x_{s})|\,ds
+|g(X_{s},Y^n_{s})-g(X_{s},Y^x_{s})|\,dA^{\mu}_s\}=0.
\]
Hence, by Doob's inequality (see, e.g., \cite[Theorem 1.9.1]{LS}),
\begin{equation}
\label{eq5.29} \lim_{n\rightarrow\infty}P_x\left(\sup_{t\le T}
\left|E_x\left(\int^{T\wedge\zeta\wedge\delta_{N,R}}_
{t\wedge\zeta\wedge\delta_{N,R}}
(f(X_{s},Y^n_{s})-f(X_{s},Y^x_{s}))\,ds\,\Bigl|\,\FF_t\right)\right|
>\varepsilon\right)=0
\end{equation}
and
\begin{equation}
\label{eq5.30} \lim_{n\rightarrow\infty}P_x\left(\sup_{t\le T}
\left|E_x\left(\int^{T\wedge\zeta\wedge\delta_{N,R}}_
{t\wedge\zeta\wedge\delta_{N,R}} (g(X_{s},Y^n_{s})
-g(X_{s},Y^x_{s}))\,dA^{\mu}_{s}\,\Bigl|\,\FF_t\right)\right|
>\varepsilon\right)=0
\end{equation}
for every $\varepsilon>0$. Letting $n\rightarrow\infty$ in
(\ref{eq5.21})  and using (\ref{eq5.11}) and
(\ref{eq5.24})--(\ref{eq5.30}) we conclude that $P_x$-a.s.
\begin{equation}
\label{eq2.25} Y^x_{t\wedge\delta_{N,R}}
=E_x\left(Y^x_{T\wedge\zeta\wedge\delta_{N,R}}
+\int^{T\wedge\zeta\wedge\delta_{N,R}}_{t\wedge\zeta\wedge\delta_{N,R}}
\{f(X_{s},Y^x_{s})\,ds+g(X_{s},Y^x_{s})\,dA^{\mu}_{s}\} \,\Bigl|\,\FF_t\right)
\end{equation}
for $t\in[0,T]$. By (E4), $F_R\in qL^1(E_{0,T};m_1)$, and by (E6),
$G_R\in qL^1(E_{0,T};\mu)$. Therefore
$\sigma_{N,R}\nearrow\tau_R$ $P_x$-a.s. as $N\rightarrow\infty$
for each fixed $R>0$. Hence
$Y^x_{T\wedge\zeta\wedge\delta_{N,R}}\rightarrow
Y^x_{T\wedge\zeta\wedge\tau_R}$ $P_x$-a.s. as
$N\rightarrow\infty$, and consequently
$E_x|Y^x_{T\wedge\zeta\wedge\delta_{N,R}}-
Y^x_{T\wedge\zeta\wedge\tau_R}|\rightarrow0$ since  $Y^x$ is of
class D. From the last convergence and Doob's inequality it
follows that for every $\varepsilon>0$,
\[
\lim_{N\rightarrow\infty}P_x(\sup_{t\le T}
|E_x(Y^x_{T\wedge\zeta\wedge\delta_{N,R}}-Y^x_{T\wedge\zeta\wedge\tau_R}
|\FF_t)|>\varepsilon)=0.
\]
Therefore letting $N\rightarrow\infty$ in (\ref{eq2.25}) and using
(\ref{eq5.28})  we show that $P_x$-a.s.,
\begin{equation}
\label{eq5.26} Y^x_t=E_x\left(Y^x_{T\wedge\zeta\wedge\tau_R}
+\int^{T\wedge\zeta\wedge\tau_R}_{t\wedge\zeta\wedge\tau_R}
\{f(X_{s},Y^x_{s})\,ds+g(X_{s},Y^x_{s})\,dA^{\mu}_{s}\}\,\Bigl|\,\FF_t\right),
\quad t\in[0,T].
\end{equation}
We now show that $\tau_R\nearrow\infty$ $P_x$-a.s. as
$R\rightarrow\infty$. To see this, let us suppose that
$P_x(\sup_{R>0}\tau_R\le M)>\varepsilon$ for some
$M,\varepsilon>0$. Then
\begin{equation}
\label{eq4.25} P_x(\forall_{R>0}\,\,\, \sup_{n\ge R}\sup_{t\le
M}|Y^n_t|\ge R)>\varepsilon.
\end{equation}
Clearly,
\begin{align}
\label{eq4.26} P_x(\forall_{R>0}\,\,\, \sup_{n\ge R}\sup_{t\le M}
|Y^n_t|\ge R) &\le P_x(\forall_{R>0}\,\,\, \sup_{n\ge R}\sup_{t\le
M}|Y^n_t-Y_t|\ge
R/2)\nonumber\\
&\quad + P_x(\forall_{R>0}\,\,\,\sup_{t\le M}|Y_t|\ge R/2)\nonumber\\
&=P(\forall_{R>0}\,\,\, \sup_{n\ge R}\sup_{t\le M}|Y^n_t-Y_t|\ge
R/2).
\end{align}
By (\ref{eq5.11}), taking a subsequence if necessary, we may
assume that $\sup_{t\le M}|Y^n_t-Y_t|\rightarrow 0$ $P_x$-a.s.
Therefore the random  variable $Z=\sup_{n\ge 0}\sup_{t\le
M}|Y^n_t-Y_t|$ is finite a.s., which when combined with
(\ref{eq4.26}) contradicts (\ref{eq4.25}). This proves that
$\tau_R\nearrow\infty$ $P_x$-a.s. Now, letting
$R\rightarrow\infty$ and repeating argument used to prove
(\ref{eq5.26}), we get (\ref{eq5.26}) with
$T\wedge\zeta\wedge\tau_R$ replaced by $T\wedge\zeta$. Since we
know that $E_x|Y^x_{T\wedge\zeta}|\rightarrow0$ as
$T\rightarrow\infty$, letting $T\rightarrow\infty$ in this
equation (i.e. in (\ref{eq5.26}) with $T\wedge\zeta$) and
repeating once again the argument used to prove (\ref{eq5.26}) we
get
\begin{equation}
\label{eq5.17} Y^x_t=E_x\left(\int^{\zeta}_{t\wedge\zeta}
\{f(X_{s},Y^x_{s})\,ds +g(X_{s},Y^x_{s})\,dA^{\mu}_{s}\}
\,\Bigl|\,\FF_t\right),\quad t\ge0,\quad P_x\mbox{-a.s.}
\end{equation}
Hence
\begin{equation}
\label{eq5.23} Y^x_t=\int^{\zeta}_{t\wedge\zeta}
f(X_{s},Y^x_{s})\,ds +\int^{\zeta}_{t\wedge\zeta}
g(X_{s},Y^x_{s})\,dA^{\mu}_{s}
-\int^{\zeta}_{t\wedge\zeta}dM^x_{s},\quad t\ge0,\quad
P_x\mbox{-a.s.},
\end{equation}
where $M^x$ is a c\`adl\`ag version of the martingale
\begin{equation}
\label{eq4.31} t\mapsto E_x\left(\int^{\zeta}_0
f(X_{s},Y^x_{s})\,ds +\int^{\zeta}_0
g(X_{s},Y^x_{s})\,dA^{\mu}_{s}\,\Bigl|\,\FF_t\right)-Y^x_0.
\end{equation}
Indeed, by (\ref{eq5.17}),
\begin{align*}
Y^x_t&=E_x\left(\int^{\zeta}_0 f(X_{s},Y^x_{s})\,ds +\int^{\zeta}_0
g(X_{s},Y^x_{s})\,dA^{\mu}_{s}\,\Bigl|\,\FF_t\right)\\
&\quad-\int^{t\wedge\zeta}_0f(X_{s},Y^x_{s})\,ds
-\int^{t\wedge\zeta}_0 g(X_{s},Y^x_{s})\,dA^{\mu}_{s},\quad t\ge0,
\end{align*}
that is
\[
Y^x_t=Y^x_0+M^x_t-\int^{t\wedge\zeta}_0
f(X_{s},Y^x_{s})\,ds-\int^{t\wedge\zeta}_0
g(X_{s},Y^x_{s})\,dA^{\mu}_{s},\quad t\ge0.
\]
From the above it follows that $M^x_{t\wedge\zeta}=M^x_t$,
$t\ge0$, and moreover, that
\[
Y^x_t=Y_{T\wedge\zeta}
+\int^{T\wedge\zeta}_{t\wedge\zeta}f(X_s,Y^x_s)\,ds
+\int^{T\wedge\zeta}_{t\wedge\zeta} g(X_s,Y^x_s)\,dA^{\mu}_s
-\int^{T\wedge\zeta}_{t\wedge\zeta}dM^x_s,\quad t\ge0.
\]
Letting $T\rightarrow\infty$ and using the fact that
$Y^x_{T\wedge\zeta}\rightarrow Y^x_{\zeta}=0$ $P_x$-a.s. we
obtain (\ref{eq5.23}). Thus the pair $(Y^x,M^x)$ is a solution of
(\ref{eq4.11}).
\end{proof}

\begin{theorem}
\label{th4.3} Assume \mbox{\rm(\ref{eq5.12})} and assume that
$f,g,\mu$ do not depend on time and satisfy \mbox{\rm(E1)--(E6)}.
\begin{enumerate}
\item[\rm(i)]There exists a unique solution $v$ of problem
\mbox{\rm(\ref{eq1.2})} with $\lambda=0$.
\item[\rm(ii)]
Let
\[
M^x_t=E_x\left(\int_{0}^{\zeta} f_v(X_r)\,dr +\int_{0}^{\zeta}
g_v(X_r)\,dA^{\mu}_r\,\Bigl|\,\FF_{t}\right)-v(X_0),\quad t\ge0.
\]
Then there is a c\`adl\`ag $(\FF_t)$-adapted process $M$ such that
$M_t=M^x_t$, $t\ge0$, $P_x$-a.s. for q.e $x\in E$ and for q.e.
$x\in E$ the pair $(v(X),M)$ is a unique solution of
\mbox{\rm(\ref{eq4.11})} on the space $(\Omega,\FF,P_x)$.
Moreover,  $v(X)\in\DD^q(P_x)$ for $q\in(0,1)$ and $M$ is a
uniformly integrable martingale under $P_x$ for q.e. $x\in E$.
\end{enumerate}
\end{theorem}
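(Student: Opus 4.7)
My strategy is to obtain $v$ as the large-$n$ limit of the parabolic solutions on $[0,n]$ produced by Theorem \ref{th3.1}, and then feed the associated BSDEs into Proposition \ref{prop4.2}. First, for each $n\in\BN$ I would apply Theorem \ref{th3.1} with $T=n$, time-independent data $f,g$, measure $\mu=dt\otimes\tilde\mu$---which lies in $\RR^+(E_{0,n})$ by Lemma \ref{lem2.1}(ii) together with $\tilde\mu\in\RR^+(E)$---and terminal datum $\varphi\equiv 0$ (taking $\varphi=0$ costs nothing, since the elliptic problem \eqref{eq3.7} involves no initial/terminal datum and since \eqref{eq4.10} then becomes automatic). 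This yields, for each $n$, a quasi-continuous $u_n:E_{0,n}\to\BR$ and a c\`adl\`ag martingale $M^n$ such that, under $P_x\equiv P_{0,x}$, the pair $(Y^n,M^n)$ with $Y^n_t:=u_n(t,X_t)$ solves the auxiliary BSDE \eqref{eq4.24} for q.e.\ $x\in E$.

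Next I would verify the hypotheses of Proposition \ref{prop4.2}: \eqref{eq5.10} is a restatement of (E4) and (E6), while \eqref{eq4.10} is trivial for $\varphi=0$. Proposition \ref{prop4.2} then produces, for q.e.\ $x$, a pair $(Y^x,M^x)$ solving the elliptic BSDE \eqref{eq4.11}, together with $Y^x\in\DD^q(P_x)$, uniform integrability of $M^x$, the convergences \eqref{eq5.20}--\eqref{eq5.11}, and the a priori bound \eqref{eq5.28}. By the time-homogeneity granted by \eqref{eq5.12} and Lemma \ref{lem2.1}(i), one has a shift identity $u_n(t,x)=w(n-t,x)$ for a single function $w$ (the solution of the forward parabolic problem with zero initial condition), so that $v_n(x):=u_n(0,x)=w(n,x)$ converges q.e.\ to a limit $v$ by a Cauchy estimate analogous to Proposition \ref{prop4.1}. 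Combining $Y^n_t=w(n-t,X_t)$ with $Y^n\to Y^x$ in $\LL^1(P_x)$ gives the Markov identification $Y^x_t=v(X_t)$ for $t\in[0,\zeta)$, $P_x$-a.s., for q.e.\ $x$; in particular $Y^x_0=v(x)$ is deterministic.

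With the Markov identification in hand, taking $P_x$-expectation in \eqref{eq4.11} at $t=0$ (using $M^x_0=0$ together with the uniform integrability of $M^x$) yields $v(x)=E_x[\int_0^\zeta f_v(X_s)\,ds+\int_0^\zeta g_v(X_s)\,dA^\mu_s]$, while \eqref{eq5.28} ensures $f_v\cdot m,\,g_v\cdot\tilde\mu\in\RR(E)$; hence $v$ solves \eqref{eq3.7}. The c\`adl\`ag $(\FF_t)$-adapted process $M$ gluing the family $(M^x)_x$ is then obtained as in \cite[Remark 3.6]{KR:JFA}, exactly as at the end of the proof of Theorem \ref{th3.1}(ii). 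Uniqueness of both $v$ and the BSDE solution follows from the standard Meyer--Tanaka argument under (E3), (E5), $\mu\ge 0$, in the same spirit as Step 1 of the proof of Theorem \ref{th3.1}.

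The main difficulty I expect lies in this Markov identification step: tracking exceptional sets carefully when passing from the quasi-everywhere parabolic identities $Y^n_t=u_n(t,X_t)$ through the merely $P_x$-a.s.\ limits of Proposition \ref{prop4.2} to a genuine representation $Y^x_t=v(X_t)$ valid for q.e.\ $x$ and all $t\ge 0$ simultaneously, and in proving pointwise q.e.\ convergence $v_n\to v$ rather than only a distributional one. The time-shift structure $u_n(t,x)=w(n-t,x)$ forced by \eqref{eq5.12} is the key tool, as it reduces the problem to the single convergence $w(s,\cdot)\to v$ as $s\to\infty$, which should follow from an analogue of the Cauchy estimate \eqref{eq5.18} applied at the level of the functions $v_n$ themselves.
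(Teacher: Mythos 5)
Your proposal follows essentially the same route as the paper: approximate by the finite-horizon BSDEs of Theorem \ref{th3.1} with zero terminal datum, pass to the limit via Proposition \ref{prop4.2} (checking \eqref{eq4.10} trivially and \eqref{eq5.10} from the $\RR(E)$-parts of (E4), (E6)), identify the limit as $v(X)$, and deduce part (i) by taking expectations at $t=0$ using \eqref{eq5.28}. The only cosmetic difference is in the identification step, where the paper sets $v(x)=E_xY_0$ and invokes \cite[Theorem 4.7]{KR:JFA} while you exploit the time-shift structure $u_n(t,x)=w(n-t,x)$; both rest on the same homogeneity granted by \eqref{eq5.12}.
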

\begin{proof}
We  first prove part (ii). The uniqueness of a solution of
(\ref{eq4.11}) follows easily from (E3), (E5) and the fact that
$\mu$ is positive. To see this it suffices to modify slightly the
proof of \cite[Proposition 3.1]{KR:JFA}.  To prove the existence
of a solution, we first note that by Theorem \ref{th3.1}, for q.e.
$x\in E$ for every $n\in\BN$ there exists a unique solution
$(Y^{n},M^{n})$ of the BSDE (\ref{eq4.24}) with $\varphi\equiv0$
on the space $(\Omega,\FF,P_x)$. Since $f(\cdot,0)\cdot
m,g(\cdot,0)\cdot\tilde\mu\in\RR(E)$, condition (\ref{eq5.10})  is
satisfied for q.e. $x\in E$. Therefore, by Proposition
\ref{prop4.2}, for q.e. $x\in E$ there exist a solution
$(Y^x,\tilde M^x)$ of BSDE (\ref{eq4.11}). In fact, $Y^x$ is given
by (\ref{eq5.17}) and $\tilde M^x$ is a c\`adl\`ag version of the
martingale (\ref{eq4.31}). Repeating step by step the proof of
\cite[Theorem 4.7]{KR:JFA} one can show that there is a pair of
c\`adl\`ag processes $(Y,M)$ not depending on $x$ such that
$(Y_t,M_t)=(Y^x_t,\tilde M^x_t)$, $t\ge0$, $P_x$-a.s. for q.e.
$x\in E$, and secondly, that in fact $Y=v(X)$, where
$v(x)=E_xY_0$. This shows that the pair $(v(X),M)$ is a solution
of (\ref{eq4.11}) on the space $(\Omega,\FF,P_x)$ for q.e. $x\in
E$. By Proposition \ref{prop4.2}, $v(X)\in\DD^q(P_x)$ for
$q\in(0,1)$, and $M$ is a uniformly integrable $(\FF_t)$-martingale
under $P_x$. This completes the proof of (ii). Part (i) follows
immediately from (ii), because $g_v\cdot\mu\in\RR(E)$ and
(\ref{eq5.28}) is satisfied with $Y^x$ replaced by $v(X)$, so for
q.e. $x\in E$ we can integrate with respect to $P_x$ both sides of
(\ref{eq4.11}) with $t=0$ and $Y^x$ replaced by $v(X)$.
\end{proof}

\begin{remark}
If $g$ does not depend on the last variable $y$, then in Theorem
\ref{th4.3} we may replace the assumptions $\tilde\mu\in\RR^+(E)$,
$g(\cdot,0)\cdot\tilde\mu\in\RR(E)$ by the assumption
$g\cdot\tilde\mu\in\RR(E)$ (see \cite[Theorem 3.8]{KR:CM}).
\end{remark}

\begin{remark}
(i) By \cite[Lemma 4.3]{KR:JFA}, the solution $v$ of (\ref{eq1.2})
appearing in Theorem \ref{th4.3} is quasi-continuous. \smallskip\\
(ii) In addition to the hypotheses of Theorem \ref{th4.3} let us
assume that $(B,V)$ is a transient Dirichlet form and $f(\cdot,0)\in
L^1(E,m)$, $g(\cdot,0)\cdot\tilde\mu\in\MM_b(E)$, where
$\tilde\mu$ is determined by (\ref{eq2.6}). Then by
(\ref{eq5.28}), Lemma \ref{lem2.1}, the fact that $Y^x_t=v(X_t)$,
$t\ge0$, $P_x$-a.s. and \cite[Lemma 2.6]{KR:CM} (see also
\cite[Lemma 5.4] {KR:JFA}),
\[
\|f_v\|_{L^1(E;m)}+\|g_v\cdot\tilde\mu\|_{TV}\le
\|f(\cdot,0)\|_{L^1(E;m)}+\|g(0,\cdot)\cdot\tilde\mu\|_{TV}.
\]
Therefore, by  \cite[Theorem 4.2]{KR:CM} (see also
\cite[Proposition 5.9]{KR:JFA}), $f_v\in L^1(E;m)$, $T_kv$ belongs
to the extended Dirichlet space $V_e$ and for every $k>0$,
\[
B(T_kv,T_kv)\le k(\|f(\cdot,0)\|_{L^1(E;m)}
+\|g(0,\cdot)\cdot\tilde\mu\|_{TV}).
\]
Moreover, if $(B,V)$ is a (non-symmetric) Dirichlet form
satisfying the strong sector condition, then by \cite[Theorem
3.5]{KR:NoD}, $v$ is a renormalized solution of problem
(\ref{eq1.2}) in the sense defined in \cite{KR:NoD}.
\end{remark}

\begin{remark}
\label{rem4.5} If a family $\{B^{(t)},t\in\BR\}$ satisfies the
assumptions of Section \ref{sec2}, then for every $\lambda>0$ the
family $\{B^{(t)}_{\lambda},t\in\BR\}$, where
$B^{(t)}_{\lambda}(\varphi,\psi)
=B^{(t)}(\varphi,\psi)+\lambda(\varphi,\psi)_H$,
satisfies these  assumptions as well. Therefore all the results of
Sections \ref{sec3} and \ref{sec4} apply to the operators
associated with $B^{(t)}_{\lambda}$ and to the Markov process
associated with the form $\EE_{\lambda}$ defined by
(\ref{eq2.23}), (\ref{eq2.24}) but with $B^{(t)}$ replaced by
$B^{(t)}_{\lambda}$.
\end{remark}

\section{Large time asymptotics}
\label{sec5}

In this section, as in Section \ref{sec4}, we  assume that
(\ref{eq5.12}) is satisfied  and the data $f,g,\mu$ do not depend
on time. We denote by $L$  the operator corresponding to $(B,V)$.
We continue to write $P_{x}$ for $P_{0,x}$ and $E_{x}$ for
$E_{0,x}$,  and as in Section \ref{sec4}, the abbreviation ``q.e."
means quasi-everywhere with respect to the capacity determined by
$(B,V)$.

Suppose that for every $T>0$ there  exists a unique solution $u_T$
of (\ref{eq3.2}) with $L$ and the data $f,g,\mu$ satisfying the
above assumptions. By Remark \ref{rem3.1}, by putting
\[
u(t,x)=\bar u_T(t,x)=u_T(T-t,x),\quad t\in[0,T],\, x\in E,
\]
we define a probabilistic solution $u$ of (\ref{eq1.1}), i.e.
solution of the problem
\begin{equation}
\label{eq5.37} \left\{\begin{array}{l}
\partial_tu-Lu=f(x,u)+g(x,u)\cdot\mu
\quad\mbox{in }(0,\infty)\times E,
\medskip\\
u(0,\cdot)=\varphi\quad\mbox{on }E.
\end{array}
\right.
\end{equation}
Our goal is to prove that under suitable assumptions,
$u(t,x)\rightarrow v(x)$ as $t\rightarrow\infty$ for q.e. $x\in
E$, where $v$ is a solution of (\ref{eq1.2}) with $\lambda=0$, i.e. solution of the
problem
\begin{equation}
\label{eq5.38} -Lv=f(x,v)+g(x,v)\cdot\tilde\mu\quad \mbox{in } E,
\end{equation}
where $\tilde\mu$ is determined by (\ref{eq2.6}). We will also estimate the rate of the convergence. The proofs of
these results rely on the results of Section \ref{sec4}. The main
idea is as follows. We have
\begin{equation}
\label{eq5.1} u(t,x)=u_T(T-t,x),\quad t\in[0,T],\, x\in E,
\end{equation}
where $u_T$ is a solution of the problem
\begin{equation}
\label{eq5.39} \partial_tu_T+Lu_T=-f(x,u_T) -g(x,u_T),\qquad
u_T(T)=\varphi.
\end{equation}
In particular, putting $t=T$, we get $u(T,x)=u_T(0,x)$. Hence, by
(\ref{eq4.1}),
\begin{equation}
\label{eq5.4} u(T,x)=E_{x}\left(\varphi(\BBX_{T\wedge\zeta})
+\int_{0}^{T\wedge\zeta}f_{u_T} (\BBX_{t})\,dt
+\int_{0}^{T\wedge\zeta}g_{u_T}(\BBX_{t})\,dA_{t}^{\mu}\right),
\end{equation}
because $\zeta_{\tau}=T\wedge\zeta$ under the measure $P_x$. On
the other hand, by Lemma \ref{lem2.1},
\begin{equation}
\label{eq5.41} v(x)=E_{x}\left(\int_{0}^{\zeta}f_v(X_t)\,dt
+\int_{0}^{\zeta}g_v(X_t)\,dA^{0,\tilde\mu}_t\right).
\end{equation}
Therefore our problem reduces to showing that the right-hand side
of (\ref{eq5.4}) converges to the right-hand side of
(\ref{eq5.41}) as $T\rightarrow\infty$, and to estimating the
difference between the two expressions  by some function of $T$.

In what follows, we denote by $(P_t)_{t\ge0}$, $(R_{\alpha})_{\alpha>0}$
the semigroup and the resolvent associated with the process
$\BM^{(0)}=(X,P_x)$ with life time $\zeta^0=\zeta$ (see Section
\ref{sec2.2}), i.e.
\[
P_tf(x)=E_xf(X_t),\qquad
R_{\alpha}f(x)=E_x\int^{\infty}_0e^{-\alpha t}f(X_t)\,dt,\quad
x\in E,f\in\BB_b(E).
\]
For $\nu\in \RR(E)$, we  set
\[
R_{\alpha}\nu(x)=E_x\int^{\zeta}_0e^{-\alpha t}\,dA^{0,\nu}_t
=E_x\int^{\infty}_0e^{-\alpha t}\,dA^{0,\nu}_t,
\]
where $A^{0,\nu}$ is the continuous AF of $\BM^{(0)}$ associated
with $\nu$ in the Revuz sense. Note that if $(B,V)$ is transient,
then $R_{\alpha}\nu$ is defined for $\alpha=0$.

Before stating our main result, let us note that with the
convention made at the beginning of Section \ref{sec2.2},
$E_x\fch_{\{\zeta>t\}}\psi(X_t)=P_t\psi(x)$ for Borel measurable $\psi\in
L^1(E;m)$, $t\ge0$. Therefore (\ref{eq4.10}) is equivalent to
\begin{equation}
\label{eq5.45} \lim_{t\rightarrow\infty}P_t|\varphi|(x)=0.
\end{equation}
Clearly, assumption (\ref{eq5.10}) is equivalent to
\begin{equation}
\label{eq5.46} R_0|f(\cdot,0)|(x)
+R_0(|g(\cdot,0)|\cdot\tilde\mu)(x)<\infty.
\end{equation}
By remarks given in Section \ref{sec2.2}, if $f(\cdot,0)\cdot
m\in\RR(E)$ and $g(\cdot,0)\cdot\tilde\mu\in\RR(E)$, then
(\ref{eq5.46}) is satisfied for q.e. $x\in E$.

\begin{theorem}
\label{th5.2}  Assume that  the assumptions of Theorem \ref{th4.3}
hold,  and moreover, \mbox{\rm(\ref{eq5.45})} is satisfied. Let
$u$ be a solution of \mbox{\rm(\ref{eq5.37})} and $v$ be a
solution of \mbox{\rm(\ref{eq5.38})}. Then
\begin{equation}
\label{eq5.14} \lim_{T\rightarrow\infty} u(T,x)=v(x)
\end{equation}
for q.e. $x\in E$. In fact,  for q.e. $x\in E$,
\begin{align}
\label{eq5.5} |u(T,x)-v(x)|\le
3P_T|\varphi|(x)+3P_T(R_0(|f(\cdot,0)|
+|g(\cdot,0)|\cdot\tilde\mu))(x)
\end{align}
for all $T>0$.
\end{theorem}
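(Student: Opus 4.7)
The plan is to realize both $u(T,x)$ and $v(x)$ as values at time $0$ of suitable BSDEs under $P_x=P_{0,x}$, compare these two BSDEs by a Meyer--Tanaka argument that exploits the monotonicity of $f$ and $g$ together with $A^\mu\ge 0$, and then control the resulting terminal error by the a priori estimate (\ref{eq5.28}) and the strong Markov property.

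By Theorem \ref{th3.1}(ii) applied on $[0,T]$ with terminal datum $\varphi$, the pair $(Y^T,M^T):=(u_T(\BBX),M)$ solves BSDE (\ref{eq3.1}) on $(\Omega,\FF,P_x)$ for q.e.\ $x\in E$, with $M^T$ uniformly integrable under $P_x$. Under $P_x$ one has $\BBX_0=(0,x)$, hence $Y^T_0=u_T(0,x)=u(T,x)$, while $\zeta_{\tau}=T\wedge\zeta$ and $Y^T_{T\wedge\zeta}=\varphi(\BBX_{T\wedge\zeta})$. Analogously, by Theorem \ref{th4.3}(ii), $(Y^\infty,M^\infty):=(v(X),M)$ solves BSDE (\ref{eq4.11}) on the same space with $Y^\infty_0=v(x)$, $Y^\infty_{T\wedge\zeta}=v(X_{T\wedge\zeta})$, and $M^\infty$ uniformly integrable. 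Set $\delta Y_t=Y^T_t-Y^\infty_t$ for $t\in[0,T\wedge\zeta]$. Subtracting the two BSDEs and applying the Meyer--Tanaka formula gives
\[
|\delta Y_{T\wedge\zeta}|-|\delta Y_0|\ge \int_0^{T\wedge\zeta}\mathrm{sgn}(\delta Y_{s-})\,d(\delta Y)_s.
\]
By (E3) with $\alpha\le 0$, (E5) and $dA^\mu\ge 0$, the $ds$- and $dA^\mu$-contributions on the right are non-negative, while uniform integrability of $M^T-M^\infty$ makes the martingale part vanish in expectation. Hence
\[
|u(T,x)-v(x)|=|\delta Y_0|\le E_x|\delta Y_{T\wedge\zeta}|\le E_x|\varphi(\BBX_{T\wedge\zeta})|+E_x|v(X_{T\wedge\zeta})|,
\]
and with the convention $\varphi(\partial)=v(\partial)=0$ the first summand equals $P_T|\varphi|(x)$.

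To estimate the second summand, evaluate (\ref{eq4.11}) at $t=T\wedge\zeta$, take absolute values and conditional expectations (using UI of $M^\infty$), and apply the strong Markov property of $\BM^{(0)}$ at $T\wedge\zeta$ together with Lemma \ref{lem2.1} (which under $P_{0,x}$ identifies $A^\mu$ with $A^{0,\tilde\mu}$). This yields
\[
E_x|v(X_{T\wedge\zeta})|\le E_x\!\int_{T\wedge\zeta}^{\zeta}\!\bigl(|f_v(X_s)|\,ds+|g_v(X_s)|\,dA^{0,\tilde\mu}_s\bigr),
\]
and applying (\ref{eq5.28}) at the post-$T\wedge\zeta$ starting point $X_T$ bounds this by $2P_T(R_0|f(\cdot,0)|+R_0(|g(\cdot,0)|\cdot\tilde\mu))(x)$. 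Combining gives (\ref{eq5.5}) (in fact with constants $1$ and $2$, a fortiori $3$ and $3$). The convergence (\ref{eq5.14}) then follows from (\ref{eq5.45}) together with the observation that, for $h=|f(\cdot,0)|+|g(\cdot,0)|\cdot\tilde\mu\in\RR(E)$, the quantity $P_T(R_0h)(x)=E_x\!\int_{T\wedge\zeta}^\zeta dA^{0,h}_s$ tends to $0$ by monotone convergence, since $R_0h(x)<\infty$ q.e.

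The main technical hurdle is the Meyer--Tanaka step over the stochastic interval $[0,T\wedge\zeta]$: one must rely on continuity of $A^\mu$ to avoid jump contributions from the finite-variation part, and on uniform integrability of both martingales, so that the two monotonicity inequalities really survive taking $E_x$. Once these ingredients, both supplied by Sections \ref{sec3}--\ref{sec4}, are in place, the remainder is essentially bookkeeping via the strong Markov property and the a priori bound (\ref{eq5.28}).
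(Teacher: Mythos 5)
Your proof is correct, but it follows a genuinely different route from the paper's. The paper never compares the finite-horizon BSDE directly with the stationary one: it invokes Proposition \ref{prop4.1} to compare the two finite-horizon approximations $Y^T$ and $Y^m$ (both extended by zero past their horizons), passes to the limit $m\to\infty$ using Proposition \ref{prop4.2}, and controls $|Y^T_0-Y^m_0|$ through the $L^q$-estimate of \cite[Lemma 6.1]{BDHPS}; the tail integrals are reduced to $f(\cdot,0),g(\cdot,0)$ by the monotonicity trick on the interval $(T\wedge\zeta,m\wedge\zeta]$ where $\tilde Y^T$ vanishes, and the constant $3$ arises as $\sup$ over $q$ of $(1-q)^{-1/q}\to e$. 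You instead run Meyer--Tanaka once, against the infinite-horizon solution $v(X)$ itself, which shifts all the work into the single term $E_x|v(X_{T\wedge\zeta})|$; you then dispose of that term via the representation (\ref{eq5.17}), the Markov property plus Lemma \ref{lem2.1} (exactly the computation the paper performs in (\ref{eq5.43})--(\ref{eq5.44})), and the a priori bound (\ref{eq5.28}). This buys you the sharper constants $1$ and $2$ in place of $3$ and $3$, and it makes the estimate (\ref{eq5.5}) manifestly independent of hypothesis (\ref{eq5.45}), which you then only need for the convergence statement (\ref{eq5.14}). Two small points deserve the standard care you allude to: the stochastic integral $\int\mathrm{sgn}(\delta Y_{s-})\,d(\delta M)_s$ is a priori only a local martingale, so "vanishes in expectation'' should be justified by localization and the class-D property of $\delta Y$ (as in the proofs of Propositions \ref{prop4.1} and \ref{prop4.2}); and applying (\ref{eq5.28}) at the random point $X_T$ uses that the process started from a q.e.\ point does not hit the exceptional set where (\ref{eq5.28}) fails, which is implicit in the paper's own derivation of (\ref{eq5.43}). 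Neither is a gap.
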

\begin{proof}
Let $Y^T$ be the first component of the solution of (\ref{eq4.24})
(with $T=n$) and $Y$ be the first component of the solution of
(\ref{eq4.11}). Since (\ref{eq5.10}) is satisfied for q.e. $x\in
E$, applying Proposition \ref{prop4.2} we conclude that for every
$q\in(0,1)$,
\begin{equation}
\label{eq5.6} \lim_{T\rightarrow\infty}E_x|Y^T_0-Y_0|^q=0
\end{equation}
for q.e. $x\in E$. On the other hand, by Theorem \ref{th3.1} and
Theorem \ref{th4.3}, for q.e. $x\in E$ we have
\[
Y^T_t=u_T(\BBX_t),\qquad Y_t=v(X_t),\quad t\ge0,\quad
P_x\mbox{-a.s.},
\]
where $u_T$ is a solution of (\ref{eq5.39}) and $v$ is a solution
of (\ref{eq3.7}). In particular, for q.e. $x\in E$,
\[
Y^T_0=u_T(0,x),\qquad Y_0=v(x),\quad P_x\mbox{-a.s.}
\]
But $u_T(0,x)=u(T,x)$ by (\ref{eq5.1}). Hence
\begin{equation}
\label{eq5.13} |u(T,x)-v(x)|^q=|u_T(0,x)-v(x)|^q=E_x|Y^T_0-Y_0|^q
\end{equation}
for $T>0$. Therefore (\ref{eq5.6}) implies (\ref{eq5.14}). To show
(\ref{eq5.5}), we first observe that by (\ref{eq5.6}) and
(\ref{eq5.13}),
\begin{equation}
\label{eq5.42} |u(T,x)-v(x)|^q
=\lim_{m\rightarrow\infty}E_x|Y^T_0-Y^m_0|^q,
\end{equation}
whereas by (\ref{eq5.19}) and (\ref{eq4.10}),
\begin{align}
\label{eq5.40} \lim_{m\rightarrow\infty}E_x|Y^T_0-Y^m_0|^q &\le
\frac{1}{1-q}\left(E_{x} \fch_{\{\zeta>T\}}|\varphi(X_T))|
+\int^{\zeta}_{T\wedge\zeta}|f(X_t,0)|\,dt\right.\nonumber\\
&\qquad\qquad\left. +\int^{\zeta}_{T\wedge\zeta}
|g(X_t,0)|\,dA^{\mu}_t\right)^q.
\end{align}
By Lemma \ref{lem2.1},
\[
E_x\int^{\zeta}_{T\wedge\zeta} |g(X_{t},0)|\,dA^{\mu}_{t} =
E_x\int^{\infty}_T |g(X_{t},0)|\,dA^{0,\tilde\mu}_{t},
\]
so by the  Markov property of $\BM^{(0)}$,
\begin{equation}
\label{eq5.43} E_x\int^{\zeta}_{T\wedge\zeta}
|g(X_{t},0)|\,dA^{\mu}_{t} =
P_T(R_0(|g(\cdot,0)|\cdot\tilde\mu))(x).
\end{equation}
Similarly, since $\int^t_0|f(X_{s},0)|\,ds =A^{|f(\cdot,0)|\cdot
m}_t$ for $t\ge0$, we have
\begin{equation}
\label{eq5.44} E_x\int^{\zeta}_{T\wedge\zeta} |f(X_{t},0)|\,dt =
P_T(R_0|f(\cdot,0)|)(x).
\end{equation}
Combining  (\ref{eq5.42})--(\ref{eq5.44}) yields (\ref{eq5.5}) but
with constant 3 replaced by $(1-q)^{-1/q}$ with arbitrary $q\in
(0,1)$. This proves (\ref{eq5.5}) since
$(1-q)^{-1/q}\rightarrow e$ as $q\downarrow0$.
\end{proof}
\medskip

Let $\lambda\ge0$ and let $L^{\lambda}$ denote the operator
associated with the form $(B_{\lambda},V)$, i.e.
\begin{equation}
\label{eq5.27} L^{\lambda}=L^0-\lambda,
\end{equation}
where $L^0$ is the operator associated with  $(B_0,V)=(B,V)$. Let
$(P^{\lambda}_t)$, $(R^{\lambda}_{\alpha})$ denote the semigroup
and the resolvent associated with the Hunt process corresponding
to $(B_{\lambda},V)$. It is well known that for $\psi\in
L^1(E;m)$, $\mu\in\RR(E)$ we have
\[
P^{\lambda}_t\psi(x)=e^{-\lambda t}P^0_t\psi(x),\quad
R^{\lambda}_{\alpha}\mu(x)=R^0_{\alpha+\lambda}\mu(x)
\]
for q.e. $x\in E$. Therefore from Theorem \ref{th5.2} we
immediately get the following corollary.

\begin{corollary}
\label{cor5.2} Let the assumptions of Theorem \ref{th5.2} hold.
Let $u,v$ be solutions of \mbox{\rm(\ref{eq5.37})} and
\mbox{\rm(\ref{eq5.38})}, respectively, with $L=L^{\lambda}$
defined by \mbox{\rm(\ref{eq5.27})}. Then for q.e. $x\in E$,
\[
|u(T,x)-v(x)|\le 3e^{-\lambda T}\big(
P^0_T|\varphi|(x)+P^0_T(R^0_{\lambda}(|f(\cdot,0)|
+|g(\cdot,0)|\cdot\tilde\mu))(x)\big)
\]
for all $T>0$.
\end{corollary}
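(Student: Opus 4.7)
The plan is to reduce the corollary to Theorem \ref{th5.2} applied to the perturbed form $(B_\lambda,V)$, and then translate the resulting bound back to quantities associated with $(B,V)$ using the two elementary semigroup/resolvent identities displayed just before the statement.

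First I would invoke Remark \ref{rem4.5}: the family $\{B^{(t)}_\lambda\}$ satisfies all the structural assumptions of Section \ref{sec2}, and consequently every result of Sections \ref{sec3}, \ref{sec4} and \ref{sec5} applies verbatim to the operator $L^\lambda$ and to the Hunt process associated with $(B_\lambda,V)$. The data hypotheses (E1)--(E6) in Theorem \ref{th4.3} concern only $\varphi,f,g,\mu$, not the form, so they automatically transfer. The vanishing condition (\ref{eq5.45}) also carries over to the perturbed semigroup since $P^\lambda_t|\varphi|(x)=e^{-\lambda t}P^0_t|\varphi|(x)\le P^0_t|\varphi|(x)\to 0$ as $t\to\infty$.

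Next I would apply Theorem \ref{th5.2} with $(B,V)$ replaced by $(B_\lambda,V)$, which gives, for q.e.\ $x\in E$ and every $T>0$,
\[
|u(T,x)-v(x)|\le 3P^\lambda_T|\varphi|(x)+3P^\lambda_T\bigl(R^\lambda_0(|f(\cdot,0)|+|g(\cdot,0)|\cdot\tilde\mu)\bigr)(x).
\]
Using the identities $P^\lambda_T\psi(x)=e^{-\lambda T}P^0_T\psi(x)$ and $R^\lambda_0\nu(x)=R^0_\lambda\nu(x)$, I would then note that these compose compatibly, i.e.\ $P^\lambda_T R^\lambda_0=e^{-\lambda T}P^0_T R^0_\lambda$ on positive Borel data and on positive smooth measures. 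Substituting into the above bound yields exactly the right-hand side of the corollary.

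There is essentially no obstacle, only a small bookkeeping point: one has to check that the displayed identities apply in precisely the form needed, including the composition $P^\lambda_T\circ R^\lambda_0$ acting on both the positive Borel function $|f(\cdot,0)|$ and on the measure $|g(\cdot,0)|\cdot\tilde\mu$. Both facts are immediate from the probabilistic picture already set up in Section \ref{sec2.2}: the Hunt process associated with $(B_\lambda,V)$ is the process of $(B,V)$ killed independently at exponential rate $\lambda$, so the extra factor $e^{-\lambda t}$ appears inside every expectation, and Revuz measures are unchanged.
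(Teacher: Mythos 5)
Your proposal is correct and follows exactly the paper's route: the paper proves the corollary by invoking Remark \ref{rem4.5} together with the identities $P^{\lambda}_t\psi=e^{-\lambda t}P^0_t\psi$ and $R^{\lambda}_{\alpha}\mu=R^0_{\alpha+\lambda}\mu$ and then citing Theorem \ref{th5.2} for the perturbed form, which is precisely your argument. Your additional remarks on why the hypotheses (E1)--(E6) and (\ref{eq5.45}) transfer to $(B_\lambda,V)$ only make explicit what the paper leaves implicit.
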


\begin{remark}
The results of Sections \ref{sec3}--\ref{sec5} can be carried over
to quasi-regular forms. Indeed, if the forms $\{B(t),t\in[0,T]\}$
are quasi-regular, then by \cite[Theorem IV.2.2]{S}, there exists a
special standard process $\BBM$ properly associated in the
resolvent sense with the time dependent form defined by
(\ref{eq2.23}). One can check that all the results of Sections
\ref{sec3} and \ref{sec4} hold true for such a process. This is
because  in their proofs the fact that $\BBM$ is a Hunt process is
not used and the results of \cite{K:JFA} on which we rely in the
proofs of Section \ref{sec3} hold for quasi-regular forms
$(B^{(t)},V)$ (see \cite[Remark 4.4]{K:JFA}). Similarly, the
results of \cite{KR:CM} on which we rely in Section \ref{sec4}
hold for quasi-regular form $(B,V)$. As a consequence, Theorem
\ref{th5.2} holds true in the case of quasi regular form $(B,V)$
(its proof for such forms requires no changes).
\end{remark}

\section{Applications}
\label{sec6}

In this section, we give four quite different examples of forms
$(B,V)$ and measures $\mu$ for which Theorem \ref{th5.2} applies.

\subsection{Classical local Dirichlet forms}
\label{sec6.1}

In this subsection, we assume that $E=D$, where $D$ is a nonempty
connected bounded open subset of $\BR^d$ with $d\ge2$. We denote by $m$  the
Lebesgue measure on $D$. We consider the classical form $(B,V)$ on $H=L^2(D;m)$
defined as
\begin{equation}
\label{eq6.16}
B(\varphi,\psi)=\frac12\int_D(\nabla\varphi,\nabla\psi)\,dx,\quad
\varphi,\psi\in V.
\end{equation}
We will consider two cases: $V=H^1_0(D)$ and $V=H^1(D)$.

\paragraph{Equations with Dirichlet boundary conditions}

Let $V=H^1_0(D)$. It is well known that $(B,V)$ is a regular
Dirichlet form on $H$ (see \cite[Example 1.2.3]{FOT}). The operator
$L$ associated with $(B,V)$ in the sense of (\ref{eq2.14}) is
$\frac12\Delta$ with the Dirichlet boundary condition (see
\cite[Example 1.3.1]{FOT}). The process $\BM^{(0)}=(X,P_x)$
associated with $(B,V)$ in the resolvent sense is the  Brownian
motion killed upon leaving $D$ (see \cite[Example 4.4.1]{FOT}).
Its life time  is equal to $\tau_D=\inf\{t>0:X_t\notin D\}$.

We consider the problems
\begin{equation}
\label{eq6.1}
\partial_tu-\frac12\Delta u+h(u)|\nabla u|^2=\mu,\qquad
u|_{(0,\infty)\times\partial D}=0,\qquad u(0,\cdot)=\varphi
\end{equation}
and
\begin{equation}
\label{eq6.02} -\frac12\Delta v+h(v)|\nabla v|^2=\tilde\mu,\qquad
u|_{\partial D}=0,
\end{equation}
where $\varphi\in L^1(D;m)$ is nonnegative, $\mu=dt\otimes\tilde\mu$ with
$\tilde\mu\in\MM^+_{0,b}(D)$ and
$h:\BR\rightarrow\BR$ is a continuous function satisfying the
``sign condition", i.e.
\begin{equation}
\label{eq6.4} \forall s\in\BR,\quad h(s)s\ge0.
\end{equation}
The model example is $h(s)=s$, $s\in\BR$. In equations (\ref{eq6.1}) and (\ref{eq6.02})
gradient of the solution appears, so
they are more general than the equations studied in Sections
\ref{sec3}--\ref{sec5}. We shall see, however, that they are
closely related to equations of the forms (\ref{eq5.37}),
(\ref{eq5.38}).

We first give definitions of probabilistic solutions of
(\ref{eq6.1}), (\ref{eq6.02}).

\begin{definition}
(a) We say that $u:(0,\infty)\times D\rightarrow\BR$ is a
probabilistic solution of (\ref{eq6.1}) if for every $T>0$ the
function $\bar u$ defined as $\bar u(t,x)=u(T-t,x)$, $(t,x)\in
D_T$, is a probabilistic solution of the problem
\begin{equation}
\label{eq6.3}
\partial_t\bar u+\frac12\Delta\bar u-h(\bar u)|\nabla\bar u|^2=-\mu,\qquad
\bar u|_{(0,T)\times\partial D}=0,\qquad\bar u(T,\cdot)=\varphi,
\end{equation}
i.e. $h(\bar u)|\nabla\bar u|^2\in\RR(D_{0,T})$ and for q.e. $z\in
D_{0,T}$,
\begin{equation}
\label{eq6.15} \bar u(z)=E_z\left(\varphi(\BBX_{\zeta_{\tau}})
-\int^{\zeta_{\tau}}_0 h(\bar u)(\BBX_t)|\nabla \bar
u(\BBX_t)|^2\,dt+\int^{\zeta_{\tau}}_0dA^{\mu}_t\right).
\end{equation}
(b) We say that $v:D\rightarrow\BR$ is a probabilistic solution of
(\ref{eq6.02}) if $h(v)|\nabla v|^2\in\RR(E)$ and for q.e. $x\in
D$,
\[
v(x)=E_x\left(-\int^{\zeta}_0 h(v)(X_t)|\nabla
v(X_t)|^2\,dt+\int^{\zeta}_0dA^{0,\tilde\mu}_t\right).
\]
\end{definition}

Let
\[
G(s)=2\int^s_0h(t)\,dt,\qquad \Phi(s)=\int^s_0\exp(-G(t))\,dt,
\quad s\in\BR,
\]
and let $H:\Phi(\BR)\rightarrow\BR$ be defined as
\[
H(s)=\exp(-G(\Phi^{-1}(s))).
\]
The function $\Phi$ is strictly increasing on $\BR$, and by (\ref{eq6.4}), $G$ is nondecreasing on $[0,\infty)$.  We set $\Phi(\infty)=\lim_{s\rightarrow\infty}\Phi(s)$, $G(\infty)=\lim_{s\rightarrow\infty}G(s)$, and we define $\hat H:\BR\rightarrow\BR$ by
\[
\begin{cases}
\hat H(s)=H(s), s\in[0,\infty),& \mbox{if } \Phi(\infty)=\infty,\\
\hat H(s)=H(s), s\in[0,\Phi(\infty)]\mbox{ and }\hat H(s)=e^{-G(\infty)}, s>\Phi(\infty),&\mbox{if } \Phi(\infty)<\infty,\\
\hat H(s)=H(0), &\mbox{if }s<0.
\end{cases}
\]
Notice that $\hat H$ is continuous and  nonincreasing on $\BR$, and $0\le\hat H\le 1$. Therefore $g:=\hat H$ satisfies the hypotheses (E2), (E5) and (E6).

In Proposition \ref{prop6.2} below we show that probabilistic
solutions of problems (\ref{eq6.1}), (\ref{eq6.02}) are closely
related to the probabilistic solutions of problems
\begin{equation}
\label{eq6.2}
\partial_tw-\frac12\Delta w=\hat H(w)\cdot\mu, \qquad w|_{(0,\infty)\times\partial
D}=0,\qquad w(0,\cdot)=\Phi(\varphi)
\end{equation}
and
\begin{equation}
\label{eq6.04} -\frac12\Delta\tilde w=\hat H(\tilde w)\cdot\tilde\mu,\qquad
\tilde w|_{\partial D}=0.
\end{equation}
We start with the observation that in fact,  in the above equations, one can replace $\hat H$ by $H$.

\begin{remark}
\label{rem6.1}
If $w$ is a solution of (\ref{eq6.2}), then $0\le w\le \Phi(\infty)$ q.e. on $(0,\infty)\times D$. Thus, we can replace $\hat H$ by $H$ in (\ref{eq6.2}). Similarly, if  $\tilde w$ is a solution of (\ref{eq6.04}), then $0\le\tilde w\le \Phi(\infty)$ q.e. on $D$. Thus, we can replace $\hat H$ by $H$ in (\ref{eq6.04}).

We provide the proof for (\ref{eq6.2}). The proof for (\ref{eq6.04}) is similar. Let  $T>0$ and $\bar w(t,x)=w(T-t,x)$. By \cite[Proposition 3.7]{KR:JEE}, for q.e. $z\in D_{0,T}$ the pair
\[
(Y_t,Z_t)=(\bar w(\BBX_t),\nabla\bar w(\BBX_t)),\quad
t\in[0,\zeta_{\tau}],
\]
is  a solution of the BSDE
\begin{equation}
\label{eq6.7} Y_t=\Phi( \varphi(\BBX_{\zeta_{\tau}}))
+\int_{t\wedge\zeta_{\tau}}^{\zeta_{\tau}}\hat H(Y_{s})\,dA^{\mu}_{s}
-\int^{\zeta_{\tau}}_{t\wedge\zeta_{\tau}}Z_{s}\,dW_{s},\quad
t\in[0,\zeta_{\tau}],
\end{equation}
under the measure $P_z$, where $W$ is some Wiener process starting
from $z$ under $P_z$ (In different words, in the case where the form
(\ref{eq6.16}) is considered, if $w$ is a probabilistic solution of
(\ref{eq6.2}), then the martingale $M$
appearing in Theorem \ref{th3.1} (with
the data from (\ref{eq6.2})) has the representation
$M_t=\int^t_0Z_r\,dW_r$ with $Z$ as above). Since, by assumption, $\varphi\ge0$, we have $\Phi\circ\varphi
\ge0$, so from (\ref{eq6.7}) it follows that $\bar w\ge0$ q.e. on $D_{0,T}$. Since $T>0$ was arbitrary, $w\ge0$ q.e. on $(0,\infty)\times D$. Since $w$ is quasi-continuous, it is finite q.e., so $w\le\Phi(\infty)$ q.e.  on $(0,\infty)\times D$ if $\Phi(\infty)=\infty$. Suppose now that $\Phi(\infty)<\infty$. To show that $w\le\Phi(\infty)$, we first assume additionally that
\begin{equation}
\label{eq6.10}
\int_0^\infty h(s)\,ds=\infty.
\end{equation}
Choose $\{a_n\}\subset[0,\infty)$ such that  $a_n\nearrow\Phi(\infty)$, By (\ref{eq6.7}) and the Meyer-Tanaka formula, for q.e. $z\in E_{0,T}$ we have
\begin{align}
\label{eq6.21}
(\bar w(z)-a_n)^+&\le E_z\left(\Phi\circ\varphi(X_T)-a_n)^{+}
+\int_0^{\zeta_{\tau}}\fch_{\{\bar w(\BBX_s)>a_n\}}\hat H(\bar w(\BBX_s))\,dA^\mu_s\right)
\nonumber\\
&\le E_z\left(\Phi\circ\varphi(X_T)-a_n)^{+}
+\int_0^{\zeta_{\tau}}H(a_n)\,dA^\mu_s\right).
\end{align}
By (\ref{eq6.10}), $H(a_n)\searrow 0$, so letting $n\rightarrow\infty$ in (\ref{eq6.21}) yields $(\bar w(z)-\Phi(\infty))^+=0$. Since $T>0$ was arbitrary, this implies that  $w\le\Phi(\infty)$ q.e. on $(0,\infty)\times D$.
We now show how to dispense with the assumption (\ref{eq6.10}). Let $h_n(x)=h(x)+(1/n) \arctan x$ and  $w_n$ be a solution  of  (\ref{eq6.2}) with $\hat H$ replaced by $\hat H_n$ defined as $\hat H$ but with $h$ replaced by $h_n$. By what has already been proved $w_n\le\Phi(\infty)$ q.e. on $(0,\infty)\times D$. Set $\bar w_n(t,x)=w_n(T-t,x)$. By using estimates of the form (\ref{eq3.33}) we show that $\bar w_n\rightarrow\bar w$ q.e. on $D_{0,T}$. Consequently, $\bar w\le \Phi(\infty)$ q.e. on $D_{0,T}$, so $w\le\Phi(\infty)$ q.e. on $(0,\infty)\times D$.
\end{remark}

Assertions (ii) and (iii) of Proposition \ref{prop6.2}
may be viewed as probabilistic reformulation of known analytic
facts relating (\ref{eq6.1}), (\ref{eq6.02}) to (\ref{eq6.2}),
(\ref{eq6.04}) (see, e.g., \cite{LP}) or \cite[Remark 2.17]{MP}).

\begin{proposition}
\label{prop6.2} Assume that $\varphi\in L^1(D;m)$ is nonnegative,
$\mu\in\MM^+_{0,b}(D)$ and $h:D\rightarrow \BR$ is a continuous
function satisfying \mbox{\rm(\ref{eq6.4})}. Then
\begin{enumerate}
\item[\rm(i)]There exists a unique solution $u$  of problem
\mbox{\rm(\ref{eq6.1})} and a unique solution $v$ of problem \mbox{\rm(\ref{eq6.02})}. Moreover, $0\le u\le\Phi(\infty)\ge0$ q.e. on $(0,\infty)\times D$ and $0\le v\le\Phi(\infty)$ q.e. on $D$.
\item[\rm(ii)]$u$ is a probabilistic solution of \mbox{\rm(\ref{eq6.1})}
if and only if $w=\Phi(u)$ is a solution of
\mbox{\rm(\ref{eq6.2})}.
\item[\rm(iii)]$v$ is a probabilistic solution of \mbox{\rm(\ref{eq6.02})}
if and only if $\tilde w=\Phi(v)$ is a solution of
\mbox{\rm(\ref{eq6.04})}.
\end{enumerate}
\end{proposition}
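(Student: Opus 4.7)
The plan is to prove parts (ii) and (iii) first by means of a Cole--Hopf-type change of variable $w=\Phi(u)$, and then to derive part (i) from Theorems \ref{th3.1} and \ref{th4.3} applied to the transformed problems \mbox{\rm(\ref{eq6.2})}, \mbox{\rm(\ref{eq6.04})}. The key point is that the function $H$ introduced in the excerpt is continuous, bounded ($0<H\le 1$, because $G\ge 0$ by the sign condition \mbox{\rm(\ref{eq6.4})}) and nonincreasing, hence $g(x,w):=H(w)$ satisfies hypotheses (E2), (E5), (E6); moreover, $f\equiv 0$ trivially satisfies (E3), (E4), and since $(B,V)$ is the classical Dirichlet form on the bounded domain $D$ and $\mu\in\MM^+_{0,b}(D)$, one has $\mu\in\RR^+(D)$, $\mu\in\RR^+(D_{0,T})$ (the form is transient on $D$). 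Similarly $\Phi(\varphi)\in L^1(D;m)$ because $|\Phi(s)|\le|s|$.

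The guiding formal identity is the chain rule $\Phi''(s)=-2h(s)\Phi'(s)$, which gives, for smooth $v$,
\[
\tfrac{1}{2}\Delta\Phi(v)=\Phi'(v)\Big(\tfrac12\Delta v - h(v)|\nabla v|^2\Big),\qquad \Phi'(v)=H(\Phi(v)).
\]
Thus, at least formally, $v$ solves \mbox{\rm(\ref{eq6.02})} if and only if $\tilde w=\Phi(v)$ solves \mbox{\rm(\ref{eq6.04})}, and the same for the parabolic problems. To make this rigorous at the probabilistic level, I would apply the It\^o--Fukushima formula. Concretely, for (iii), start from a probabilistic solution $\tilde w$ of \mbox{\rm(\ref{eq6.04})} provided by Theorem \ref{th4.3}; by the Fukushima decomposition of $\tilde w(X_t)$ the martingale part $M^{[\tilde w]}$ has quadratic variation $\int_0^t|\nabla\tilde w|^2(X_s)\,ds$ and the continuous-additive-functional-of-zero-energy part equals $-\int_0^t H(\tilde w)(X_s)\,dA^\mu_s$. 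Set $v=\Phi^{-1}(\tilde w)$; since $\Phi^{-1}\in C^2$ with $(\Phi^{-1})'(s)=1/H(s)$ and $(\Phi^{-1})''(s)=2h(\Phi^{-1}(s))/H(s)^2$, and since $|\nabla\tilde w|^2=H(\tilde w)^2|\nabla v|^2$, It\^o's formula yields
\[
v(X_0)=v(X_{t\wedge\zeta})-\int_0^{t\wedge\zeta}\frac{1}{H(\tilde w)}\,dM^{[\tilde w]}_s+\int_0^{t\wedge\zeta}dA^\mu_s-\int_0^{t\wedge\zeta}h(v)|\nabla v|^2(X_s)\,ds.
\]
Taking $E_x$, letting $t\to\infty$ and using $v(X_\zeta)=\Phi^{-1}(0)=0$ together with the fact that the stochastic integral is a local martingale, I get the probabilistic representation defining a solution of \mbox{\rm(\ref{eq6.02})}. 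The converse is completely analogous by applying It\^o's formula to $\Phi(v(X_t))$.

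For (ii) I would repeat the same argument for the time-reversed parabolic solution $\bar u$ of \mbox{\rm(\ref{eq6.3})} on the finite horizon interval $[0,\zeta_\tau]$, using the terminal data $\Phi(\varphi)(\BBX_{\zeta_\tau})$; the integrability $\Phi(\varphi)\in L^1(D;m)$ noted above ensures the hypotheses of Theorem \ref{th3.1}. Part (i) then follows: Theorems \ref{th3.1} and \ref{th4.3} applied with $f\equiv 0$ and $g\equiv H$ yield unique probabilistic solutions $w$, $\tilde w$ of \mbox{\rm(\ref{eq6.2})}, \mbox{\rm(\ref{eq6.04})}; then $u=\Phi^{-1}(w)$, $v=\Phi^{-1}(\tilde w)$ solve the original problems by (ii), (iii); uniqueness is immediate since any two probabilistic solutions of \mbox{\rm(\ref{eq6.1})}, \mbox{\rm(\ref{eq6.02})} transform, again by (ii), (iii), into solutions of the transformed problems, which are unique.

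The main obstacle is the rigorous justification of the It\^o--Fukushima step. One must legitimately apply the chain rule to $\Phi^{-1}(\tilde w(X_t))$ and identify the drift of $v(X_t)$ as a well-defined smooth-measure integral. I would do this by localization: introduce the stopping times $\sigma_k=\inf\{t\ge 0:|\tilde w(X_t)|\ge k\}\wedge\zeta$, on which $\Phi^{-1}$, $(\Phi^{-1})'$, $(\Phi^{-1})''$ are bounded and $\tilde w(\cdot\wedge\sigma_k)$ is a bounded semimartingale, apply It\^o's formula on $[0,\sigma_k]$, verify via the decomposition above that $h(v)|\nabla v|^2\cdot m\in\RR(E)$ (using the corresponding property of $H(\tilde w)^2|\nabla\tilde w|^2\cdot m$ inherited from $\tilde w\in V_e$), and then pass to the limit $k\to\infty$ via monotone/dominated convergence and the fact that $v(X)$ is of class D. The same localization strategy handles the parabolic case.
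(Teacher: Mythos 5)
Your proposal is correct and follows essentially the same route as the paper: both prove (ii) and (iii) by applying It\^o's formula to $\Phi^{-1}$ (resp.\ $\Phi$) composed with the solution evaluated along the process, using the fact that for the classical form the martingale part of $w(X)$ is $\int\nabla w(X)\,dB$ (you phrase this via the Fukushima decomposition, while the paper invokes the representation results of \cite{KR:JEE} and \cite{K:AMPA}), and then deduce (i) from Theorems \ref{th3.1} and \ref{th4.3} applied to the transformed problems with $f\equiv0$, $g=H$. The only substantive difference is that you spell out the localization needed to justify the It\^o step and the identification of $h(v)|\nabla v|^2\cdot m$ as a member of $\RR(E)$, which the paper leaves implicit.
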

\begin{proof}
We first prove (ii). Let $w$ be a solution of (\ref{eq6.2}). For fixed  $T>0$, we define $\bar w$ and $(Y,Z)$ as in Remark \ref{rem6.1}. We know that $0\le\bar w\le\Phi(\infty)$ q.e. on $D_{0,T}$. Let $\bar u=\Phi^{-1}(\bar w)$. Since $\Phi^{-1}$ is of class $C^2$,
applying It\^o's formula we get
\begin{align*}
\bar u(\BBX_{\zeta_{\tau}}) -\bar u(\BBX_{0})
&=\Phi^{-1}(Y_{\zeta_{\tau}})-\Phi^{-1}(Y_{0})\\
&=\int^{\zeta_{\tau}}_{0} (\Phi^{-1})'(Y_{t})\,dY_{t}
+\frac12\int^{\zeta_{\tau}}_{0} (\Phi^{-1})''(Y_{t})\,d\langle
Y\rangle_{t}.
\end{align*}
But
\[
(\Phi^{-1})'=\frac1{\Phi'(\Phi^{-1})}\,,\qquad
(\Phi^{-1})''=-\frac1{(\Phi'(\Phi^{-1}))^2}\cdot
\Phi''(\Phi^{-1})\cdot\frac1{\Phi'(\Phi^{-1})}
\]
a.e. with respect to the Lebesgue measure. Hence
\begin{align*} \bar
u(\BBX_0)&=\bar u(\BBX_{\zeta_{\tau}})
+\int^{\zeta_{\tau}}_{0}\frac1{\Phi'(\bar u)}\,
H(\bar w(\BBX_{t}))\,dA^{\mu}_{t}\\
&\quad-\int^{\zeta_{\tau}}_{0}\frac1{\Phi'(\bar u)}\,\nabla \bar
w(\BBX_{t})\,dW_{t} +\frac12\int^{\zeta_{\tau}}_{0}
\frac{\Phi''(\bar u)}{(\Phi'(\bar u))^3}\,|\nabla\bar w|^2
(\BBX_{t})\,dt.
\end{align*}
Since $\bar u(\BBX_{\zeta_{\tau}})= \varphi(\BBX_{\zeta_{\tau}})$
and
\[
\frac{\Phi''}{(\Phi')^3}=-\frac{2h}{(\Phi')^2}\,,\qquad\Phi'(\bar
u)=H(\bar w),\qquad \nabla\bar  w=\Phi'(\bar u)\nabla\bar u,
\]
we have
\begin{align*}
\bar u(\BBX_0)&=\varphi(\BBX_{\zeta_{\tau}})
+\int^{\zeta_{\tau}}_{0}dA^{\mu}_{t}
-\int^{\zeta_{\tau}}_{0}h(\bar u)|\nabla\bar u|^2(\BBX_{t})\,dt
-\int^{\zeta_{\tau}}_{0}\nabla \bar u(\BBX_t)\,dW_{t}.
\end{align*}
Taking the expectation with respect to $P_x$ we see that $\bar
u=\Phi^{-1}(\bar w)$ is a probabilistic solution of (\ref{eq6.3}).
Hence $u=\Phi^{-1}(w)$ is a probabilistic solution of
(\ref{eq6.1}).

To prove the opposite implication, we first note that if $u$
is a solution of (\ref{eq6.1}), then for every $T>0$, for q.e. $z\in D_{0,T}$ the
pair
\[
(\tilde Y_t,\tilde Z_t)=(\bar u(\BBX_t),\nabla\bar
u(\BBX_t)),\quad t\in[0,\zeta_{\tau}],
\]
is  a solution of the BSDE
\[
\tilde Y_t=\varphi(\BBX_{\zeta_{\tau}})
-\int_{t\wedge\zeta_{\tau}}^{\zeta_{\tau}}h (\tilde Y_{s})|\tilde
Z_{s}|^2\,ds +\int_{t\wedge\zeta_{\tau}}^{\zeta_{\tau}}dA^{\mu}_{s}
-\int^{\zeta_{\tau}}_{t\wedge\zeta_{\tau}}Z_{s}\,dW_{s},\quad
t\in[0,\zeta_{\tau}],
\]
under the measure $P_z$. In case $h(\bar u)|\nabla\bar u|^2\in
L^1(D_{0,T};m_1)$ this follows directly from \cite[Proposition
3.7]{KR:JEE}, while in case $h(\bar u)|\nabla\bar u|^2\cdot
m\in\RR(D_{0,T})$ follows from \cite[Proposition 3.7]{KR:JEE} by
simple approximation. Put $\bar w =\Phi(\bar u)$. Applying It\^o's
formula we show that the pair
\[
(Y_t,Z_t)=(\bar w(\BBX_t),\nabla\bar w(\BBX_t)),\quad
t\in[0,\zeta_{\tau}],
\]
is a solution of (\ref{eq6.7}). From this it follows that $w$ is a solution of (\ref{eq6.2}).
This completes the proof of (ii).

The proof of (iii) is similar to that of (ii). We apply It\^o's
formula and the fact that in case of the form (\ref{eq6.16}), the
martingale $M$ appearing in  Theorem \ref{th4.3} has the representation
$M_t=\int^t_0Z_s\,dW_s$, $t\ge0$, with $Z_t=\nabla v(X_t)$ if we
consider equation (\ref{eq6.02}), and with $Z_t=\nabla \tilde
w(X_t)$ if we consider (\ref{eq6.04}) (for the representation
property for $M$ see \cite[Theorem 3.5]{K:AMPA}.

We now show (i). We know that  $g:=\hat H$ satisfies the hypotheses (E2), (E5) and (E6). Therefore, by Theorem \ref{th3.1}, there exists a unique solution $w$
of (\ref{eq6.2}), while by Theorem \ref{th4.3}, there exists a unique solution $\tilde w$ of (\ref{eq6.04}). Therefore (i) follows from (ii), (iii) and Remark \ref{rem6.1}.
\end{proof}

\begin{remark}
Assume that  $\varphi\in L^1(D)$ is nonnegative, $\tilde\mu(dx)=\beta(x)\,dx$ for some
$\beta\in L^1(D)$ and  $h$ is a continuous function satisfying
(\ref{eq6.4}). Moreover, assume that there exist $L,\delta>0$ such
that $h(s)s\ge\delta$ for $s\in\BR$ such that $|s|\ge L$.
\smallskip\\
(i) In \cite{BG} it is proved that under the above assumptions
there exists a weak solution $v\in H^1_0(D)$ of (\ref{eq6.02})
such that $h(v)|\nabla v|^2\in L^1(D;m)$. A quasi-continuous
version of $v$, which we still denote by $v$, is a probabilistic
solution of (\ref{eq6.02}). Indeed, since for every bounded $w\in
H^1_0$ we have $B(v,w)=\int_D(h(v)|\nabla v|^2+\beta)w\,dx$, $v$
is a solution of problem (\ref{eq6.02}) in the sense of duality
(see \cite[Section 5]{KR:JFA} for the definition). Therefore, by
\cite[Proposition 5.1]{KR:JFA}, $v$ is a probabilistic solution of
(\ref{eq6.02}).
\smallskip\\
(ii) By the results proved in \cite{Po}, there exists a  weak solution
$\bar u\in L^2(0,T;H^1_0(D))$ of problem (\ref{eq6.3}) such that
$h(\bar u) |\nabla\bar u|^2\in L^1(D_T;m_1)$. Its quasi-continuous
version is a probabilistic solution of (\ref{eq6.3}). This follows
from the fact that it is a solution of (\ref{eq6.3}) in the sense
of duality (see \cite[Section 4]{K:JFA} for the definition), and
hence, by \cite[Corollary 4.2]{K:JFA}, a probabilistic solution of
(\ref{eq6.3}).
\end{remark}

\begin{proposition}
Let $\varphi,h$ satisfy  the assumptions of Proposition \ref{prop6.2}, and let $\mu(dx)=\beta(x)\,m(dx)$ for some nonnegative $\beta\in
L^1(D;m)$. Then
\begin{enumerate}
\item[\rm(i)]For q.e. $x\in D$, $u(t,x)\rightarrow v(x)$ as $t\rightarrow\infty$.
\item[\rm(ii)]$u(t,\cdot)\rightarrow v$ in $L^1(D;m)$ as $t\rightarrow\infty$.
\end{enumerate}
\end{proposition}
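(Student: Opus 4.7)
My plan is to reduce both parts, via Proposition~\ref{prop6.2}, to the corresponding assertions for the transformed problems (\ref{eq6.2}) and (\ref{eq6.04}), to which Theorem~\ref{th5.2} applies directly. With $w=\Phi(u)$ and $\tilde w=\Phi(v)$, I need to check that the data $f\equiv0$, $g(x,y)=H(y)$, initial value $\Phi(\varphi)$ and measure $\mu$ satisfy hypotheses (E1)--(E6) for the form $(B,H^1_0(D))$. Since $0\le\Phi'\le1$ one has $|\Phi(\varphi)|\le|\varphi|\in L^1(D;m)$; since $H$ is continuous, nonincreasing with $0\le H\le1$, (E2), (E3), (E5) are immediate, and (E4), (E6) reduce to $\mu\in\RR^+(D)$, which holds because $(B,H^1_0(D))$ is transient and $\mu\in\MM^+_{0,b}(D)$. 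I will also use that $R_0\mu\in L^1(D;m)$: since $R_0\mathbf 1_D(x)=E_x\zeta$ is bounded on the bounded domain $D$, symmetry of $(P_t)$ gives $\int_DR_0\mu\,dm=\int_D\beta\cdot R_0\mathbf 1_D\,dm<\infty$.

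For part (i), the remaining hypothesis of Theorem~\ref{th5.2} is (\ref{eq5.45}) for $\Phi(\varphi)$. I would invoke the standard $L^\infty$-bound for the Dirichlet heat kernel on a bounded domain, $\sup_{y\in D}p^D_t(x,y)\le Ce^{-\lambda_1 t}$, which immediately yields $P_t|\Phi(\varphi)|(x)\to0$ uniformly in $x$. Theorem~\ref{th5.2} then gives $w(T,x)\to\tilde w(x)$ q.e.\ as $T\to\infty$, and continuity of $\Phi^{-1}$ converts this into $u(T,x)\to v(x)$ q.e.

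For part (ii) my strategy is to dominate $u(T,\cdot)$ by the solution of the linearized problem and apply Vitali's theorem. Assuming $\varphi\ge0$, the representation (\ref{eq4.1}) for $\bar w$ (with nonnegative boundary datum $\Phi(\varphi)$ and nonnegative integrand $H$) gives $w\ge0$ q.e., hence $u=\Phi^{-1}(w)\ge0$. The sign condition (\ref{eq6.4}) then forces $h(u)|\nabla u|^2\ge0$, so dropping this nonnegative term in (\ref{eq6.15}) for $\bar u$ yields
\[
0\le u(T,x)\le U(T,x):=E_x\bigl[\varphi(X_{T\wedge\zeta})+A^{\mu}_{T\wedge\zeta}\bigr].
\]
An application of the strong Markov property at time $T$ on $\{\zeta>T\}$ rewrites this as $U(T,x)=P_T\varphi(x)+R_0\mu(x)-P_T(R_0\mu)(x)$. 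Since $\varphi,R_0\mu\in L^1(D;m)$ and $P_T\mathbf 1_D(x)=P_x(\zeta>T)\to 0$ pointwise, dominated convergence gives $\|U(T,\cdot)-R_0\mu\|_{L^1(D;m)}\to0$, so the family $\{U(T,\cdot)\}_{T>0}$, and hence the dominated family $\{u(T,\cdot)\}_{T>0}$, is uniformly integrable in $L^1(D;m)$. Combined with the a.e.\ convergence from part (i), Vitali's theorem then yields $u(T,\cdot)\to v$ in $L^1(D;m)$.

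The chief obstacle is that equations (\ref{eq6.1}) and (\ref{eq6.02}) involve the nonlinear gradient absorption $h(u)|\nabla u|^2$, which is not covered directly by Sections~\ref{sec3}--\ref{sec5}. Passing through the Cole--Hopf type transformation $w=\Phi(u)$ of Proposition~\ref{prop6.2} is what makes both the convergence argument in (i) and the probabilistic comparison in (ii) available; without the sign condition (\ref{eq6.4}), $H$ would fail to be monotone and the pointwise bound $u\le U$ would not follow from (\ref{eq6.15}).
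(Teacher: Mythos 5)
Your proposal is correct and follows essentially the same route as the paper: transform via $\Phi$ using Proposition \ref{prop6.2}, verify (E1)--(E6) for $f=0$, $g=H$, apply Theorem \ref{th5.2} for (i), and for (ii) dominate $u(T,\cdot)$ by the solution of the linear problem ($h\equiv 0$) to get uniform integrability and conclude by Vitali. The only (immaterial) difference is that the paper deduces $L^1$-convergence of the dominating linear solution from the quantitative bound (\ref{eq5.5}) with rate $Ct^{-d/2}$, whereas you obtain it from the identity $U(T,\cdot)=P_T\varphi+R_0\mu-P_T(R_0\mu)$ and dominated convergence; both are valid.
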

\begin{proof}
In the proof we adopt the notation from the proof of  Proposition \ref{prop6.2}.
We know that $w=\Phi(u)$ is nonnegative and solves  (\ref{eq6.1}) with $H$ replaced by $\hat H$. We also know that the initial condition $\Phi\circ\varphi$ and coefficients $f=0, g:=\hat H$ of that equation satisfy the assumptions (E1)--(E6). Moreover, we shall see in the
proof of Proposition \ref{prop6.1} (in a more general situation
where $\Delta$ is replaced by the fractional Laplacian
$\Delta^{\alpha/2}$) that (\ref{eq5.45}) with $\varphi$ replaced by $\Phi\circ\varphi$ is satisfied. Hence, by Theorem \ref{th5.2}, $w(t,x)\rightarrow\tilde w(x)$ as
$t\rightarrow\infty$ for q.e. $x\in D$. Therefore part (i) follows
from Proposition \ref{prop6.2} and the fact that $\Phi^{-1}$ is
continuous. To prove part (ii), we first note that for every $T>0$, $\bar w\ge0$ q.e. on $D_{0,T}$, so $\bar u\ge0$ q.e. on $D_{0,T}$. Consequently, $h(\bar u)\ge0$ q.e. on $D_{0,T}$ since $h$ satisfies (\ref{eq6.4}). Therefore from (\ref{eq6.15}) it follows that for
q.e. $(s,x)\in D_{0,T}$,
\[
\bar u(s,x)\le E_{s,x}\left(\varphi(\BBX_{\zeta_{\tau}})
+\int^{\zeta_{\tau}}_0dA^{\mu}_t\right)=:\bar{\hat u}(s,x).
\]
The function ${\hat u}$ defined as $\hat u(t,x)=\bar{\hat
u}(T-t,x)$, $(t,x)\in D_T$, is a solution of (\ref{eq6.1}) with
$h\equiv0$. By Theorem \ref{th5.2}, $\hat u(t,x)\rightarrow\hat
v(x)$ as $t\rightarrow\infty$ for q.e. $x\in D$, where $\hat v$ is
a solution of (\ref{eq6.02}) with $h\equiv0$. In fact, by
(\ref{eq5.5}) (see the proof of Proposition \ref{prop6.1} for
details),
\[
|\hat u(t,x)-\hat v(x)|\le Ct^{-d/2}(\|\varphi\|_{L^1(D;m)}
+\|\beta\|_{L^1(D;m)}),\quad t>0,
\]
for q.e. $x\in D$. Since $D$ is bounded, it follows that
$\hat u(t,\cdot)\rightarrow\hat v$ in
$L^1(D;m)$ as $t\rightarrow\infty$. From this and the fact that
$0\le u(t,\cdot)\le\hat u(t,\cdot)$ we conclude that the family
$\{u(t,\cdot)\}$ is uniformly integrable, which together with (i)
proves  (ii).
\end{proof}
\medskip

By using a completely different method, part (ii) of the above
proposition was proved in \cite[Theorem 3.3]{LP} under the
assumption that $h\in C^1(\BR)$ and $h'(s)>0$ for $s\in\BR$.

\paragraph{Equations with Neumann boundary conditions}

Let $D$ be a bounded Lipschitz domain in $\BR^d$, $d\ge3$. Set
$E=\bar D$. Let $H=L^2(\bar D;m)$, where $m$ is the Lebesgue
measure on $\bar D$, and let $V=H^1(D)$. It is known that $(B,V)$
defined by (\ref{eq6.16}) is a regular Dirichlet form on $H$ (see
\cite[Example 4.5.3]{FOT}). The operator $L$ associated with
$(B,V)$ in the sense of (\ref{eq2.14}) is $\frac12\Delta$ with the
Neumann boundary condition, while the process $\BM^{(0)}=(X,P_x)$
(with life time $\zeta=\infty$) associated with $(B,V)$ in the
resolvent sense is the reflecting Brownian motion on $\bar D$ (see
\cite[Example 4.5.3]{FOT}).

Let $\tilde\nu$ denote the surface measure on $\partial D$. Then  for
$\tilde\nu$-a.e. $x\in\partial D$ there exists a unit inward normal
vector $\mathbf{n}(x)=(\mathbf{n}_1(x),\dots,\mathbf{n}_d(x))$
(see \cite[Example 5.2.2]{FOT}). We consider the Neumann problems
\begin{equation}
\label{eq6.01}
\partial_su-\frac12\Delta u+\lambda u=f(\cdot,u), \qquad
\frac{\partial u}{\partial\mathbf{n}}\Bigl|_{(0,\infty)\times\partial
D} =g(x,u),\qquad u(0,\cdot)=\varphi
\end{equation}
and
\begin{equation}
\label{eq6.17} -\frac12\Delta v+\lambda v=f(\cdot,v),\qquad
\frac{\partial v}{\partial\mathbf{n}}\Bigl|_{\partial D}=g(\cdot,v),
\end{equation}
where $\frac{\partial u}{\partial\mathbf{n}}
=\sum^d_{i=1}\mathbf{n}_i\frac{\partial u}{\partial x_i}$\,. It is
known (see \cite[Example 5.2.2]{FOT}) that for every $x\in\bar D$
the process $X$ has under $P_x$ the representation
\begin{equation}
\label{eq6.18}
X^{i}_t=X^{i}_0+B^i_t+\frac12\int^t_0\mathbf{n}_i(X_s)\,dl_s,\quad
t\ge0, \quad P_x\mbox{-a.s.},
\end{equation}
where $B=(B^1,\dots,B^d)$ is a  $d$-dimensional standard Brownian
motion and $l$ is the local time of $X$ on the boundary $\partial
D$. Let $S_{00}(\bar D)$ denote the set of all finite positive Borel
measures $\gamma$ on $\bar D$ of finite energy integrals  and such that
$\|U_1\gamma\|<\infty$, where $U_1\gamma$ is the 1-potential of $\gamma$.
It is also known (see \cite[Example 5.2.2]{FOT}) that $\tilde\nu\in S_{00}(\bar D)$
and
\begin{equation}
\label{eq6.19} A^{0,\tilde\nu}=l.
\end{equation}
By (\ref{eq6.18}) and (\ref{eq6.19}), the probabilistic solution of
(\ref{eq6.01}) (see \cite[Section 4]{PZ}) coincides with the
probabilistic solution of
\begin{equation}
\label{eq6.5}
\partial_t u-Lu+\lambda u=f(x,u)
+g(x,u)\cdot\nu,\quad u(0,\cdot)=\varphi
\end{equation}
with $\nu=dt\otimes\tilde\nu$, and the probabilistic solution of  (\ref{eq6.17}) (see
\cite[Section 5]{PZ}) coincides with the probabilistic solution of
\begin{equation}
\label{eq6.6} -Lw+\lambda w=f(x,w) +g(x,w)\cdot\tilde\nu.
\end{equation}

\begin{proposition}
\label{prop6.4} Let $\varphi,f,g$ satisfy the assumptions of
Theorem \ref{th4.3}, and moreover, $f(\cdot,0)\in L^1(D;m)$,
$g(\cdot,0)\in L^{\infty}(D;m)$. Let $u$ be a solution of
\mbox{\rm(\ref{eq6.5})} and $v$ be a solution of
\mbox{\rm(\ref{eq6.6})}. Then for every $\lambda>0$ there is $C>0$
depending only on $d$  such that for q.e. $x\in\bar D$,
\begin{align*}
|u(t,x)-v(x)|&\le Ce^{-\lambda t}t^{-d/2}
\Big(\|\varphi\|_{L^1(D;m)}+\lambda^{-1}\|f(\cdot,0)\|_{L^1(D;m)}\nonumber\\
&\qquad\qquad\qquad\quad
+(1\vee\lambda^{-1})m(D)\|g(\cdot,0)\|_{\infty}
\|R^0_1\tilde\nu\|_{\infty}\Big),\quad t>0.
\end{align*}
\end{proposition}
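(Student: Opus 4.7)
The plan is to apply Corollary~\ref{cor5.2} to the reflecting Brownian motion $\BM^{(0)}=(X,P_x)$ on $\bar D$ associated with the Neumann form $(B,V)$ with $V=H^1(D)$, taking $\tilde\mu=\nu$, and then to estimate each of the three resulting terms using the heat kernel $p^0_t(x,y)$ of $\BM^{(0)}$. First I would verify the hypotheses of Corollary~\ref{cor5.2}: the surface measure $\nu$ lies in $S_{00}(\bar D)\subset\RR^+(\bar D)$ because $\|R^0_1\nu\|_\infty<\infty$ for bounded Lipschitz $D$; conditions (E1)--(E6) are inherited from the hypotheses of Theorem~\ref{th4.3} together with the assumptions $f(\cdot,0)\in L^1(D;m)$, $g(\cdot,0)\in L^\infty(D;m)$ and $\|R^0_1\nu\|_\infty<\infty$; and condition (\ref{eq5.45}) for the killed semigroup $P^\lambda_t=e^{-\lambda t}P^0_t$ holds because $\|P^0_t\varphi\|_\infty$ stays bounded while $e^{-\lambda t}\to 0$. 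Corollary~\ref{cor5.2} then yields, for q.e.\ $x\in\bar D$ and every $T>0$,
\[
|u(T,x)-v(x)|\le 3e^{-\lambda T}\big(P^0_T|\varphi|(x)+P^0_TR^0_\lambda|f(\cdot,0)|(x)+P^0_TR^0_\lambda(|g(\cdot,0)|\cdot\nu)(x)\big).
\]

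The key analytic input is the classical Gaussian upper bound $p^0_t(x,y)\le C_d\,t^{-d/2}$ for the reflecting Brownian motion heat kernel on a bounded Lipschitz domain, combined with the elementary inequality $e^{-\lambda t}\le(d/(2e\lambda))^{d/2}t^{-d/2}$ used to bridge short- and long-time regimes. For the first term I would use $P^0_T|\varphi|(x)\le\|p^0_T(x,\cdot)\|_\infty\|\varphi\|_{L^1(D;m)}$, which after multiplication by $e^{-\lambda T}$ produces the $\|\varphi\|_{L^1(D;m)}$-contribution. For the second term, the Markov property together with Fubini gives
\[
P^0_TR^0_\lambda|f(\cdot,0)|(x)=\int_0^\infty e^{-\lambda s}P^0_{T+s}|f(\cdot,0)|(x)\,ds,
\]
and substituting the heat kernel bound, combined with the estimate $\int_0^\infty e^{-\lambda s}(T+s)^{-d/2}\,ds\le \lambda^{-1}T^{-d/2}$, yields the $\lambda^{-1}\|f(\cdot,0)\|_{L^1(D;m)}$-contribution. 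For the third term I would use $|g(\cdot,0)|\le\|g(\cdot,0)\|_\infty$ to reduce to $P^0_TR^0_\lambda\nu(x)$, then compare $R^0_\lambda\nu$ with $R^0_1\nu$ by partitioning $\int_0^\infty e^{-\lambda s}\,dA^\nu_s$ into unit intervals and applying the Markov property at integer times (using $1-e^{-\lambda}\ge \lambda/2$ for $\lambda\in(0,1]$) to obtain $\|R^0_\lambda\nu\|_\infty\le c_0(1\vee\lambda^{-1})\|R^0_1\nu\|_\infty$. The crude bound $P^0_Th(x)\le m(D)\|p^0_T(x,\cdot)\|_\infty\|h\|_\infty$ applied to $h=R^0_\lambda\nu$ then delivers, after multiplication by $e^{-\lambda T}$, the $m(D)(1\vee\lambda^{-1})\|g(\cdot,0)\|_\infty\|R^0_1\nu\|_\infty$-contribution.

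The main obstacle I anticipate is the resolvent comparison $\|R^0_\lambda\nu\|_\infty\le c_0(1\vee\lambda^{-1})\|R^0_1\nu\|_\infty$ with the correct $\lambda^{-1}$ blow-up as $\lambda\to 0^+$: since reflecting Brownian motion is conservative, the unkilled potential $R^0_0\nu$ is generally infinite, so this blow-up is genuine and must emerge cleanly from the Markov partition argument rather than from resolvent monotonicity alone. A secondary subtle point is carefully combining the short-time Gaussian heat kernel bound with the long-time behaviour through the exponential factor $e^{-\lambda t}$, so that the three term-by-term estimates collapse into the single uniform expression $Ce^{-\lambda t}t^{-d/2}$ asserted in the proposition.
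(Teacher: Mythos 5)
Your proposal is correct and follows essentially the same route as the paper: apply Corollary \ref{cor5.2} and then control the three resulting terms by the Bass--Hsu bound $\sup_{x\in\bar D}P^0_t|\psi|(x)\le Ct^{-d/2}\|\psi\|_{L^1(D;m)}$. The only (immaterial) differences are in the elementary resolvent estimates: the paper obtains the $\lambda^{-1}\|f(\cdot,0)\|_{L^1(D;m)}$ term from the identity $\|R^0_\lambda f(\cdot,0)\|_{L^1(D;m)}=(|f(\cdot,0)|,R^0_\lambda 1)=\lambda^{-1}\|f(\cdot,0)\|_{L^1(D;m)}$ and compares $R^0_\lambda\nu$ with $R^0_1\nu$ via the resolvent equation $R^0_\lambda\nu=R^0_1\nu+(1-\lambda)R^0_\lambda(R^0_1\nu)$, whereas you use Fubini over time and a Markov-property partition into unit intervals, which yield the same $(1\vee\lambda^{-1})$ bound.
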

\begin{proof}
By \cite[Theorem 3.1]{BH} (see also \cite[Lemma 4.3]{BH}), there is
$C>0$ depending only on $d$ such that for every $\psi\in
L^1(D;m)$,
\[
\sup_{x\in\bar D}P^0_t|\psi|(x)\le
Ct^{-d/2}\|\psi\|_{L^1(D;m)},\quad t>0.
\]
Moreover,
\[
\|R^0_{\lambda}f(\cdot,0)\|_{L^1(D;m)}=(f(\cdot,0),R^0_{\lambda}1)
=\lambda^{-1}\|f(\cdot,0)\|_{L^1(D;m)}.
\]
Since $\tilde\nu\in S_{00}(\bar D)$,  $\|R^0_1\tilde\nu\|_{\infty}<\infty$. By the
resolvent equation (see \cite[Lemma 5.1.5]{FOT}),
\[
R^0_{\lambda}\tilde\nu=R^0_1\tilde\nu +(1-\lambda)R^0_{\lambda}(R^0_1\tilde\nu).
\]
Hence $\|R^0_{\lambda}(g(\cdot,0)\cdot\tilde\nu)\|_{L^1(D;m)}
\le\|g(\cdot,0)\|_{\infty}\|R^0_1\tilde\nu\|_{L^1(D;m)} \le m(D)
\|g(\cdot,0)\|_{\infty}\|R^0_1\tilde\nu\|_{\infty}$ if $\lambda\ge1$ and
\begin{align*}
\|R^0_{\lambda}(g(\cdot,0)\tilde\nu)\|_{L^1(D;m)}&\le
m(D)\|g(\cdot,0)\|_{\infty}(\|R^0_1\tilde\nu\|_{\infty}
+(1-\lambda)\lambda^{-1}\|R^0_1\tilde\nu\|_{\infty})\\
&=\lambda^{-1}m(D)\|g(\cdot,0)\|_{\infty}\|R^0_1\tilde\nu\|_{\infty}
\end{align*}
if $\lambda<1$. The proposition follows immediately from the above
estimates and Corollary \ref{cor5.2}.
\end{proof}

\subsection{Nonlocal Dirichlet forms}
\label{sec6.2}

Let $E=\BR^d$ with $d\ge2$, $m$ be the Lebesgue measure on $E$ and
$\alpha\in(0,2)$. We consider the form
\begin{equation}
\label{eq6.23}
B(u,v)=\int_{\BR^d}\hat{u}(x)\hat{v}(x)|x|^{\alpha}\,dx,\quad
u,v\in V,
\end{equation}
where $\hat u$ denotes the Fourier transform of $u$ and
\[
V=\left\{u\in L^{2}(\BR^d;m):
\int_{\BR^d}|\hat{u}(x)|^{2}\,|x|^{\alpha}\,dx<\infty\right\}.
\]
It is known  that $(B,V)$ is a regular Dirichlet form on
$L^2(\BR^d;m)$ (see \cite[Example 1.4.1]{FOT}).  The operator $L$
associated with $(B,V)$ is the fractional Laplacian
$\Delta^{\alpha/2}$ and the Markov process $\BM^{(0)}=(X,P_x)$
(with life time $\zeta=\infty$) associated with $(B,V)$ is a
symmetric stable process of index $\alpha$.

Let $D\subset\BR^d$, $d\ge2$, be a nonempty open bounded connected
set. Set $L^2_D(\BR^d;m)=\{u\in L^2(\BR^d;m):u=0\mbox{ a.e. on
}D^c\}$, $V_D=\{u\in D(B):\tilde u=0\mbox{ q.e. on }D^c\}$, where
$\tilde u$ is a quasi-continuous version of $u$. By \cite[Theorem
4.4.3]{FOT}, the form $(B,V_D)$ is a regular Dirichlet form on
$L^2_D(\BR^d;m)$, and by \cite[Theorem 4.4.4]{FOT}, if $(B,V)$ is
transient, then $(B,V_D)$ is transient, too.


\begin{proposition}
\label{prop6.1} Let $\varphi,f,g$ satisfy the assumptions of
Theorem \ref{th4.3}, and moreover, $f(\cdot,0)\in L^1(D;m)$,
$g(\cdot,0)\cdot\tilde\mu\in\MM_{0,b}(D)$. Let $u$ be a solution
of \mbox{\rm(\ref{eq5.37})} and $v$ be a solution of
\mbox{\rm(\ref{eq5.38})}. Then there exists $C>0$ depending only
on $d,\alpha$ such that q.e. $x\in D$,
\begin{align}
\label{eq6.13} |u(t,x)-v(x)|&\le Ct^{-d/\alpha}
\left(\|\varphi\|_{L^1(D;m)}+(m(D))^{\alpha/d}\|f(\cdot,0)\|_{L^1(D;m)}\right.
\nonumber\\
&\qquad\qquad\quad
\left.+(m(D))^{\alpha/d}(|g(\cdot,0)|\cdot\tilde\mu)(D)\right),\quad t>0.
\end{align}
\end{proposition}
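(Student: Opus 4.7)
The plan is to apply Theorem \ref{th5.2}---specifically the explicit bound \mbox{\rm(\ref{eq5.5})}---to the killed symmetric $\alpha$-stable process $\BM^{(0)}_D$, and then control the two terms on the right-hand side by classical heat-kernel and Green-function estimates for this process. The hypotheses of Theorem \ref{th5.2} are easily checked: since $D$ is bounded, $(B,V_D)$ is transient, so by \cite[Proposition 3.2]{KR:CM} we have $\MM_{0,b}(D)\subset\RR(D)$, whence $f(\cdot,0)\cdot m,\,g(\cdot,0)\cdot\tilde\mu\in\RR(D)$; condition \mbox{\rm(\ref{eq5.45})} will be immediate once the heat-kernel bound below is established.

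The two inputs to be invoked are classical. First, by domination of the killed transition density by the free stable density $p$ together with the self-similar scaling $p(t,x,y)=t^{-d/\alpha}p(1,t^{-1/\alpha}(x-y),0)$, one obtains $p^D(t,x,y)\le Ct^{-d/\alpha}$ with $C=C(d,\alpha)$, and therefore
\[
P^D_t\psi(x)\le Ct^{-d/\alpha}\|\psi\|_{L^1(D;m)},\qquad\psi\in L^1(D;m).
\]
Second, the classical Green-function bound $G_D(x,y)\le c|x-y|^{\alpha-d}$ (valid since $d\ge 2>\alpha$), combined with the symmetric rearrangement inequality $\int_D|x-y|^{\alpha-d}\,dy\le c(m(D))^{\alpha/d}$, yields $\sup_{x\in D}R_01(x)\le c(m(D))^{\alpha/d}$. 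By Fubini and symmetry of $G_D$, this gives
\[
\|R_0\eta\|_{L^1(D;m)}=\int_D R_01(y)\,\eta(dy)\le c(m(D))^{\alpha/d}\eta(D)
\]
for every positive bounded Borel measure $\eta$ on $D$.

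Combining, $P^D_t|\varphi|(x)\le Ct^{-d/\alpha}\|\varphi\|_{L^1(D;m)}$ and, for $\eta\in\MM^+_{0,b}(D)$,
\[
P^D_t(R_0\eta)(x)\le Ct^{-d/\alpha}\|R_0\eta\|_{L^1(D;m)}\le Ct^{-d/\alpha}(m(D))^{\alpha/d}\eta(D).
\]
Applying the latter estimate with $\eta=|f(\cdot,0)|\cdot m$ (total mass $\|f(\cdot,0)\|_{L^1(D;m)}$) and with $\eta=|g(\cdot,0)|\cdot\tilde\mu$, and substituting into \mbox{\rm(\ref{eq5.5})}, delivers \mbox{\rm(\ref{eq6.13})}. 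There is no serious obstacle here: once Theorem \ref{th5.2} is in hand, the argument reduces to a bookkeeping combination of the two standard kernel bounds for the killed $\alpha$-stable process.
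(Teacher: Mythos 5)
Your proposal is correct and follows essentially the same route as the paper: bound $P^D_t\psi$ by $Ct^{-d/\alpha}\|\psi\|_{L^1(D;m)}$ via the scaling of the free stable density, bound $\|R_0\eta\|_{L^1(D;m)}=\int_D R_01\,d\eta$ by $\sup_{x\in D}R_01(x)\cdot\eta(D)$, and substitute into \mbox{\rm(\ref{eq5.5})}. The only (harmless) divergence is in the sub-step $\sup_{x\in D}R_01(x)=\sup_{x\in D}E_x\zeta_D\le c(m(D))^{\alpha/d}$, which you obtain from the Riesz-kernel bound on the Green function plus a rearrangement inequality, whereas the paper cites Chung's Greenian bound, itself derived from the same on-diagonal heat-kernel estimate.
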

\begin{proof}
Let $\BM^{(0)}_D$ denote the part of the process $\BM^{(0)}$ on
$D$ (see \cite[Section 4.4]{FOT}), $\zeta_D$ denote the life time
of  $\BM^{(0)}_D$ and let $(P^0_t)$, $(R^0_{\alpha})$ denote the
semigroup and the resolvent associated with $\BM^{(0)}_D$. We denote by  $p$
the transition density of the process $\BM^{(0)}$.
From the fact that $p(t,x,y)=p(t,0,x-y)$ and the scaling property
$p(t,0,x)=t^{-d/\alpha}p(1,0,t^{-1/\alpha}x)$ it follows that
\begin{equation}
\label{eq6.8} p(t,x,y)\le Ct^{-d/\alpha},\quad t>0
\end{equation}
with $C=\sup_{x\in\BR^d}p(1,0,x)$. Hence
\begin{equation}
\label{eq6.9} P^0_t\varphi(x)\le
Ct^{-d/\alpha}\|\varphi\|_{L^1(D;m)},\quad t>0.
\end{equation}
By (\ref{eq6.8}) and \cite[Theorem 1]{C} (see also the proof of
\cite[Theorem 1.17]{CZ}),
\[
\sup_{x\in D}E_x\zeta^0_D\le c(m(D))^{\alpha/d}
\]
for some $c>0$ depending only on $\alpha,d$.  By (\ref{eq6.9}),
\[
P^0_t(R^0_{0}(|g(\cdot,0)|\cdot\tilde\mu))(x)\le
Ct^{-d/\alpha}\|R^0_{0}(|g(\cdot,0)|\cdot\tilde\mu)\|_{L^1(D;m)}.
\]
Since
\begin{align*}
\|R^0_{0}(|g(\cdot,0)|\cdot\tilde\mu)\|_{L^1(D;m)}&
=\int_DR^0_01(x)|g(x,0)|\,\tilde\mu(dx)\\
&=\int_DE_x\zeta^0_D|g(x,0)|\,\tilde\mu(dx) \le
c(m(D))^{\alpha/d}(|g(\cdot,0)|\cdot\tilde\mu)(D),
\end{align*}
we have
\begin{equation}
\label{eq6.11} P^0_t(R^0_{0}(|g(\cdot,0)|\cdot\tilde\mu))(x)\le
c(\alpha,d)(m(D))^{\alpha/d}t^{-d/\alpha}
(|g(\cdot,0)|\cdot\tilde\mu)(D).
\end{equation}
Putting $g=1$ and $\mu=f(\cdot,0)\cdot m$ in the above estimate we
get
\begin{equation}
\label{eq6.12} P^0_t(R^0_{0}|f(\cdot,0)|)(x)\le
c(\alpha,d)(m(D))^{\alpha/d}t^{-d/\alpha}\|f(\cdot,0)\|_{L^1(D;m)}.
\end{equation}
Substituting (\ref{eq6.9})--(\ref{eq6.12}) into (\ref{eq5.5}) we
get the desired estimate.
\end{proof}

Assume additionally that $D$ has a $C^{1,1}$ boundary and
$d\ge3$. Then, by  \cite[Proposition 4.9]{Ku}, there exist constants
$0<c_1<c_2$ depending only on $d,\alpha,D$ such that
\[
c_1\delta^{\alpha/2}(x)\le R^0_01(x)\le
c_2\delta^{\alpha/2}(x),\quad x\in D,
\]
where $\delta(x)=\mbox{dist}(x,\partial D)$. It follows that if
\begin{equation}
\label{eq6.14}
\int_D\delta^{\alpha/2}(x)|g(\cdot,0)|\,\tilde\mu(dx)=: K<\infty,
\end{equation}
then (\ref{eq6.12}) holds with $|\mu(D)|$ replaced by $K$.
Therefore under the above assumptions on $D$ the proof of
Proposition \ref{prop6.1} shows the following proposition.

\begin{proposition}
Let the assumptions of Proposition \ref{prop4.1} hold, and
moreover $f\in L^1(D;m)$, $|g(\cdot,0)|\cdot\tilde\mu$ satisfies
\mbox{\rm(\ref{eq6.14})}. Then \mbox{\rm(\ref{eq6.13})} holds true
with $(m(D))^{\alpha/d}(|g(\cdot,0)|\cdot\mu)(D)$ replaced by $K$.
\end{proposition}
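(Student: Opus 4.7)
The plan is to retrace the proof of Proposition~\ref{prop6.1} and upgrade only the single estimate that governs the contribution of $|g(\cdot,0)|\cdot\tilde\mu$. The estimate (\ref{eq6.9}) bounding $P^0_t|\varphi|(x)$ and the estimate (\ref{eq6.12}) bounding $P^0_t(R^0_0|f(\cdot,0)|)(x)$ are insensitive to the stronger geometric hypothesis on $D$ and remain valid since $\varphi\in L^1(D;m)$ and $f(\cdot,0)\in L^1(D;m)$. They will therefore reproduce verbatim the two terms involving $\|\varphi\|_{L^1(D;m)}$ and $\|f(\cdot,0)\|_{L^1(D;m)}$ in (\ref{eq6.13}).

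For the $g$-term, the first step is to record the identity already used in Proposition~\ref{prop6.1},
\[
\|R^0_0(|g(\cdot,0)|\cdot\tilde\mu)\|_{L^1(D;m)}
=\int_D R^0_0 1(x)\,|g(x,0)|\,\tilde\mu(dx),
\]
which follows from Fubini's theorem together with the symmetry of $R^0_0$ with respect to $m$ (the underlying form (\ref{eq6.23}) is symmetric, so the Green function is symmetric). The next step is to invoke the boundary-decay bound $R^0_0 1(x)\le c_2\delta^{\alpha/2}(x)$ from \cite[Proposition~4.9]{Ku}, which is available precisely because $\partial D\in C^{1,1}$ and $d\ge 3$. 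Together with hypothesis (\ref{eq6.14}) this gives the refined estimate
\[
\|R^0_0(|g(\cdot,0)|\cdot\tilde\mu)\|_{L^1(D;m)}
\le c_2\int_D\delta^{\alpha/2}(x)\,|g(x,0)|\,\tilde\mu(dx)=c_2 K,
\]
which is finite precisely thanks to (\ref{eq6.14}).

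To conclude, I would plug the last inequality into (\ref{eq6.9}) applied with $\psi=R^0_0(|g(\cdot,0)|\cdot\tilde\mu)$, obtaining
\[
P^0_t\big(R^0_0(|g(\cdot,0)|\cdot\tilde\mu)\big)(x)\le Cc_2\,t^{-d/\alpha}K,
\]
which replaces (\ref{eq6.11}) in the chain leading to Proposition~\ref{prop6.1}. Substituting this together with the unchanged bounds on the $\varphi$- and $f$-terms into the master estimate (\ref{eq5.5}) of Theorem~\ref{th5.2} yields (\ref{eq6.13}) with $(m(D))^{\alpha/d}(|g(\cdot,0)|\cdot\tilde\mu)(D)$ replaced by $K$, after absorbing $c_2$ into the overall constant. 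I foresee no serious obstacle: the entire argument reuses the skeleton of the proof of Proposition~\ref{prop6.1}, and the only new input is Kulczycki's sharp boundary estimate, which has the crucial virtue of producing a bound that remains integrable against $|g(\cdot,0)|\cdot\tilde\mu$ even when this measure fails to be finite on $D$, so long as (\ref{eq6.14}) is satisfied.
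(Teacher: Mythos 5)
Your proposal is correct and follows essentially the same route as the paper: the only new ingredient is Kulczycki's two-sided bound $c_1\delta^{\alpha/2}(x)\le R^0_01(x)\le c_2\delta^{\alpha/2}(x)$, inserted into the identity $\|R^0_0(|g(\cdot,0)|\cdot\tilde\mu)\|_{L^1(D;m)}=\int_D R^0_01(x)|g(x,0)|\,\tilde\mu(dx)$ already present in the proof of Proposition \ref{prop6.1}, after which the rest of that proof is reused unchanged. The only cosmetic difference is that you justify the duality identity via symmetry of the Green function, whereas the paper simply bounds $R^0_01(x)=E_x\zeta^0_D$ directly; both are fine.
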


\begin{remark}
(i) An analogue of Proposition \ref{prop6.1} holds true for $D$ as
before and the  form (\ref{eq6.23}) replaced by any regular transient
symmetric Dirichlet form $(B,V)$ on $L^2(\BR^d;dx)$ whose
semigroup possesses a kernel $p$ satisfying uniform estimate of
the form (\ref{eq6.8}) with $\alpha/d$ replaced by $\kappa$, i.e.
\begin{equation}
\label{eq6.25} p(t,x,y)\le Ct^{-\kappa},\quad t>0,
\end{equation}
for some $C,\kappa>0$. Indeed, an inspection of the proof of
Proposition \ref{prop6.1} shows that for such a form estimate
(\ref{eq6.13}) holds with $\alpha/d$ replaced by $\kappa$. A
characterization of symmetric Dirichlet forms satisfying
(\ref{eq6.25}) in terms of Dirichlet form inequalities of Nash's
type is given in \cite{CKS}. For a concrete example of a class of
forms satisfying (\ref{eq6.25}) and containing the form
(\ref{eq6.23})  as a special case see \cite[Remark 2.15]{CKS}. For
similar examples
see \cite[Examples 6.7.14, 6.7.16]{CF}.
\end{remark}

\subsection{Local semi-Dirichlet forms}

Let $D\subset\BR^d$, $m$, $H$ be as in Section \ref{sec6.1},  and
let $a:D\rightarrow\BR^d\otimes\BR^d$, $b:D\rightarrow\BR^d$ be
measurable functions such that for every $x\in D$,
\[
\lambda^{-1}|\xi|^2\le\sum^d_{i,j=1}a_{ij}(x)\xi_i\xi_j\le\lambda|\xi|^2,
\quad a_{ij}(x)=a_{ji}(x),\quad \sum^d_{i=1}|b_i(x)|^2\le\lambda
\]
for some $\lambda\ge1$. Set $V=H^1_0(D)$ and
\[
B(\varphi,\psi)=\sum^d_{i,j=1}\int_Da_{ij}(x)
\frac{\partial\varphi}{\partial x_i} \frac{\partial\psi}{\partial
x_i}\,dx +\sum^d_{i=1}\int_Db_i(x)\frac{\partial\varphi}{\partial
x_i}\psi(x)\,dx,\quad \varphi,\psi\in V.
\]
Of course, the operator $L$ determined by $(B,V)$ has the form
\begin{equation}
\label{eq6.22} L=\sum^d_{i,j=1}\frac{\partial}{\partial
x_i}\left(a_{ij}(x) \frac{\partial}{\partial x_j}\right)
+\sum^d_{i=1}b_i(x)\frac{\partial}{\partial x_i}\,.
\end{equation}
By \cite[Theorem 1.5.3]{O2}, $(B,V)$ is a regular lower bounded
semi-Dirichlet form on $H$. Let $G_D$ denote the Green function for
$L$ on $D$ and $\bar G_D$ denote the Green function on $D$ for
the Laplace operator $\Delta$.
From Aronson's estimates (see \cite{A}) it follows that there is $c>0$
such that $G_D\le c\bar G_D$. Hence, if $\mu\in\MM^+_{0,b}(D)$, then
\begin{align*}
(R_0\mu,1)=\int_D\left(\int_DG_D(x,y)\,\mu(dy)\right)dx&\le c
\int_D\left(\int_D\bar G_D(y,x)\,dx\right)\mu(dy)\\
&\le c\|\bar G_D1\|_{\infty}\mu(D),
\end{align*}
which is bounded because $\|\bar G_D1\|_{\infty}\le c'(m(D))^{2/d}$ for some $c'>0$  (see, e.g., \cite[Theorem 1]{C}). This shows that $\MM^+_{0,b}(D)\subset\RR^+(E)$.
It is  well known (see, e.g., \cite{A}) that the transition
density $p$ of the process associated with $(B,V)$ has the
property that $p(t,x,y)\le Ct^{-d/2}$, $t>0$, for some $C>0$, i.e.
(\ref{eq6.8}) with $\alpha=2$ is satisfied. Therefore there is an
analogue of Proposition \ref{prop6.1} for equations involving the
operator $L$ defined by (\ref{eq6.22}).

\vspace{2mm} \noindent{\bf\large Acknowledgements}
\smallskip\\
Research supported by Polish National Science Centre (grant no.
2012/07/B/ST1/03508).

\end{document}